\pgfplotsset{compat=1.14}
\theoremstyle{plain}
\newtheorem{theorem}{Theorem}[section]
\newtheorem{proposition}[theorem]{Proposition}
\newtheorem{lemma}[theorem]{Lemma}
\newtheorem{corollary}[theorem]{Corollary}
\theoremstyle{definition}
\newtheorem{definition}[theorem]{Definition}
\newtheorem{notation}[theorem]{Notation}
\newtheorem{remark}[theorem]{Remark}
\theoremstyle{remark}
\newtheorem{example}[theorem]{Example}
\newtheorem*{claim}{Claim}
\newtheorem{step}{Step}
\numberwithin{equation}{section}
\DeclareMathOperator{\spec}{Spec}
\DeclareMathOperator{\proj}{Proj}
\DeclareMathOperator{\git}{/\!\!/}
\DeclareMathOperator{\im}{Im}
\DeclareMathOperator{\Hom}{Hom}
\DeclareMathOperator{\Pic}{Pic}
\DeclareMathOperator{\Cl}{Cl}
\DeclareMathOperator{\codim}{codim}
\DeclareMathOperator{\Id}{Id}
\DeclareMathOperator{\weight}{weight}
\DeclareMathOperator{\rk}{rk}
\DeclareMathOperator{\Gr}{Gr}
\begin{document}

\title{Bott vanishing using GIT and quantization}
\author{Sebasti\'an Torres}

\address{Department of Mathematics\\University of Miami\\1365 Memorial Drive\\Ungar 503\\Coral Gables, FL 33146}
\email{sdtorres@miami.edu}

\begin{abstract}
A smooth projective variety $Y$ is said to satisfy Bott vanishing if $\Omega_Y^j\otimes L$ has no higher cohomology for every $j$ and every ample line bundle $L$. Few examples are known to satisfy this property. Among them are toric varieties, as well as the quintic del Pezzo surface, recently shown by Totaro. Here we present a new class of varieties satisfying Bott vanishing, namely stable GIT quotients of $(\mathbb{P}^1)^n$ by the action of $PGL_2$, over an algebraically closed field of characteristic zero. For this, we use the work done by Halpern-Leistner on the derived category of a GIT quotient, and his version of the quantization theorem. We also see that, using similar techniques, we can recover Bott vanishing for the toric case.
\end{abstract}

\maketitle 

\section{Introduction}\label{introduction section} 

We say that a smooth projective variety $Y$ satisfies Bott vanishing if for every ample line bundle $L$ we have
\begin{align} \label{vanishing}
H^i(Y,\Omega_Y^j \otimes L)=0 \quad \forall i>0, \forall j\geq0.
\end{align}

In \cite{totaro}, Totaro gives a geometric interpretation of what it means for a K3 surface to have this property. In general, it is not clear what the geometric meaning of Bott vanishing is, although it is certainly useful, when it holds, to compute sections of some vector bundles.  

This property turns out to be very restrictive. For instance, a Fano variety that satisfies Bott vanishing 
must be rigid, and even among rigid Fano varieties, the property is known to fail for quadrics of dimension at least $3$ and Grassmannians other than $\mathbb{P}^n$ (see the discussion in \cite{totaro} and the references therein). Smooth toric varieties are among the few known examples of varieties satisfying Bott vanishing. Several different proofs can be found in \cite{cox}, \cite{toric-buch}, \cite{toric-fujino}, \cite{toric-mustata}. In fact, vanishing (\ref{vanishing}) is shown for any projective toric variety, where $\Omega^j_Y$ is defined as the pushforward of $\Omega^j_{Y^0}$ from the smooth locus $Y^0$ (see e.g. \cite{toric-fujino}). Up until Totaro's paper \cite{totaro}, there were no known non-toric examples of rationally connected varieties with this property. He proves that the quintic del Pezzo surface over any field satisfies Bott vanishing, as well as coming up with several other non-toric examples from K3 surfaces. Namely, he proves that Bott vanishing fails for K3 surfaces of degree less than $20$ or equal to $22$, while it holds for all K3 surfaces of degree $20$ or at least $24$ with Picard number $1$.

The present work was motivated by \cite{totaro} and it continues the quest for non-toric examples of varieties satisfying Bott vanishing. Observe that the quintic del Pezzo surface can be realized as a GIT quotient of $(\mathbb{P}^1)^5$ by the diagonal action of $PGL_2$ with respect to the symmetric polarization $\mathcal{L}=\mathcal{O}(1,\ldots,1)$. We prove that in fact Bott vanishing holds for every stable GIT quotient $(\mathbb{P}^1)^n\git_\mathcal{L}PGL_2$, over an algebraically closed field of characteristic $0$. In particular, this gives one new Fano example for each even dimension.

\begin{theorem}\label{main theorem}
Let $PGL_2$ act diagonally on $(\mathbb{P}^1)^n$, over an algebraically closed field of characteristic zero. Suppose $\mathcal{L}$ is a $PGL_2$-linearized ample line bundle on $(\mathbb{P}^1)^n$ admitting no strictly semi-stable locus. Then the GIT quotient $Y=(\mathbb{P}^1)^n\git_\mathcal{L}PGL_2$ satisfies Bott vanishing.
\end{theorem}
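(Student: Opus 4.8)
The plan is to move from $Y$ to $X=(\mathbb{P}^1)^n$ using Halpern--Leistner's realization of $D^b(Y)$ as a window subcategory of $D^b_{PGL_2}(X)$ together with his quantization theorem, thereby reducing Bott vanishing on $Y$ to a computation of $PGL_2$-invariant cohomology of explicit equivariant sheaves on $(\mathbb{P}^1)^n$. \emph{Setting up.} Since $\mathcal L$ has no strictly semi-stable points, a stable configuration cannot have all of its weight supported at only two points of $\mathbb{P}^1$, so every stable configuration involves at least three distinct points; as $PGL_2$ acts sharply $3$-transitively on $\mathbb{P}^1$, the stabilizer of such a configuration is trivial. Hence $\pi\colon X^s\to Y$ is a $PGL_2$-torsor, $Y$ is smooth and projective of dimension $n-3$, and on $X^s$ we have the relative Euler sequence $0\to\pi^*\Omega_Y\to\Omega_{X^s}\to\mathcal O_{X^s}\otimes\mathfrak g^\vee\to 0$ with $\mathfrak g=\mathfrak{pgl}_2$ in its adjoint representation. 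Taking $j$-th exterior powers and using the Koszul resolution of the exterior power of a kernel, $\pi^*\Omega_Y^j$ is quasi-isomorphic, on $X^s$, to the complex whose degree-$p$ term is $\Omega_{X^s}^{\,j-p}\otimes S^p\mathfrak g^\vee$. Thus $\Omega_Y^j\otimes L$ is built, over the semistable locus, from the $PGL_2$-equivariant sheaves $\Omega_X^{\,j-p}\otimes S^p\mathfrak g^\vee\otimes\mathcal O_X(\vec d)$, where $\mathcal O_X(\vec d)$ is the unique $PGL_2$-linearized line bundle on $X$ whose restriction to $X^s$ descends to $L$; its class $\vec d$ lies in the ample cone of $Y$, which, by variation-of-GIT (Dolgachev--Hu, Hu--Keel), is the GIT chamber of $\mathcal L$ inside $\Pic^{PGL_2}(X)\otimes\mathbb{R}$.

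\emph{Reduction via windows and quantization.} By \cite{dhl}, for a suitable window $\mathcal G\subset D^b_{PGL_2}(X)$ — chosen so that its defining weight intervals contain $0$, so that $\mathcal O_X\in\mathcal G$ is the window lift of $\mathcal O_Y$, and so that the hypotheses of the quantization theorem are met — one has an equivalence $D^b(Y)\simeq\mathcal G$ for which
\[
R\Gamma\bigl(Y,\mathcal F\bigr)\;\cong\;R\Gamma\bigl(X,\widetilde{\mathcal F}\bigr)^{PGL_2},\qquad \mathcal F\in D^b(Y),
\]
where $\widetilde{\mathcal F}\in\mathcal G$ is the window lift of $\mathcal F$. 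For $\mathcal F=\Omega_Y^j\otimes L$, this lift is a bounded complex of equivariant sheaves $\mathcal O_X(\vec e)\otimes V$ with $\vec e$ constrained to the window and $V$ a representation of $PGL_2$, and for $L$ ample one can arrange it to sit in non-positive cohomological degrees. Since $\Omega_X^{\,m}$ is itself the direct sum of the line bundles $\mathcal O_X(\vec a)$ with $a_i\in\{0,-2\}$, Bott vanishing on $Y$ will follow once we show
\[
H^q\bigl(X,\,\mathcal O_X(\vec e)\otimes V\bigr)^{PGL_2}=0\qquad\text{for all }q>0,
\]
for every twist $\vec e$ admitted by the window and every $PGL_2$-representation $V$ occurring in $\widetilde{\Omega_Y^j\otimes L}$.

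\emph{Computation on $(\mathbb{P}^1)^n$.} By the Künneth formula $H^\bullet\bigl(X,\mathcal O_X(\vec e)\bigr)=\bigotimes_{i=1}^n H^\bullet\bigl(\mathbb{P}^1,\mathcal O(e_i)\bigr)$, where $H^1\bigl(\mathbb{P}^1,\mathcal O(e_i)\bigr)$ is nonzero precisely for $e_i\le-2$ and then equals $S^{-e_i-2}$ of the standard $SL_2$-representation, while for $e_i\ge 0$ one has $H^0\bigl(\mathbb{P}^1,\mathcal O(e_i)\bigr)=S^{e_i}$ of the standard representation. Hence every positive-degree summand of $H^q\bigl(X,\mathcal O_X(\vec e)\otimes V\bigr)$, after tensoring with $V$ (a sum of irreducibles, $S^p\mathfrak g^\vee$ being a symmetric power of the adjoint representation), is a tensor product of symmetric powers of the standard representation, whose $SL_2$-invariant part is governed by the classical polygon inequalities for invariants of binary forms (equivalently, by the nonemptiness criterion for GIT quotients of $(\mathbb{P}^1)^r$ by $SL_2$). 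The point is that the shape of the window — equivalently the shape of the GIT chamber of $\mathcal L$, which controls the admissible $\vec e$ — forces either enough of the $e_i$ to be $\ge-1$ to make some factor $H^1\bigl(\mathbb{P}^1,\mathcal O(e_i)\bigr)$ vanish outright, or else an imbalance among the $e_i$ violating the polygon inequalities and killing the invariants. In the symmetric case $\mathcal L=\mathcal O(1,\dots,1)$ one checks directly that every integral point of the chamber has all $e_i\ge 1$, so that $H^{>0}\bigl(X,\mathcal O_X(\vec e)\bigr)=0$ even before passing to invariants.

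\emph{The main difficulty.} The crux is exactly this last matching: to verify, for \emph{every} admissible twist $\vec e$ and \emph{every} representation $V$ appearing in the window lift, that the relevant $SL_2$-invariant cohomology group on $(\mathbb{P}^1)^n$ vanishes in positive degree — a combinatorial and invariant-theoretic assertion into which the precise geometry of the Kempf--Ness strata of $\bigl((\mathbb{P}^1)^n,PGL_2\bigr)$ (which subsets $I$ destabilize, and the associated numerical weights) enters essentially. A related technical point is to justify that $\widetilde{\Omega_Y^j\otimes L}$ can indeed be taken to be a complex in non-positive degrees built from the asserted constituents; this requires the explicit description of the window and of the equivalence $D^b(Y)\simeq\mathcal G$ in \cite{dhl}, together with a bookkeeping of $\lambda_I$-weights along the strata. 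Finally, the same scheme — windows for a torus action, Künneth, and Bott vanishing on each coordinate $\mathbb{P}^1$, with the representation theory of $SL_2$ replaced by that of a torus — recovers Bott vanishing for smooth toric varieties.
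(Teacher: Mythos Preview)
Your setup is right --- the relative Euler sequence, the Koszul complex realizing $\pi^*\Omega_Y^j$ as $\Lambda^j L_{\mathfrak{X}^{ss}}$, and the appeal to quantization all match the paper. The gap is in what you do next.

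The equivariant lift $\Lambda^j L_{\mathfrak{X}}\otimes\mathcal{L}$ is a complex concentrated in degrees $0$ through $j$, not in non-positive degrees. On the unstable locus the map $\Omega_X\to\mathfrak{g}^\vee$ fails to be surjective (stabilizers are positive-dimensional there), so the cohomology sheaves $\mathcal{H}^p(\Lambda^j L_{\mathfrak{X}})$ are genuinely nonzero for $p>0$; your assertion that ``for $L$ ample one can arrange it to sit in non-positive cohomological degrees'' is unjustified and, for this lift, false. Consequently your reduction ``Bott vanishing will follow once $H^q(X,\mathcal{O}_X(\vec e)\otimes V)^{PGL_2}=0$ for $q>0$'' is wrong: that vanishing (immediate from Bott vanishing on $(\mathbb{P}^1)^n$) only collapses the hypercohomology spectral sequence to the complex of invariant global sections
\[
F^\cdot=\bigl[H^0(X,\Omega_X^j\otimes\mathcal{L})\to\cdots\to H^0(X,\mathcal{L})\otimes S^j\mathfrak{g}^\vee\bigr]^G,
\]
and one must still prove $\mathcal{H}^i(F^\cdot)=0$ for $i>0$. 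Your polygon-inequality heuristic concerns $H^{>0}$ of line bundles on $X$ and never touches this.

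The paper handles the remaining exactness in two separate steps: a Koszul argument on $X\times\mathbb{P}(\mathfrak{g})$ (Proposition~\ref{if lci}) kills $\mathcal{H}^i(F^\cdot)$ for $0<i<j$, while the case $i=j$ --- surjectivity of $(H^0(\Omega_X\otimes\mathcal{L})\otimes S^{j-1}\mathfrak{g}^\vee)^G\to(H^0(\mathcal{L})\otimes S^j\mathfrak{g}^\vee)^G$ --- is the real content, proved by an explicit graph-theoretic analysis of invariants (Propositions~\ref{graphs for H^1}, \ref{graphs for H^2}) together with an induction using the splitting $S^m\mathfrak{g}^\vee\cong V_{2m}\oplus S^{m-2}\mathfrak{g}^\vee$. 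This is where the no-strictly-semistable hypothesis enters crucially (via the nonexistence of $2$-colorings), and nothing in your sketch addresses it.
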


To prove this, we use the results of Halpern-Leistner's to carry out computations in the derived category of GIT quotients \cite{dhl}. Namely, we use his version of Teleman's Quantization Theorem \cites{teleman, dhl}. Roughly speaking, this theorem allows us, under certain conditions, to compute cohomologies $H^\cdot(X\git G,F)$ on the GIT quotient as $G$-equivariant cohomologies $H_G^\cdot(X,\tilde{F})$ on the ambient quotient stack $[X/G]$, where $\tilde{F}$ must be some object in the derived category of $[X/G]$ descending to $F$. The conditions required have to do with the weights of $\tilde{F}$ over the unstable locus, and have to be tested on a Kempf-Ness stratification of it. We refer the reader to \cites{castravet1,castravet1.1,castravet2} for a complete description of the GIT of the action of $PGL_2$ on $(\mathbb{P}^1)^n$ and a stratification of the unstable locus, as well as a description of the derived category of the quotient stack $[(\mathbb{P}^1)^n/PGL_2]$ in terms of an equivariant full exceptional collection.

For our case, in Section \ref{git section} we find an object $\Lambda^j L_{\mathfrak{X}}\otimes \mathcal{L}$ descending to $\Omega^j_{Y} \otimes L$ in the GIT quotient. Explicitly, $\Lambda^j L_{\mathfrak{X}}$ is the $j$-th exterior power of the two-step complex $\Omega_X \rightarrow \mathfrak{g}^\vee$ determined by the action, where $\mathfrak{g}$ is the Lie algebra of the group. In Section \ref{weights section} we check that this object satisfies the weights condition from the quantization theorem, and then devote most of the work to the corresponding computation of cohomology in the ambient quotient stack. We first see that, as a consequence of the Bott vanishing property on $X=(\mathbb{P}^1)^n$, this amounts to computing cohomologies of the complex of invariant global sections of the object $\Lambda^j L_{\mathfrak{X}}\otimes \mathcal{L}$ on $(\mathbb{P}^1)^n$ (see Lemma \ref{F computes hypercohomology}). Following the spirit of Weyman's method of geometric syzygies \cite{weyman}, we view these as sections of some sheaves in the product $X\times \mathbb{P}(\mathfrak{g})$, rather than sheaves on $X$. Let $M\subset X\times\mathbb{P}(\mathfrak{g})$ be the scheme-theoretic zero locus of the section $s:\Omega_X\rightarrow \mathfrak{g}^\vee$. Koszul resolution of $M$, together with an associated spectral sequence, yields then vanishing for the $i$-th cohomology in (\ref{vanishing}), for $i\neq 0,j$. This is discussed in Section \ref{koszul section}. The techniques used up to this point do not require the particular context of $PGL_2$ acting on $(\mathbb{P}^1)^n$, and can be applied to other GIT quotients $X\git G$ satisfying certain hypotheses. The main properties that we need are that of $X$ itself satisfying Bott vanishing and $M$ being a local complete intersection.

Next, we observe in Section \ref{abelian section} that in the case of an abelian group acting on a smooth affine variety, very similar techniques can be used to get a stronger vanishing result (see Theorem \ref{theorem bott for abelian}).

\begin{theorem}
Let $G$ be an abelian reductive group acting on a smooth affine variety $X$, over an algebraically closed field of characteristic zero. Let $\mathcal{L}$ be a linearization with no strictly semi-stable locus and descending to a line bundle $L$ in the GIT quotient $Y=X\git_\mathcal{L}G$. Suppose $G$ acts freely on $X^{ss}$ except for a subset of codimension at least $2$. Then $H^i(Y,\Omega_Y^j \otimes L)=0 \; \forall i>0, \forall j\geq0$.
\end{theorem}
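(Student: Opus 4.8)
The plan is to run the same strategy as in the proof of Theorem~\ref{main theorem}, the simplification being that under the present hypotheses two of its ingredients become much stronger: a smooth affine $X$ has no higher cohomology of coherent sheaves whatsoever (far more than Bott vanishing), and an abelian reductive $G$ acts trivially on its Lie algebra, so that $\mathfrak g$---and hence $\mathbb P(\mathfrak g)$ and $\mathcal O_{\mathbb P(\mathfrak g)}(1)$---carry the trivial $G$-action. First I would construct, exactly as in Section~\ref{git section}, the object $\Lambda^j L_{\mathfrak X}\otimes\mathcal L$ on $\mathfrak X:=[X/G]$, where $L_{\mathfrak X}=[\Omega_X\xrightarrow{s}\mathfrak g^\vee]$ is the cotangent complex of the quotient stack and $s$ is the dual of the infinitesimal action. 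On the locus where $G$ acts freely (the complement of a codimension $\geq 2$ subset of $X^{ss}$) the map $s$ is surjective, $L_{\mathfrak X}$ reduces to the rank-$\dim Y$ bundle $\ker s$, and $\Lambda^j L_{\mathfrak X}\otimes\mathcal L$ descends to $\Omega^j_Y\otimes L$; as $Y$ is then normal with at worst finite quotient singularities away from codimension $\geq 2$, this agrees with the $\Omega^j_Y$ of the statement. Halpern-Leistner's quantization theorem then identifies $H^i(Y,\Omega^j_Y\otimes L)$ with $\mathbb H^i\big(\mathfrak X,\Lambda^j L_{\mathfrak X}\otimes\mathcal L\big)$, once the weight condition is verified on the Kempf--Ness strata; because $\mathfrak g$ is a trivial $G$-module this verification is a special case of the one in Section~\ref{weights section}, and in the toric setting it is the usual combinatorics of the fan.

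The simplification is now the following. Since $X$ is affine and $G$ reductive in characteristic zero, both $\Gamma(X,-)$ and $(-)^G$ are exact, so $R\Gamma(\mathfrak X,-)=\Gamma(X,-)^G$ is exact and therefore
\[
\mathbb H^i\big(\mathfrak X,\Lambda^j L_{\mathfrak X}\otimes\mathcal L\big)=\Gamma\big(X,\ \mathcal H^i(\Lambda^j L_{\mathfrak X})\otimes\mathcal L\big)^G
\]
for every $i$, with no spectral sequence required. Over the open set where $s$ is surjective---i.e.\ where the stabilizer Lie algebra $\mathfrak g_x$ vanishes---$L_{\mathfrak X}$ is concentrated in degree $0$, so $\mathcal H^i(\Lambda^j L_{\mathfrak X})=0$ there for $i>0$ (and for $i=0$ as well when $j>\dim Y$, consistently with $\Omega^j_Y=0$). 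Hence for $i>0$ the sheaf $\mathcal H^i(\Lambda^j L_{\mathfrak X})$ is supported on $B:=\{x:\dim G_x>0\}$, and the hypothesis that $\mathcal L$ has no strictly semi-stable locus forces $B\subseteq X^{us}$, since every semi-stable point is then properly stable, hence has finite stabilizer. (Equivalently, this is the collapse of the Koszul spectral sequence of Section~\ref{koszul section}: on $X\times\mathbb P(\mathfrak g)$ the cohomology of $\mathbb P(\mathfrak g)=\mathbb P^{\dim\mathfrak g-1}$ sits only in degrees $0$ and $\dim\mathfrak g-1$, and the top-degree pieces involve only negative twists of $\mathcal O_{\mathbb P(\mathfrak g)}$, which land in non-positive total degree because $X$ is affine; what survives for $i>0$ is the class supported on $M$, whose image $\pi_1(M)=B$ lies in $X^{us}$.)

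It remains to prove that $\Gamma(X,\mathcal F\otimes\mathcal L)^G=0$ for a $G$-equivariant coherent sheaf $\mathcal F$ supported on $X^{us}$, applied to $\mathcal F=\mathcal H^i(\Lambda^j L_{\mathfrak X})$, $i>0$, which is a subquotient of $\Lambda^{j-i}\Omega_X\otimes S^i\mathfrak g^\vee$. An equivariant d\'evissage reduces this to showing $\Gamma(Z,\mathcal N\otimes\mathcal L|_Z)^G=0$ with $Z\subseteq X^{us}$ an integral $G$-invariant subvariety and $\mathcal N$ a rank-one $G$-equivariant sheaf occurring in the filtration, so that the $\lambda$-weight of $\mathcal N$ at a $\lambda$-fixed point is one of the finitely many bounded weights of $\Lambda^{j-i}\Omega_X$ there. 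Every point of $Z$ admits a destabilizing one-parameter subgroup $\lambda$ whose limit remains in $Z$ (the orbit closure lies in $Z$) and along which $\mathcal L$ acquires positive weight; provided that positive weight dominates the $\mathcal N$-weight, $Z$ has empty semi-stable locus for $\mathcal N\otimes\mathcal L$ and hence no invariant sections, giving the vanishing. This domination---the positive weight of $\mathcal L$ along each Kempf--Ness stratum outweighing the weights of $\Omega_X$ there---is precisely the input already needed to verify the weight condition for the quantization theorem, and it is the step I expect to be the main obstacle; in the toric case it is immediate from the defining inequalities of the fan, which is why the same argument recovers Bott vanishing for toric varieties.
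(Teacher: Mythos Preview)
Your reduction to showing $\Gamma(X,\mathcal F\otimes\mathcal L)^G=0$ for $\mathcal F=\mathcal H^i(\Lambda^j L_{\mathfrak X})$ supported on $X^{us}$ is correct, but the d\'evissage you propose to finish it is a genuine gap, and your justification for why it should close is mistaken. The weight condition verified for the quantization theorem is that the $\lambda_\alpha$-weights of $\Lambda^j L_{\mathfrak X}\otimes\mathcal L$ lie below $\eta_\alpha=\weight_{\lambda_\alpha}(\det\mathcal N^\vee_{S_\alpha})$; it is \emph{not} the statement that the weight of $\mathcal L$ dominates the weights of $\Lambda^{j-i}\Omega_X$. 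The latter is what your argument needs (you want $\weight_\lambda(\mathcal N\otimes\mathcal L)<0$), and it is simply false in general: the weights of $\Lambda^{j-i}\Omega_X$ grow with $j$, while the weight of $\mathcal L$ is fixed. So the ``main obstacle'' you flag is a real one, and it is not the same input as the quantization hypothesis.

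The paper sidesteps this entirely by exploiting the abelian hypothesis more fully. In the abelian case the weight estimate of Section~\ref{weights section} holds not just for the complex $\Lambda^j L_{\mathfrak X}$ but for each individual term $(\Lambda^j L_{\mathfrak X})^p=\Omega_X^{j-p}\otimes S^p\mathfrak g^\vee$ (because $\mathfrak n^-=0$, no cancellation between terms is needed). Hence the quantization theorem applies termwise: $H^q\big(\mathfrak X^{ss},\Omega_{X^{ss}}^{j-p}\otimes S^p\mathfrak g^\vee\otimes\mathcal L\big)=H^q\big(\mathfrak X,\Omega_X^{j-p}\otimes S^p\mathfrak g^\vee\otimes\mathcal L\big)$, and the right side vanishes for $q>0$ since $X$ is affine. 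This buys you two things at once. First, the hypercohomology on $\mathfrak X^{ss}$ is computed by the complex $(\bar F^\cdot)^G$ of invariant sections over $X^{ss}$ (not $X$). Second, and decisively, the Koszul spectral sequence can now be run on $X^{ss}\times\mathbb P(\mathfrak g)$, where the zero locus $M$ is \emph{empty} (every semistable point has finite stabilizer), so the augmented Koszul complex is exact and there is no residual term to analyze. The spectral sequence then forces $\mathcal H^i((\bar F^\cdot)^G)=0$ for every $i>0$ in one stroke, with no d\'evissage on the unstable locus and no comparison of weights of $\mathcal L$ against weights of $\Omega_X$. The passage to $Y$ is then Proposition~\ref{cohomology GIT}.
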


Observe that this is not the same as Bott vanishing, since the formula is only stated for the descent of the linearization $\mathcal{L}$, while Bott vanishing requires (\ref{vanishing}) to hold for any ample line bundle. However, this vanishing is essentially all that needs to be verified in the particular case that $X=\mathbb{A}^d$, where we have an explicit description of the $G$-ample cone and the ring of invariants, as detailed in \cite{hu-keel}. As a consequence, we obtain yet another proof of Bott vanishing for the toric case in characteristic zero (see Theorem \ref{thm bott for toric}). More precisely, we show it for a $\mathbb{Q}$-factorial projective toric variety over an algebraically closed field of characteristic zero, using its description as a GIT quotient of the affine space due to Cox \cite{cox-coxring}. We then hope that these techniques, using Halpern-Leistner's results, may be used to yield more examples of varieties satisfying Bott vanishing.

In Section \ref{main section} we finish the proof of Theorem \ref{main theorem}. Here we mostly deal with the vanishing of $H^j(Y,\Omega^j_Y\otimes L)$, where $Y=(\mathbb{P}^1)^n\git_\mathcal{L} PGL_2$. Given the work developed in the previous sections, this amounts to computing cohomology of the complex of invariant global sections of the object $\Lambda^j L_{\mathfrak{X}}\otimes \mathcal{L}$ defined in Section \ref{git section}. More precisely, we are left with the computation of the last cohomology of this complex, which is the same as investigating surjectivity of the map of invariant sections $H^0(X,\Omega_X\otimes S^{j-1}\mathfrak{g}^\vee)^{G}\rightarrow H^0(X,S^j\mathfrak{g}^\vee)^G$. To do this we use techniques that are particular to our case, that is, $PGL_2$ acting on $X=(\mathbb{P}^1)^n$. Namely, we handle invariant sections using the description of \cite{vakil+3}, \cite{vakil+3-II}, where sections are identified with linear combinations of directed graphs with prescribed degrees on the vertices.

%\begin{acknowledgements}
\subsection{Acknowledgments}
I would like to express my gratitude to my PhD advisor, Jenia Tevelev, for his guidance and insightful comments throughout this work. I also want to thank David Cox and Ana-Maria Castravet for helpful discussions, and Burt Totaro for useful comments and feedback. This project has been partially supported by the NSF grant DMS-1701704 (PI Jenia Tevelev).
%\end{acknowledgements}

\section{GIT quotients and quotient stacks}\label{git section}

We will consider a smooth projective-over-affine variety $X$ over an algebraically closed field $\Bbbk$ of characteristic $0$, meaning a closed subvariety of $\mathbb{A}^r\times\mathbb{P}^d$, with a reductive group $G$ acting on $X$. For an ample $G$-linearized line bundle $\mathcal{L}$ on $X$, we call $X^{ss}$ and $X^{s}$ the semi-stable and stable loci, respectively. We write $X^{us}$ for the unstable locus $X - X^{ss}$. Denote by $Y=X \git_\mathcal{L} G$ the corresponding GIT quotient, and $\pi : X^{ss} \twoheadrightarrow Y$ the quotient map from the semi-stable locus. The map $\pi$ is affine, in particular $\pi_*$ is exact, and we have $\pi_*(\mathcal{O}_{X^{ss}})^G=\mathcal{O}_Y$. The restriction to the stable locus gives a geometric quotient $X^s\rightarrow X^s\git G$. We will be mostly interested in the cases where there is no strictly semi-stable locus, that is, $X^{ss}=X^s$.

Given the action of $G$ on $X$, we denote by $\mathfrak{X}$ the corresponding quotient stack $[X/G]$. Coherent $\mathcal{O}_{\mathfrak{X}}$-modules are $G$-equivariant coherent $\mathcal{O}_X$-modules, and indeed $D^b(\mathfrak{X})=D^b_G(X)$, that is, an object in $D^b(\mathfrak{X})$ is represented by a $G$-equivariant chain complex in $D^b(X)$. Cohomology in $\mathfrak{X}$ is $G$-equivariant cohomology on $X$. For a given $G$-linearized ample line bundle $\mathcal{L}$, denote by $\mathfrak{X}^{ss}$ the corresponding open substack $[X^{ss}/G]$, with its inclusion $\imath:\mathfrak{X}^{ss} \hookrightarrow \mathfrak{X}$. The quotient map $\pi$ gives a map from the quotient stack $p:\mathfrak{X}^{ss}\rightarrow X\git G$. If $X^{ss}=X^s$, $\mathfrak{X}^{ss}$ is a Deligne-Mumford stack (see e.g. \cite{teleman} for a discussion), and the GIT quotient $Y=X\git_\mathcal{L} G$ is a coarse moduli space for $\mathfrak{X}^{ss}$. In this case, the map $p$ is finite. If, further, the action is free on $X^{ss}$, then $p$ is an isomorphism.

\begin{notation}
We denote by $H^i$ the $i$-th hypercohomology of a complex in $D^b(X)$, that is, the $i$-th derived functor $R^i\Gamma$ of the functor of global sections. We denote by $\mathcal{H}^i$ the $i$-th cohomology of the complex itself. Also, denote by $H_G^i$ the derived functor of invariant global sections $\Gamma_G$.
\end{notation}

\begin{remark}
$G$ is a reductive group, so taking $G$-invariants is an exact functor on finite dimensional representations. Therefore, for a complex $F^\cdot\in D^b(\mathfrak{X})$, $H^i(\mathfrak{X},F^\cdot)=H_G^i(X,F^\cdot)=H^i(X,F^\cdot)^G$.
\end{remark}

For an object $\tilde{F} \in D^b(\mathfrak{X}^{ss})$, we say that it ``descends'' to $F \in D^b({X}\git G)$ if $p^*(F)\cong\tilde{F}$, that is, if there is a $G$-equivariant isomorphism $\pi^*(F)\cong\tilde{F}$, where $p^*$, $\pi^*$ denote the derived pullbacks. Observe that given $\tilde{F}$, such $F$ is unique up to isomorphism: it has to be the pushforward $p_*(\tilde{F})=\pi_*(\tilde{F})^G$. In the case that $G$ acts freely on $X^{ss}$, $p$ is an isomorphism, so the categories $D^b(\mathfrak{X}^{ss})$ and $D^b(X\git G)$ are equivalent, via $F\mapsto \pi_*(F)^G$. In general, for an object $\tilde{F} \in D^b(\mathfrak{X})$, we say that it descends to $F \in D^b({X}\git G)$ if its restriction $\imath^*(\tilde{F})$ does.

\subsection{Kempf-Ness stratifications}

Let $\lambda:\mathbb{G}_m \rightarrow G$ be a one-parameter subgroup. If $F$ is a $G$-linearized line bundle on $X$ and $y \in X^\lambda$ is a $\lambda$-fixed point, $\lambda$ acts in the fiber $F_y$, with a given weight which we denote $\weight_\lambda F_y$. Similarly, if $F$ is a $G$-equivariant vector bundle, its $\lambda$-weights on $F_y$ are the eigenvalues of the action of $\lambda$ on $F_y$. For a $G$-equivariant complex $F^\cdot \in D^b(X)$, we refer to the $\lambda$-weights of $\mathcal{H}^i(F^\cdot)$ for all $i$ as the $\lambda$-weights of $F^\cdot$.

Suppose we have a $G$-linearized ample line bundle $\mathcal{L}$ for the action of $G$ on $X$. The unstable locus $X^{us}=X-X^{ss}$ can be described using the Hilbert-Mumford numerical criterion.

\begin{theorem}[(Hilbert-Mumford criterion)]
$X^{ss}$ (resp. $X^s$) consists of the points $x$ such that $\weight_\lambda \mathcal{L}_y \geq 0$ (resp. $>0$) for all $\lambda$ such that $y=\lim_{t \rightarrow 0} \lambda(t) x$ exists.
\end{theorem}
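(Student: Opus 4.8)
This is the classical Hilbert--Mumford numerical criterion, and the route I would take is the standard one: reduce to a linear action on a vector space, and there invoke the fundamental lemma that closures of $G$-orbits are detected by one-parameter subgroups. I will describe the argument for $X$ projective; the projective-over-affine case is the same bookkeeping, with semi-invariant regular sections in place of a projective embedding, and I do not dwell on it.

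Since $\mathcal{L}$ is ample, a suitable power $\mathcal{L}^{\otimes m}$ is very ample and its complete linear system gives a $G$-equivariant closed embedding $X \hookrightarrow \mathbb{P}(W)$ with $\mathcal{O}(1)|_X \cong \mathcal{L}^{\otimes m}$, where $W$ is a finite-dimensional $G$-representation and $\mathbb{P}(W)$ is the space of lines in $W$. For $x \in X$ choose a lift $\hat{x} \in W \setminus \{0\}$ to the affine cone. The first ingredient is Mumford's reconciliation of the two notions of (semi)stability: $x \in X^{ss}$ exactly when $0 \notin \overline{G\cdot\hat{x}}$ (closure taken in $W$), and $x \in X^{s}$ exactly when moreover $G\cdot\hat{x}$ is closed in $W$ and $\hat{x}$ has finite stabilizer in $G$. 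This rests on reductivity of $G$ --- invariants separating disjoint closed $G$-stable subsets --- and is independent of $m$ and of the chosen lift.

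Next I would carry out the weight translation for a fixed one-parameter subgroup $\lambda$. Decompose $\hat{x} = \sum_i \hat{x}_i$ into $\lambda$-weight vectors, $\lambda(t)\cdot\hat{x}_i = t^i\hat{x}_i$, and let $i_0 = i_0(x,\lambda)$ be the least weight occurring, $i_{\max}$ the greatest. Then $y = \lim_{t\to 0}\lambda(t)x$ exists in $\mathbb{P}(W)$, hence lies on $X$ as $X$ is closed and $\lambda$-stable, and $y = [\hat{x}_{i_0}]$; as $\lambda$ acts on the tautological line $\mathcal{O}(-1)_y = \Bbbk\hat{x}_{i_0}$ by $t^{i_0}$, the integer $\weight_\lambda\mathcal{L}_y$ has the same sign as $-i_0(x,\lambda)$ (indeed $\weight_\lambda\mathcal{L}^{\otimes m}_y = -i_0$). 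So the condition $\weight_\lambda\mathcal{L}_y \geq 0$ is precisely $i_0(x,\lambda)\leq 0$, and its strict version is $i_0(x,\lambda)<0$. Finally, inspecting the closure of $\lambda(\mathbb{G}_m)\cdot\hat{x}$ inside $W$, one sees it contains $0$ exactly when all occurring weights of $\hat{x}$ have the same sign, i.e. when $i_0>0$ or $i_{\max}<0$; since $i_0(x,\lambda^{-1}) = -i_{\max}(x,\lambda)$, requiring $i_0(x,\lambda)\leq 0$ for every $\lambda$ is equivalent to requiring $0\notin\overline{\lambda(\mathbb{G}_m)\cdot\hat{x}}$ for every $\lambda$.

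After these reductions the theorem becomes: $0\notin\overline{G\cdot\hat{x}}$ if and only if $0\notin\overline{\lambda(\mathbb{G}_m)\cdot\hat{x}}$ for every one-parameter subgroup $\lambda$. One direction is immediate from $\overline{\lambda(\mathbb{G}_m)\cdot\hat{x}}\subseteq\overline{G\cdot\hat{x}}$; the converse is the real content, and the step I expect to be the main obstacle. It rests on the fundamental lemma of Mumford (also due to Kempf, and over general fields to Birkes): if $0\in\overline{G\cdot\hat{x}}$ then in fact $\lim_{t\to 0}\lambda(t)\hat{x}=0$ for some one-parameter subgroup $\lambda$. The proof of this lemma is where the structure of the reductive group $G$ is really used: a standard curve/valuation argument turns the degeneration $0\in\overline{G\cdot\hat{x}}$ into a point $g(t)\in G(\Bbbk((t)))$ with $g(t)\cdot\hat{x}$ extending over $\Bbbk[[t]]$ and vanishing at $t=0$, and the Iwahori--Cartan decomposition $G(\Bbbk((t)))=G(\Bbbk[[t]])\cdot\Lambda\cdot G(\Bbbk[[t]])$ --- with $\Lambda$ the cocharacter lattice --- rewrites $g(t)=a(t)\lambda(t)b(t)$ with $a,b\in G(\Bbbk[[t]])$; since $a,b$ specialise to honest group elements at $t=0$, this reduces the degeneration to one of the form $\lambda(t)$ applied to a point of the $G$-orbit of $\hat{x}$, and a conjugation of $\lambda$ then degenerates $\hat{x}$ itself to $0$. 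Over an algebraically closed field of characteristic zero all of this goes through without modification. The stable statement runs in parallel, additionally tracking closedness of the orbit and finiteness of the stabilizer, which is exactly what upgrades $\weight_\lambda\mathcal{L}_y\geq 0$ to $\weight_\lambda\mathcal{L}_y>0$.
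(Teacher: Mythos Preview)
The paper does not prove this statement at all: the Hilbert--Mumford criterion is quoted as a classical result from GIT and used as background for describing Kempf--Ness stratifications. Your sketch is the standard proof (equivariant projective embedding, reduction to orbit closures in the affine cone, the weight bookkeeping, and the Iwahori/Kempf fundamental lemma), and it is correct in outline; there is simply nothing in the paper to compare it against.
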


Using the Hilbert-Mumford criterion, one can define what is called a Kempf-Ness (KN) stratification of the unstable locus, as described below (see \cite[\S2.1]{dhl} for a more detailed discussion). For a given one-parameter subgroup $\lambda: \mathbb{G}_m \rightarrow G$, and a connected component $Z$ of the fixed locus $X^\lambda$ we define the \textit{blade} of $\lambda,Z$ as
\begin{align*}
Y_{Z,\lambda} = \{ x \in X \mid \lim_{t \rightarrow 0} \lambda (t) \cdot x \in Z \},
\end{align*}
i.e., the points that are attracted to $Z$ as $t \rightarrow 0$. Under our assumption of smoothness on $X$, both $Z$ and $Y_{Z,\lambda}$ are smooth varieties, and in fact the projection $Y_{Z,\lambda}\to Z$, $x\mapsto \lim_{t \rightarrow 0} \lambda (t) \cdot x$ realizes $Y_{\lambda,Z}$ as a locally trivial vector bundle over $Z$. Now define $\mu(\lambda,Z)=-\frac{1}{|\lambda|}\weight_\lambda \mathcal{L}|_Z$, where $|\lambda|$ is a norm over one-parameter subgroups given by a choice of some suitable inner product in the cocharacter lattice of a maximal torus of $G$. Then we can write a stratification of the unstable locus by iteratively selecting a pair $(Z_\alpha,\lambda_\alpha)$ such that $\mu$ is positive and maximal among those $(Z,\lambda)$ for which $Z$ is not contained in the previously defined strata. We may assume $\lambda$ is a one-parameter subgroup of a maximal torus. Let $Z_\alpha^\circ \subset Z_\alpha$ be the open subset not intersecting any previous strata. We call $Y_\alpha = Y_{Z_\alpha^\circ,\lambda}$, the set attracted to $Z_\alpha^\circ$. Then the next stratum is $S_\alpha=G\cdot Y_\alpha$. It can be proved that this leads to an ascending sequence of finitely many $G$-equivariant open subvarieties $X^{ss}=X_0 \subset X_1 \subset \cdots \subset X$, where each $X_\alpha \backslash X_{\alpha-1}=S_\alpha$ is one of these strata.

For each stratum $S_\alpha$, given by a pair $\lambda_\alpha,Z_\alpha$, one can define the Levi subgroup $L_\alpha \subset G$ given by elements $g\in G$ that centralize $\lambda_\alpha$ and satisfy $g(Z_\alpha)\subset Z_\alpha$; and the parabolic subgroup $P_\alpha$ as the $g \in G$ such that $\lim_{t\rightarrow 0} \lambda (t) g \lambda(t)^{-1}$ exists and is in $L_\alpha$. We have a short exact sequence
\begin{align*}
1 \rightarrow U_\alpha \rightarrow P_\alpha \rightarrow L_\alpha \rightarrow 1
\end{align*}
where $U_\alpha =\{g \in G \mid \lim_{t\rightarrow 0} \lambda (t) g \lambda(t)^{-1} =1 \}$. The inclusion $L_\alpha \hookrightarrow P_\alpha$ allows us to write $P_\alpha$ as a semidirect product $U_\alpha \rtimes L_\alpha$.

We have that the action map $G\times_{P_\alpha}Y_\alpha\rightarrow G\cdot Y_\alpha$ is an isomorphism, where $G\times_{P_\alpha}Y_\alpha\overset{\pi}\rightarrow G/P_\alpha$ is the fibered bundle with fiber isomorphic to $Y_\alpha$. We also know that the $\lambda_\alpha$-weights of the conormal bundle $\mathcal{N}^\vee_{S_\alpha}$ restricted to $Z_\alpha$ are positive. Also, it is not hard to see that the $\lambda_\alpha$-weights of $T_{Y_\alpha}|_{Z_\alpha}$ are nonnegative (see \cite[\S 1.3]{vgit}, \cite[\S 12-13]{kirwan} for details).

\subsection{The complex $L_{\mathfrak{X}}$}

By differentiating the action of $G$ on $X$, we get a $G$-equivariant morphism of vector bundles $s: \mathfrak{g}\otimes \mathcal{O}_X \rightarrow T_X$. This map can be viewed as a $G$-equivariant vector field $s\in H^0(X,T_X\otimes \mathfrak{g}^\vee)^G$. By abuse of notation, we will write $\mathfrak{g}$ for $\mathfrak{g}\otimes \mathcal{O}$. Taking the dual we get a complex $L_\mathfrak{X}$:

\begin{notation}
The cotangent complex $L_\mathfrak{X} \in D^b(\mathfrak{X})$ is defined as the two-step $G$-linearized complex $\Omega_X \rightarrow \mathfrak{g}^\vee$, in degrees $0$ and $1$. We denote by $\Lambda^j L_\mathfrak{X}$ the $j$-th (derived) exterior power of this complex (see \cite[\S 2.4]{weyman}, \cite[\S I.4]{illusie1}).
\end{notation}

The complex $\Lambda^j L_\mathfrak{X}$ can then be written as the Koszul complex $0\rightarrow\Omega^j_{X} \rightarrow \Omega^{j-1}_{X} \otimes \mathfrak{g}^\vee \rightarrow \cdots \rightarrow S^j \mathfrak{g}^\vee \rightarrow 0$, in degrees $0$ to $j$, associated to $s:\mathfrak{g}\rightarrow T_X$ (see \cite{totaro-classif} and the references therein). 

Denote by $L_{\mathfrak{X}^{ss}}$ the restriction of $L_{\mathfrak{X}}$ to $\mathfrak{X}^{ss}$. Observe that the restriction of the map $\mathfrak{g}\rightarrow T_{X}$ to the stable locus is injective, since stabilizers are finite in $X^s$. This implies that if $X^{ss}=X^s$, we have a surjection $\Omega_{X^{ss}}\twoheadrightarrow \mathfrak{g}^\vee\otimes\mathcal{O}_{X^{ss}}=\Omega_{X^{ss}/\mathfrak{X}^{ss}}$, the relative cotangent bundle, and there is a short exact sequence $0 \rightarrow q^*\Omega_{\mathfrak{X}^{ss}} \rightarrow \Omega_{X^{ss}}\rightarrow \mathfrak{g}^\vee\rightarrow 0$, where $q:X^{ss}\rightarrow \mathfrak{X}^{ss}$ is the canonical quotient map to the quotient stack. In particular, $L_{\mathfrak{X}^{ss}}$ is (isomorphic to) a vector bundle, and the same is true for its exterior powers. If, further, the action is free on ${X}^{ss}$, the GIT quotient $Y$ is isomorphic to $\mathfrak{X}^{ss}$ and we have a $G$-equivariant short exact sequence of vector bundles $0 \rightarrow \pi^*\Omega_Y \rightarrow \Omega_{X^{ss}}\rightarrow \mathfrak{g}^\vee\rightarrow 0$. From this we see that in this case the restriction $L_{\mathfrak{X}^{ss}}$ is isomorphic to $\pi^*\Omega_{Y}$ in $D^b(\mathfrak{X}^{ss})$, that is, $L_{\mathfrak{X}}$ descends to $\Omega_{Y}$ and, for the same reason, each $\Lambda^j L_\mathfrak{X}$ descends to $\Omega^j_{Y}$.

\section{Weights and cohomology}\label{weights section}

The Quantization Theorem states that cohomologies of a complex $F \in D^b(\mathfrak{X}^{ss})$ can be computed in $\mathfrak{X}$ if $F$ is the restriction of some complex in $D^b(\mathfrak{X})$ satisfying a numerical condition related to the $\lambda$-weights in the unstable strata. It was proved by Teleman \cite{teleman} in the case where $F$ is a vector bundle, and Halpern-Leistner \cite[Theorem 3.29]{dhl} proved it for an arbitrary object in the derived category.

\begin{theorem}[(Quantization Theorem)] \label{teleman}
Let $\{S_\alpha\}$ be a KN stratification of the unstable locus, with the corresponding one-parameter subgroups $\lambda_\alpha$ and connected components $Z_\alpha$ of the fixed locus $X^{\lambda_\alpha}$. Let 
\begin{align*}
\eta_\alpha=\weight_{\lambda_\alpha}(\det \mathcal{N}^\vee_{S_\alpha})|_{Z_\alpha}.
\end{align*}
Suppose $\tilde{F} \in D^b(\mathfrak{X})$ restricts to $F \in D^b(\mathfrak{X}^{ss})$. If $\tilde{F}$ satisfies that, for every $l$ and every $\alpha$, all the $\lambda_\alpha$-weights of $\mathcal{H}^l(\tilde{F})|_{Z_\alpha}$ are $< \eta_{\alpha}$, then $H^\cdot(\mathfrak{X}^{ss},F)=H^\cdot(\mathfrak{X},\tilde{F})$.
\end{theorem}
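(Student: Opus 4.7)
The plan is to proceed by induction on the length of the KN stratification. Since the open inclusion $\mathfrak{X}^{ss} \hookrightarrow \mathfrak{X}$ factors through the chain of inclusions $\mathfrak{X}_{\alpha-1} \hookrightarrow \mathfrak{X}_\alpha$, with $\mathfrak{X}_\alpha \setminus \mathfrak{X}_{\alpha-1} = [S_\alpha/G]$, for each $\alpha$ there is a recollement-type exact triangle
\[ R\Gamma_{[S_\alpha/G]}(\mathfrak{X}_\alpha, \tilde{F}) \longrightarrow R\Gamma(\mathfrak{X}_\alpha, \tilde{F}) \longrightarrow R\Gamma(\mathfrak{X}_{\alpha-1}, \tilde{F}) \xrightarrow{+1}. \]
Chaining these, the desired isomorphism $R\Gamma(\mathfrak{X}, \tilde{F}) \cong R\Gamma(\mathfrak{X}^{ss}, F)$ would follow once I prove that $R\Gamma_{[S_\alpha/G]}(\mathfrak{X}_\alpha, \tilde{F}) = 0$ for every $\alpha$. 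This local vanishing is the core statement to establish.

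For a single stratum, I would exploit the isomorphism $S_\alpha \cong G \times_{P_\alpha} Y_\alpha$ to replace $G$-equivariant local cohomology along $S_\alpha$ by $P_\alpha$-equivariant local cohomology of $\tilde{F}|_{Y_\alpha}$ along $Y_\alpha$. Using the splitting $P_\alpha = U_\alpha \rtimes L_\alpha$, the unipotent radical $U_\alpha$ contracts to the identity along $\lambda_\alpha$, so this further reduces to an $L_\alpha$-equivariant computation on $Y_\alpha$. Finally, since $\lambda_\alpha \subset L_\alpha$ contracts $Y_\alpha$ onto $Z_\alpha$ as $t \to 0$, sheaf computations on $[Y_\alpha/L_\alpha]$ reduce to computations on $[Z_\alpha/L_\alpha]$, with the normal directions of $Z_\alpha \subset Y_\alpha$ contributing a $\mathrm{Sym}^\bullet \mathcal{N}^\vee_{Z_\alpha/Y_\alpha}$ factor carrying nonnegative $\lambda_\alpha$-weights.

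The key local computation is that $R\Gamma_{S_\alpha}(\tilde{F})|_{Z_\alpha}$, after the above reductions and a Koszul-style resolution in the normal direction to $S_\alpha$, admits a filtration whose associated graded pieces have the form $\tilde{F}|_{Z_\alpha} \otimes \mathrm{Sym}^\bullet \mathcal{N}_{S_\alpha}|_{Z_\alpha} \otimes \det \mathcal{N}^\vee_{S_\alpha}|_{Z_\alpha}$, shifted by the codimension. By definition $\det \mathcal{N}^\vee_{S_\alpha}|_{Z_\alpha}$ contributes the $\lambda_\alpha$-weight $\eta_\alpha$, and the symmetric algebra factor contributes nonnegative weights. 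Under the hypothesis that each $\mathcal{H}^l(\tilde{F})|_{Z_\alpha}$ has $\lambda_\alpha$-weights strictly less than $\eta_\alpha$, every $\lambda_\alpha$-weight in the filtration is strictly positive, and taking $G$-invariants (which factor through $\lambda_\alpha$-invariants) then kills the whole complex.

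The hardest step is handling the derived setting cleanly. Teleman's original argument is written for $\tilde{F}$ a vector bundle, where $\lambda_\alpha$-weights on a fiber are a concrete invariant. For a general complex in $D^b(\mathfrak{X})$ I would filter $\tilde{F}$ by its truncations, apply the vector-bundle case to each cohomology sheaf $\mathcal{H}^l(\tilde{F})$, and assemble the result via the spectral sequence of the filtration. The strict inequality in the hypothesis (applied to each $\mathcal{H}^l$ separately) is exactly what guarantees that the vanishing on each page propagates to the abutment, and that no extension between graded pieces can resurrect a nontrivial invariant class.
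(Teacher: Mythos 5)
First, a remark on scope: the paper does not prove Theorem \ref{teleman} at all --- it is imported as a black box from Teleman and from Halpern-Leistner \cite[Theorem 3.29]{dhl} --- so there is no internal proof to compare against. Measured against the actual proof in the literature, your outline does reproduce the correct architecture: induct over the closed KN strata via the local-cohomology triangles, reduce the vanishing of $R\Gamma_{[S_\alpha/G]}$ to a $P_\alpha$-equivariant computation on $Y_\alpha$ using $S_\alpha\cong G\times_{P_\alpha}Y_\alpha$, push down to $Z_\alpha$, and finish with a weight estimate; the truncation/spectral-sequence device to pass from the vector-bundle case to complexes is also how the hypothesis on each $\mathcal{H}^l(\tilde F)$ is meant to be used.

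The gap is in the one step that carries all the content, the weight estimate, and it is not just a typo: as stated your bound does not follow from the hypothesis. The graded pieces of $R\Gamma_{S_\alpha}(\tilde F)|_{Z_\alpha}$ are $\mathcal{H}^l(\tilde F)|_{Z_\alpha}\otimes \operatorname{Sym}^k\mathcal{N}_{S_\alpha}|_{Z_\alpha}\otimes\det\mathcal{N}_{S_\alpha}|_{Z_\alpha}$, shifted by the codimension: local cohomology is built from negative powers of the defining equations, so the twist is by $\det\mathcal{N}_{S_\alpha}$, of $\lambda_\alpha$-weight $-\eta_\alpha$, not by $\det\mathcal{N}^\vee_{S_\alpha}$ of weight $+\eta_\alpha$; likewise $\operatorname{Sym}^k\mathcal{N}_{S_\alpha}|_{Z_\alpha}$ has nonpositive weights, since the conormal bundle of the stratum has positive weights. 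With the correct signs the hypothesis gives weights $<\eta_\alpha+0-\eta_\alpha=0$, i.e.\ all weights \emph{strictly negative}; the further pushforward along the affine bundle $Y_\alpha\to Z_\alpha$ only adds nonpositive weights (here too, $\mathcal{N}^\vee_{Z_\alpha/Y_\alpha}$ has \emph{negative} weights, the opposite of what you assert), so the weight-zero part, hence the $\lambda_\alpha$-invariants and a fortiori the $G$-invariants, vanish. In your version you add $+\eta_\alpha$ and a nonnegative quantity to an \emph{upper} bound $<\eta_\alpha$ and conclude that every weight is strictly positive; but the hypothesis provides no lower bound on the weights of $\mathcal{H}^l(\tilde F)|_{Z_\alpha}$, so no such conclusion is available, and the argument as written does not close. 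Once the dualizations are corrected the sketch becomes the standard proof.
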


\begin{remark}
In particular, if the action is free on $X^{ss}$, this computes the cohomologies $H^\cdot(Y,F)$, where $Y$ is the GIT quotient.
\end{remark}

In fact, Halpern-Leistner's work says much more. His Categorical Kirwan Surjectivity says that, given a Kempf-Ness stratification $\{S_\alpha\}$ with inclusions $\sigma_\alpha:S_\alpha \hookrightarrow X$, we have that for any choice of integers $\{w_\alpha\}$, the full triangulated subcategory
\begin{align*}
G_w = \{F \in D^b(\mathfrak{X}) \mid \sigma_\alpha^*(F) \text{ is supported in weights } [w_\alpha,w_\alpha+\eta_\alpha)\}
\end{align*}
is equivalent to $D^b(\mathfrak{X}^{ss})$ via the restriction functor. For our purposes, we will only need to use the Quantization Theorem, which is related to fully-faithfulness of this restriction.

\begin{example}[\cite{dhl}]
Write $\mathbb{P}^n$ as the GIT quotient of $X=\mathbb{A}^{n+1}$ by $G=\mathbb{G}_m$. Write $\mathcal{O}_X(d)$ for the trivial line bundle on $\mathbb{A}^{n+1}$ with the linearization given by the character $t \mapsto t^d$, so that $\mathcal{O}_X(d)$ descends to $\mathcal{O}_{\mathbb{P}^n}(d)$ on $\mathbb{P}^n=\mathfrak{X}^{ss}$. The unstable locus is just the origin, and it is destabilized by $\lambda:t\mapsto t^{-1}$. We compute $\eta=n+1$ and $\weight_\lambda \mathcal{O}_X(d)=-d$. By the quantization theorem, $H^i(\mathbb{P}^n,\mathcal{O}_{\mathbb{P}^n}(d))=H_{\mathbb{G}_m}^i(\mathbb{A}^{n+1},\mathcal{O}_X(d))$ as long as $d>-n-1$, so $H^i(\mathbb{P}^n,\mathcal{O}_{\mathbb{P}^n}(d))=0$ whenever $i>0$ and $d>-n-1$. In fact, it turns out that $\mathcal{O}_{\mathbb{P}^n}(w),\ldots,\mathcal{O}_{\mathbb{P}^n}(w+n)$ is a full exceptional collection on $\mathbb{P}^n$ for any $w \in \mathbb{Z}$.
\end{example}

\begin{remark}
In some references, the blades $Y_{Z,\lambda}$ are defined by taking the limit as $t \rightarrow \infty$ rather than $0$. This causes a change of sign in the hypothesis of Theorem \ref{teleman}, namely, requiring that the weights be $>-\eta_\alpha$ instead. We will stick to the convention in \cite{dhl}, that is, the limit in $Y_{Z,\lambda}$ is taken as $t\rightarrow 0$ and the weights condition stays as stated above.
\end{remark}

In the following theorem and corollary, we check that we can apply the Quantization Theorem to $\Lambda^jL_\mathfrak{X}\otimes\mathcal{L}$, where $\mathcal{L}$ is the $G$-linearized ample line bundle on $X$. In the holomorphic setting, this was observed in \cite[Theorem 7.1]{teleman}.

\begin{theorem}
Let $G$ be a reductive group acting on $X$, and $\{S_\alpha\}$ a KN stratification of the unstable locus as described before. The $\lambda_\alpha$-weights of the complex $\Lambda^j L_{\mathfrak{X}}|_{Z_\alpha}$ are all $\leq \eta_\alpha$. If $G$ is abelian, then the $\lambda_\alpha$-weights of the individual terms $(\Lambda^j L_{\mathfrak{X}})^p|_{Z_\alpha}=(\Omega^{j-p}_X\otimes S^p\mathfrak{g}^\vee)|_{Z_\alpha}$ of the complex are all $\leq \eta_\alpha$.
\end{theorem}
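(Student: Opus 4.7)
The plan is to first establish $\lambda_\alpha$-weight bounds on the cotangent complex $L_\mathfrak{X}|_{Z_\alpha}$ via the geometry of the stratum $S_\alpha$, and then pass to $\Lambda^j L_\mathfrak{X}|_{Z_\alpha}$ using a Koszul-type filtration. Under $\lambda_\alpha$, $T_X|_{Z_\alpha}$ decomposes into its positive weight part $\mathcal{N}_{Z_\alpha/Y_\alpha}$, zero weight part $T_{Z_\alpha}$, and negative weight part $\mathcal{N}_{Y_\alpha/X}|_{Z_\alpha}$; similarly $\mathfrak{g} = \mathfrak{u}_\alpha \oplus \mathfrak{l}_\alpha \oplus \mathfrak{u}_\alpha^-$, and the infinitesimal action $\mathfrak{g} \to T_X|_{Z_\alpha}$ is weight-preserving. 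Using $S_\alpha \cong G \times_{P_\alpha} Y_\alpha$, the relative tangent sequence $0 \to T_{Y_\alpha}|_{Z_\alpha} \to T_{S_\alpha}|_{Z_\alpha} \to \mathfrak{g}/\mathfrak{p}_\alpha \to 0$ identifies $\mathcal{N}_{S_\alpha/X}|_{Z_\alpha} \cong \mathcal{N}_{Y_\alpha/X}|_{Z_\alpha}/\mathfrak{u}_\alpha^-$, confirming that the positive weights of $\mathcal{N}^\vee_{S_\alpha/X}|_{Z_\alpha}$ sum to $\eta_\alpha$.

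Since the infinitesimal action factors through $T_{S_\alpha}$, the conormal inclusion $\mathcal{N}^\vee_{S_\alpha/X}|_{Z_\alpha} \hookrightarrow \Omega_X|_{Z_\alpha}$ lies in the kernel of $d\colon \Omega_X|_{Z_\alpha} \to \mathfrak{g}^\vee$, yielding a short exact sequence of $G$-equivariant complexes
\begin{align*}
0 \to \mathcal{N}^\vee_{S_\alpha/X}|_{Z_\alpha}[0] \to L_\mathfrak{X}|_{Z_\alpha} \to [\Omega_{S_\alpha}|_{Z_\alpha} \to \mathfrak{g}^\vee] \to 0.
\end{align*}
The quotient is the cotangent complex of $[S_\alpha/G] \cong [Y_\alpha/P_\alpha]$, quasi-isomorphic to $[\Omega_{Y_\alpha}|_{Z_\alpha} \to \mathfrak{p}_\alpha^\vee]$; both of its terms have $\lambda_\alpha$-weights $\leq 0$ (since $T_{Y_\alpha}|_{Z_\alpha}$ and $\mathfrak{p}_\alpha = \mathfrak{l}_\alpha \oplus \mathfrak{u}_\alpha$ have weights $\geq 0$). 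Applying the derived $j$-th exterior power produces a Koszul-type filtration on $\Lambda^j L_\mathfrak{X}|_{Z_\alpha}$ whose associated graded pieces are $\Lambda^p \mathcal{N}^\vee_{S_\alpha/X}|_{Z_\alpha} \otimes \Lambda^{j-p}[\Omega_{Y_\alpha}|_{Z_\alpha} \to \mathfrak{p}_\alpha^\vee]$ for $p=0,\dots,j$. The first factor has $\lambda_\alpha$-weights equal to sums of $p$ distinct positive weights of $\mathcal{N}^\vee_{S_\alpha/X}|_{Z_\alpha}$, hence bounded above by $\eta_\alpha$; the second factor has cohomology with weights $\leq 0$. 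Thus every $\lambda_\alpha$-weight appearing in the cohomology of $\Lambda^j L_\mathfrak{X}|_{Z_\alpha}$ is $\leq \eta_\alpha$.

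For the abelian strengthening, $G$ abelian forces $U_\alpha = U_\alpha^- = \{1\}$, so $\mathfrak{g}$ is concentrated in weight $0$ under $\lambda_\alpha$, every $S^p \mathfrak{g}^\vee|_{Z_\alpha}$ is weight zero, and $\mathcal{N}_{S_\alpha/X}|_{Z_\alpha} = \mathcal{N}_{Y_\alpha/X}|_{Z_\alpha}$; the positive weight part of $\Omega_X|_{Z_\alpha}$ is then exactly $\mathcal{N}^\vee_{S_\alpha/X}|_{Z_\alpha}$, with weight sum $\eta_\alpha$. Any weight of $\Omega^{j-p}_X|_{Z_\alpha} \otimes S^p \mathfrak{g}^\vee|_{Z_\alpha}$ is bounded by a sum of distinct positive weights of $\mathcal{N}^\vee_{S_\alpha/X}|_{Z_\alpha}$, hence by $\eta_\alpha$. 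The hard part in the non-abelian case will be setting up the derived Koszul-type filtration on $\Lambda^j$ of the short exact sequence of complexes and justifying the quasi-isomorphism identifying the quotient with $[\Omega_{Y_\alpha}|_{Z_\alpha} \to \mathfrak{p}_\alpha^\vee]$, both of which require care with the derived exterior power formalism (\emph{à la} Illusie or Weyman); once in place, the weight bounds are an immediate consequence of the decomposition above.
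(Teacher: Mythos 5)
Your argument is correct, and it rests on the same geometric inputs as the paper's proof: the $\lambda_\alpha$-weight decomposition of $T_X|_{Z_\alpha}$ into attracting, fixed and repelling directions, the identification $S_\alpha\cong G\times_{P_\alpha}Y_\alpha$ which lets one cancel the acyclic piece $\mathfrak{n}^-\xrightarrow{\sim}\pi^*T_{G/P_\alpha}$ and replace $[\mathfrak{g}\to T_{S_\alpha}]|_{Z_\alpha}$ by $[\mathfrak{p}_\alpha\to T_{Y_\alpha}]|_{Z_\alpha}$, and the fact that all positive weights of $\Omega_X|_{Z_\alpha}$ relevant to the bound sit in $\mathcal{N}^\vee_{S_\alpha}|_{Z_\alpha}$ and sum to $\eta_\alpha$. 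Where you genuinely diverge is the passage to exterior powers. The paper restricts to an affine open $Z'\subset Z_\alpha$ so that every exact sequence of bundles splits, writes $L_\mathfrak{X}|_{Z'}$ as a direct sum of an acyclic complex and $\left[\Omega_{Y_\alpha}|_{Z'}\oplus\mathcal{N}^\vee_{S_\alpha}|_{Z'}\to\mathfrak{p}_\alpha^\vee\right]$, and applies Weyman's formula for exterior powers of direct sums. You instead keep the conormal bundle as a genuine subcomplex, $0\to\mathcal{N}^\vee_{S_\alpha}|_{Z_\alpha}[0]\to L_\mathfrak{X}|_{Z_\alpha}\to[\Omega_{S_\alpha}|_{Z_\alpha}\to\mathfrak{g}^\vee]\to 0$, identify the quotient with $[\Omega_{Y_\alpha}|_{Z_\alpha}\to\mathfrak{p}_\alpha^\vee]$ up to quasi-isomorphism, and read off the bound from the Koszul filtration of $\Lambda^j$ of an extension, whose graded pieces are $\Lambda^p\mathcal{N}^\vee_{S_\alpha}|_{Z_\alpha}\otimes\Lambda^{j-p}[\Omega_{Y_\alpha}|_{Z_\alpha}\to\mathfrak{p}_\alpha^\vee]$. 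What your route buys is that no shrinking to an affine open is needed (the weight decomposition of an equivariant bundle over the $\lambda_\alpha$-fixed locus is already a global direct sum), and the conormal direction is isolated functorially; the cost is that you must invoke the derived exterior-power filtration for a termwise-exact sequence of two-term complexes (as in \S I.4 of Illusie or the Schur-complex machinery in Weyman, Ch.\ 2) rather than only the direct-sum formula the paper uses. Both tools are standard, so the two steps you flag as needing care are indeed routine, and your treatment of the abelian case coincides with the paper's.
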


\begin{proof}
Let $Z=Z_\alpha$ correspond to the stratum $S_\alpha$, and let $\lambda = \lambda_\alpha$ be the corresponding one-parameter subgroup. Since the weights condition is local, it is enough to compute them when we further restrict to an open affine $Z'\subset Z$. Consider the restriction
\begin{align} \label{restricted complex}
\mathfrak{g} \rightarrow T_X|_{Z'}
\end{align}
of the dual of $L_\mathfrak{X}$ to $Z'$. Include $\lambda$ in a maximal torus of $G$ and let $\mathfrak{h} \subset \mathfrak{g}$ be the corresponding Cartan subalgebra. We can write a root decomposition
\begin{align*}
\mathfrak{g}=\mathfrak{h} \oplus \bigoplus_{\beta \in \Phi} \mathfrak{g}_\beta.
\end{align*}
Observe that $\mathfrak{p} = \mathfrak{h} \oplus \bigoplus_{\beta(d\lambda) \geq 0} \mathfrak{g}_\beta$ is precisely the Lie algebra of the parabolic subgroup $P=P_\alpha$. Call $\mathfrak{n}^{-}=\bigoplus_{\beta(d\lambda) < 0} \mathfrak{g}_\beta$, so that $\mathfrak{g} = \mathfrak{p} \oplus \mathfrak{n}^{-}$.

Let $Y= Y_\alpha^\circ$ the corresponding blade. Using the isomorphism $G \times_P Y \cong G \cdot Y$, we get a short exact sequence of tangent sheaves:
\begin{align*}
0 \rightarrow G\times_P T_Y \rightarrow T_{G \cdot Y} \rightarrow \pi^*T_{G/P} \rightarrow 0.
\end{align*}
When we restrict to $Z'$, this sequence splits, since these are vector bundles and $Z'$ is affine. Therefore we have $T_{G\cdot Y} |_{Z'} = T_Y |_{Z'} \oplus \pi^*(T_{G/P})|_{Z'}$. The first summand has only nonnegative $\lambda$-weights, while the second summand has all negative $\lambda$-weights. Indeed, by the definition we have $\mathfrak{n}^{-} \overset{\sim}{\rightarrow} \pi^*(T_{G/P}) |_{Z'}$.

Now the restriction of the sequence
\begin{align*}
0 \rightarrow T_{G\cdot Y} \rightarrow T_X|_{G\cdot Y} \rightarrow \mathcal{N}_{G\cdot Y} X \rightarrow 0
\end{align*}
to $Z'$ splits, again because $Z'$ is affine, so we obtain $T_X |_{Z'} = T_Y|_{Z'} \oplus (\pi^*T_{G/P}) |_{Z'} \oplus (\mathcal{N}_{G\cdot Y}X)|_{Z'}$. Then the complex (\ref{restricted complex}) can be written as the direct sum of the complexes
\begin{align*}
\mathfrak{n}^{-} \overset{\sim}{\rightarrow} (\pi^*T_{G/P})|_{Z'} 
\end{align*}
and
\begin{align*}
\mathfrak{p} \rightarrow T_Y|_{Z'} \oplus (\mathcal{N}_{S}X)|_{Z'}.
\end{align*}
Similarly, the restricted complex $L_\mathfrak{X}|_{Z'}=\left[ \Omega_X|_{Z'}\rightarrow \mathfrak{g}^\vee\right]$ is written as a direct sum of two complexes, namely the duals of the complexes above. Therefore, by \cite[Proposition 2.4.7]{weyman}, the exterior powers of $\Omega_X|_{Z'}\rightarrow \mathfrak{g}^\vee$ are quasi-isomorphic to those of the complex $\Omega_Y|_{Z'} \oplus (\mathcal{N}^\vee_{S}X)|_{Z'} \rightarrow \mathfrak{p}^\vee$. Now the exterior and symmetric powers of $\mathfrak{p}$ and $T_Y|_{Z'}$ all have nonnegative $\lambda$-weights, so their duals have weights $\leq 0$. On the other hand, the weights of $\mathcal{N}^\vee_{S}X |_{Z'}$ are all positive and the sum of all of them is $\text{weight}_\lambda \det \mathcal{N}^\vee_S X |_{Z'} = \eta_\alpha$. Combining all these, we see that for every $j$, the weights of $\Lambda^j(\Omega_Y |_{Z'} \oplus \mathcal{N}^\vee_S X |_{Z'} \rightarrow \mathfrak{p}^\vee)$ are all $\leq \eta_\alpha$.

If $G$ is abelian, then $G=P$, $\mathfrak{n}^{-}=0$ and the weights of $\mathfrak{g}$ are all $0$. Then it suffices to know that the exterior powers of $\Omega_Y|_{Z'} \oplus (\mathcal{N}^\vee_{S}X)|_{Z'}$ have weights $\leq \eta_\alpha$ for the reasons above.
\end{proof}

\begin{corollary} \label{can use teleman}
Let $\mathcal{L} \in D^b(\mathfrak{X})$ be the linearization. Then the complex $\Lambda^j L_{\mathfrak{X}}\otimes \mathcal{L}$ satisfies the hypothesis of Theorem \ref{teleman}, so 
\begin{align*}
H^i(\mathfrak{X}^{ss},\Lambda^j L_{\mathfrak{X}^{ss}}\otimes \mathcal{L})=H^i(\mathfrak{X},\Lambda^j L_{\mathfrak{X}}\otimes \mathcal{L}). 
\end{align*}
If $G$ is abelian, we also have $H^i(\mathfrak{X}^{ss},(\Lambda^j L_{\mathfrak{X}^{ss}})^p\otimes \mathcal{L})=H^i(\mathfrak{X},(\Lambda^j L_{\mathfrak{X}})^p\otimes \mathcal{L})$ for each $p$.
\end{corollary}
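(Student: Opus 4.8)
The plan is to deduce this directly from the preceding theorem together with the strict inequality that is built into the construction of a Kempf--Ness stratification. First I would record that, by the discussion of $L_{\mathfrak{X}^{ss}}$ in Section~\ref{git section}, the restriction $\imath^*(\Lambda^j L_{\mathfrak{X}}\otimes\mathcal{L})$ is exactly $\Lambda^j L_{\mathfrak{X}^{ss}}\otimes\mathcal{L}$ (and likewise $\imath^*\bigl((\Lambda^j L_{\mathfrak{X}})^p\otimes\mathcal{L}\bigr)=(\Lambda^j L_{\mathfrak{X}^{ss}})^p\otimes\mathcal{L}$ for each $p$). So it suffices to check that $\tilde{F}=\Lambda^j L_{\mathfrak{X}}\otimes\mathcal{L}$ satisfies the hypothesis of Theorem~\ref{teleman} and then quote that theorem verbatim.

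For the weight bound I would combine two inputs. On one hand, since $\mathcal{L}$ is a line bundle it is flat, so for each $l$ we have $\mathcal{H}^l(\tilde{F})|_{Z_\alpha}=\mathcal{H}^l(\Lambda^j L_{\mathfrak{X}})|_{Z_\alpha}\otimes\mathcal{L}|_{Z_\alpha}$; because $\lambda_\alpha$ fixes the connected variety $Z_\alpha$ pointwise, the integer $w_\alpha:=\weight_{\lambda_\alpha}\mathcal{L}|_{Z_\alpha}$ is well defined, and tensoring with $\mathcal{L}$ shifts every $\lambda_\alpha$-weight of $\mathcal{H}^l(\Lambda^j L_{\mathfrak{X}})|_{Z_\alpha}$ by $w_\alpha$. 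By the preceding theorem those weights are all $\leq\eta_\alpha$, hence the weights of $\mathcal{H}^l(\tilde{F})|_{Z_\alpha}$ are all $\leq\eta_\alpha+w_\alpha$. On the other hand, in the construction of the KN stratification the pair $(Z_\alpha,\lambda_\alpha)$ is selected so that $\mu(\lambda_\alpha,Z_\alpha)=-\tfrac{1}{|\lambda_\alpha|}\weight_{\lambda_\alpha}\mathcal{L}|_{Z_\alpha}$ is positive; since $|\lambda_\alpha|>0$ this forces $w_\alpha<0$, and as it is an integer, $w_\alpha\leq -1$. Therefore all $\lambda_\alpha$-weights of every $\mathcal{H}^l(\tilde{F})|_{Z_\alpha}$ are $\leq\eta_\alpha+w_\alpha<\eta_\alpha$, which is precisely the hypothesis of Theorem~\ref{teleman}. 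Applying it yields $H^i(\mathfrak{X}^{ss},\Lambda^j L_{\mathfrak{X}^{ss}}\otimes\mathcal{L})=H^i(\mathfrak{X},\Lambda^j L_{\mathfrak{X}}\otimes\mathcal{L})$.

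For the abelian statement I would run the identical argument on the single-term object $\tilde{F}=(\Lambda^j L_{\mathfrak{X}})^p\otimes\mathcal{L}=(\Omega^{j-p}_X\otimes S^p\mathfrak{g}^\vee)\otimes\mathcal{L}$, which is concentrated in degree $p$ and restricts to $(\Lambda^j L_{\mathfrak{X}^{ss}})^p\otimes\mathcal{L}$ on $\mathfrak{X}^{ss}$. The abelian part of the preceding theorem bounds the $\lambda_\alpha$-weights of $(\Omega^{j-p}_X\otimes S^p\mathfrak{g}^\vee)|_{Z_\alpha}$ by $\eta_\alpha$, and the same shift by $w_\alpha\leq -1$ turns this into a strict inequality $<\eta_\alpha$, so Theorem~\ref{teleman} applies term by term.

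There is no genuine obstacle here beyond bookkeeping: the substantive content lies in the preceding theorem, and the only point requiring care is that tensoring with $\mathcal{L}$ \emph{strictly} lowers the relevant weights, which is immediate from the positivity of $\mu$ on every KN stratum. The one thing to be sure of is that the restriction identifications $\imath^*(\Lambda^j L_{\mathfrak{X}})=\Lambda^j L_{\mathfrak{X}^{ss}}$ and its $p$-th term analogue are in force, which they are by Section~\ref{git section}.
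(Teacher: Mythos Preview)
Your proposal is correct and follows essentially the same approach as the paper: the paper's proof is a two-line observation that $\weight_{\lambda_\alpha}\mathcal{L}|_{Z_\alpha}<0$ by definition of the KN stratification and that weights are additive under tensor product, and you have simply spelled out this argument in full detail. The extra bookkeeping you include (restriction identifications, flatness of $\mathcal{L}$, the integer remark) is all fine but not strictly needed.
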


\begin{proof}
Indeed, by definition of the stratification, $\text{weight}_{\lambda_\alpha} \mathcal{L} |_{Z_\alpha} <0$ for every $\alpha$, and weights are additive with respect to tensor product.
\end{proof}

\section{The Koszul complex of sections}\label{koszul section}

Let $\mathcal{L}$ be a $G$-linearized ample line bundle on a smooth projective-over-affine variety $X$ and consider the complex $\Lambda^j L_\mathfrak{X} \otimes \mathcal{L}$. We want to investigate the associated complex $F^\cdot$ of global sections,
\begin{align} \label{koszul F}
F^\cdot=\left[ 0\rightarrow H^0(X,\Omega^j_X \otimes \mathcal{L}) \rightarrow H^0(X,\Omega_X^{j-1} \otimes \mathcal{L})\otimes \mathfrak{g}^\vee \rightarrow \cdots \rightarrow H^0(X,\mathcal{L})\otimes S^j\mathfrak{g}^\vee\rightarrow 0 \right],
\end{align}
concentrated in degrees $0$ to $j$. For the remainder of the section, we extend the definition of Bott vanishing to a smooth projective-over-affine variety using equation (\ref{vanishing}), that is, $X$ satisfies Bott vanishing if (\ref{vanishing}) is valid for every ample line bundle on $X$.

\begin{lemma} \label{F computes hypercohomology}
Suppose $X$ satisfies Bott vanishing. Then the hypercohomology of $\Lambda^j L_\mathfrak{X}\otimes \mathcal{L}$ equals the $G$-equivariant cohomology of $F^\cdot$, that is, $H^i(\mathfrak{X},\Lambda^j L_\mathfrak{X}\otimes \mathcal{L})= \mathcal{H}^i(F^\cdot)^G$.
\end{lemma}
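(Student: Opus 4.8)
The plan is to compute $H^i(\mathfrak{X}, \Lambda^j L_{\mathfrak{X}} \otimes \mathcal{L}) = H^i(X, \Lambda^j L_{\mathfrak{X}} \otimes \mathcal{L})^G$ via the hypercohomology spectral sequence of the two-step-type complex $\Lambda^j L_{\mathfrak{X}} \otimes \mathcal{L}$, which (as recalled in the excerpt) is the Koszul complex $0 \to \Omega^j_X \otimes \mathcal{L} \to \Omega^{j-1}_X \otimes \mathfrak{g}^\vee \otimes \mathcal{L} \to \cdots \to S^j\mathfrak{g}^\vee \otimes \mathcal{L} \to 0$ sitting in degrees $0$ to $j$. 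The standard first-quadrant spectral sequence of a bounded complex of sheaves has $E_1^{p,q} = H^q(X, (\Lambda^j L_{\mathfrak{X}})^p \otimes \mathcal{L})$ converging to $H^{p+q}(\mathfrak{X}, \Lambda^j L_{\mathfrak{X}} \otimes \mathcal{L})$ after taking $G$-invariants (which is exact, so it commutes with the spectral sequence). The term $(\Lambda^j L_{\mathfrak{X}})^p = \Omega^{j-p}_X \otimes S^p\mathfrak{g}^\vee$ is, up to the trivial vector space factor $S^p\mathfrak{g}^\vee$, just $\Omega^{j-p}_X$; so $E_1^{p,q} = H^q(X, \Omega^{j-p}_X \otimes \mathcal{L}) \otimes S^p\mathfrak{g}^\vee$.

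The key step is then to invoke the Bott vanishing hypothesis on $X$: since $\mathcal{L}$ is ample, $H^q(X, \Omega^{j-p}_X \otimes \mathcal{L}) = 0$ for all $q > 0$ and all $p$ (here $j - p \ge 0$ since $0 \le p \le j$, so $\Omega^{j-p}_X$ is a legitimate sheaf of differential forms). Hence the $E_1$ page is concentrated in the single row $q = 0$, namely $E_1^{p,0} = H^0(X, \Omega^{j-p}_X \otimes \mathcal{L}) \otimes S^p\mathfrak{g}^\vee = (F^\cdot)^p$, with all differentials being exactly the differentials of the complex $F^\cdot$. Since the spectral sequence degenerates at $E_2$ (everything lives in one row), we get $H^i(\mathfrak{X}, \Lambda^j L_{\mathfrak{X}} \otimes \mathcal{L}) = E_2^{i,0} = \mathcal{H}^i(F^\cdot)$, and applying the exact functor of $G$-invariants throughout yields $H^i(\mathfrak{X}, \Lambda^j L_{\mathfrak{X}} \otimes \mathcal{L}) = \mathcal{H}^i(F^\cdot)^G$, which is the claim.

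One bookkeeping point deserving care is that the relevant spectral sequence for a cochain complex of sheaves concentrated in nonnegative degrees is genuinely first-quadrant, so there are no convergence subtleties; and one should double-check that the $E_1$-differential $E_1^{p,0} \to E_1^{p+1,0}$ is induced by the complex differential $(\Lambda^j L_{\mathfrak{X}})^p \otimes \mathcal{L} \to (\Lambda^j L_{\mathfrak{X}})^{p+1} \otimes \mathcal{L}$ on global sections, which it is, essentially by construction of the spectral sequence of a filtered complex (the stupid/b\^ete filtration). The use of the projective-over-affine hypothesis is only to guarantee that the cohomology groups $H^0(X, \Omega^{j-p}_X \otimes \mathcal{L})$ and the Bott vanishing statement make sense in this slightly more general setting, as flagged in the paragraph preceding the lemma.

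I do not expect a serious obstacle here: the only thing to be careful about is making sure that Bott vanishing is applied to each $\Omega^{j-p}_X$ with $0 \le p \le j$ (so each exponent $j - p$ is a genuine nonnegative integer), and that taking $G$-invariants — exact because $G$ is reductive and we are in characteristic zero, as recalled in the earlier remark — can be moved past the formation of cohomology of $F^\cdot$. If anything is mildly delicate, it is merely phrasing the degeneration cleanly: once the $E_1$ page is a single nonzero row, the associated graded of the abutment in total degree $i$ has only one nonzero piece, namely $\mathcal{H}^i$ of that row, so no extension problems arise.
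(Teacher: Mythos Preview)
Your argument is correct and follows the same approach as the paper: both use the hypercohomology spectral sequence $E_1^{p,q}=H^q(X,(\Lambda^j L_\mathfrak{X})^p\otimes\mathcal{L})$, invoke Bott vanishing on $X$ to collapse it to the row $q=0$, identify that row with $F^\cdot$, and then take $G$-invariants. The only cosmetic slip is the intermediate line $H^i(\mathfrak{X},\Lambda^j L_\mathfrak{X}\otimes\mathcal{L})=\mathcal{H}^i(F^\cdot)$, where the left side already carries $G$-invariants and the right does not; you correct this in the next sentence, so the argument is fine.
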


\begin{proof}
Consider $\Lambda^j L_\mathfrak{X} \otimes \mathcal{L}$ as a complex of coherent sheaves on $X$. From a suitable bi-complex resolution we get a spectral sequence $E_1^{p,q}=H^q(X,(\Lambda^j L_\mathfrak{X})^p\otimes \mathcal{L})$ converging to the hypercohomology $H^{p+q}(X,\Lambda^j L_\mathfrak{X}\otimes \mathcal{L})$. Since $X$ itself satisfies Bott vanishing, all the terms $E_1^{p,q}$ are equal to zero except for $q=0$:
\[
\begin{tikzcd}[cramped,sep=scriptsize]
0 \arrow[r] & 0         \arrow[r] & 0\arrow[r] & \cdots \arrow[r] & 0 \arrow[r] & 0\phantom{.}\\
0 \arrow[r] & E_1^{0,0} \arrow[r] & E_1^{1,0}\arrow[r] & \cdots \arrow[r] & E_1^{j,0} \arrow[r] & 0.
\end{tikzcd}
\]

Therefore the sequence degenerates at $E_2$. We get $E_\infty^{p,0}=E_2^{p,0}= \mathcal{H}^p(E_1^{\cdot,0})$. The hypercohomology $H^p(\mathfrak{X},\Lambda^j L_\mathfrak{X}\otimes \mathcal{L})$ equals the invariant sections $H^p({X},\Lambda^j L_\mathfrak{X}\otimes \mathcal{L})^G = \mathcal{H}^p(E_1^{\cdot,0})^G$. But the complex $E_1^{\cdot,0}$ is precisely $F^\cdot$.
\end{proof}

Now write $\mathbb{P}^m= \mathbb{P}(\mathfrak{g})$ (the space of lines in $\mathfrak{g}$) and let $W = X \times \mathbb{P}(\mathfrak{g})$, carrying a canonical $G$-action. Observe $H^0(\mathbb{P}(\mathfrak{g}),\mathcal{O}_{\mathbb{P}(\mathfrak{g})}(l))=S^l\mathfrak{g}^\vee$ for each $l\geq 0$. Given the action, the vector bundle $T_X \boxtimes \mathcal{O}_{\mathbb{P}(\mathfrak{g})}(1)$ has a canonical $G$-equivariant global section $s \in H^0(X\times\mathbb{P}(\mathfrak{g}),T_X\boxtimes \mathcal{O}_{\mathbb{P}(\mathfrak{g})}(1))^G=(H^0(X,T_X)\otimes \mathfrak{g}^\vee)^G$, which is the one giving the map $\Omega_X \rightarrow \mathfrak{g}^\vee$. Let $M \subset W$ be the scheme-theoretic zero locus of $s$. Suppose this is a local complete intersection, that is, the section $s$ is given locally by a regular sequence. By smoothness of $X\times(\mathbb{P}^1)^n$, this is equivalent to $\codim M=n$, where $n=\rk (T_X\boxtimes\mathcal{O}_{\mathbb{P}(\mathfrak{g})}(1)) =\dim X$ (see e.g. \cite[\S II.8]{hartshorne}). In this case, the associated augmented Koszul complex
\begin{align} \label{K_s}
K_s^\cdot = \left[ 0\rightarrow\Omega_X^n \boxtimes \mathcal{O}_{\mathbb{P}(g)}(-n) \rightarrow \cdots \rightarrow \Omega_X \boxtimes \mathcal{O}_{\mathbb{P}(\mathfrak{g})}(-1) \rightarrow \mathcal{O}_W \rightarrow \mathcal{O}_M\rightarrow 0 \right]
\end{align}
is exact (see e.g. \cite[Corollary 4.5.4]{weibel}). We consider this complex to be concentrated in degrees $-n$ to $1$, that is, $K_s^p=\Omega_X^{-p}\otimes\mathcal{O}_{\mathbb{P}^m}(p)$ for $p\leq 0$ and $K_s^1=\mathcal{O}_M$.

\begin{proposition} \label{if lci}
Suppose $M$ is a local complete intersection and suppose $X$ satisfies Bott vanishing. Then $H^i(\mathfrak{X}^{ss},\Lambda^j L_{\mathfrak{X}^{ss}}\otimes \mathcal{L})=0$ for $i \neq 0, j$. If, in addition, $H^0(M,\mathcal{L}\boxtimes \mathcal{O}_{\mathbb{P}(\mathfrak{g})}(j)|_M)^G=0$, then $H^j(\mathfrak{X}^{ss},\Lambda^j L_{\mathfrak{X}^{ss}}\otimes \mathcal{L})=0$ too.
\end{proposition}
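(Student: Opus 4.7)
The plan is to exploit Corollary \ref{can use teleman} and Lemma \ref{F computes hypercohomology} to reduce the claim to showing $\mathcal{H}^i(F^\cdot)^G=0$ in the appropriate range, and then to carry this out by relating $F^\cdot$ to the Koszul geometry of $M\subset W$.

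First, using $S^p\mathfrak{g}^\vee=H^0(\mathbb{P}^m,\mathcal{O}(p))$ and K\"unneth, I would recognize $F^\cdot$ as the complex of global sections on $W$ of
\[
G^\cdot := \bigl[\Omega_X^j\otimes\mathcal{L}\boxtimes\mathcal{O}\to\Omega_X^{j-1}\otimes\mathcal{L}\boxtimes\mathcal{O}(1)\to\cdots\to\mathcal{L}\boxtimes\mathcal{O}(j)\bigr]
\]
in degrees $[0,j]$, with Koszul differential induced by $s$. Bott vanishing on $X$ together with $H^{>0}(\mathbb{P}^m,\mathcal{O}(p))=0$ for $p\geq 0$ kills the higher sheaf cohomologies $H^q(W,G^p)$ for $q>0$, so the hypercohomology spectral sequence degenerates at $E_1$ and $\mathbb{H}^i(W,G^\cdot)=\mathcal{H}^i(F^\cdot)$.

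Next, I would include $G^\cdot$ as a subcomplex of the shifted Koszul complex
\[
C^\cdot := \bigl[\Omega_X^n\otimes\mathcal{L}\boxtimes\mathcal{O}(j-n)\to\cdots\to\Omega_X\otimes\mathcal{L}\boxtimes\mathcal{O}(j-1)\to\mathcal{L}\boxtimes\mathcal{O}(j)\bigr]
\]
in degrees $[j-n,j]$. Since $M$ is lci, this complex resolves $(\mathcal{L}\boxtimes\mathcal{O}(j))|_M$ placed in degree $j$, so $\mathbb{H}^i(W,C^\cdot)=H^{i-j}(M,(\mathcal{L}\boxtimes\mathcal{O}(j))|_M)$. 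The quotient $Q^\cdot:=C^\cdot/G^\cdot$ consists of the terms of $C^\cdot$ in degrees $p\in[j-n,-1]$; in this range $H^q(\mathbb{P}^m,\mathcal{O}(p))$ is concentrated in $q=m$ and nonzero only for $p\leq -m-1$, so combined with Bott on $X$ all nonzero contributions to the hypercohomology spectral sequence for $Q^\cdot$ sit in total degree $p+m\leq -1$. Hence $\mathbb{H}^i(W,Q^\cdot)=0$ for $i\geq 0$.

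Finally, the short exact sequence $0\to G^\cdot\to C^\cdot\to Q^\cdot\to 0$ induces a long exact sequence in hypercohomology. Taking $G$-invariants (exact, as $G$ is reductive) gives
\[
\mathcal{H}^i(F^\cdot)^G \;\cong\; H^{i-j}(M,(\mathcal{L}\boxtimes\mathcal{O}(j))|_M)^G \qquad \text{for all } i\geq 1,
\]
which vanishes for $1\leq i\leq j-1$ by degree reasons and reduces to $H^0(M,(\mathcal{L}\boxtimes\mathcal{O}(j))|_M)^G$ for $i=j$, which is zero under the extra hypothesis. The hard part is the vanishing of $\mathbb{H}^i(W,Q^\cdot)$ in the required range, which crucially uses the two-row structure of the cohomology of line bundles on $\mathbb{P}^m$; once this is in place, the rest is bookkeeping with the long exact sequence.
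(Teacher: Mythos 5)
Your proposal is correct and follows essentially the same route as the paper: both reduce via Corollary \ref{can use teleman} and Lemma \ref{F computes hypercohomology} to the cohomology of $F^\cdot$, and both exploit the twisted Koszul resolution of $M$ on $X\times\mathbb{P}(\mathfrak{g})$ together with Bott vanishing on $X$ and the two-row cohomology of line bundles on $\mathbb{P}^m$. The only difference is packaging — you split off the brutal truncation and run a long exact sequence where the paper reads the same vanishing off a single hypercohomology spectral sequence of the augmented complex — and your identification $\mathcal{H}^i(F^\cdot)\cong H^{i-j}(M,(\mathcal{L}\boxtimes\mathcal{O}(j))|_M)$ for $i\geq 1$ is exactly the content of the paper's degeneration argument.
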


\begin{proof}
From Corollary \ref{can use teleman}, we know $H^i(\mathfrak{X}^{ss},\Lambda^j L_{\mathfrak{X}^{ss}}\otimes \mathcal{L})=H^i(\mathfrak{X},\Lambda^j L_{\mathfrak{X}}\otimes \mathcal{L})$. Therefore,
by Lemma \ref{F computes hypercohomology}, it suffices to show $\mathcal{H}^i(F^\cdot)^G=0$ for $0<i<j$. Since the Koszul complex $K_s^\cdot$ is exact, all its hypercohomologies vanish. The same is true for the complex $\tilde{K}_s^\cdot=K_s^\cdot \otimes (\mathcal{L} \boxtimes \mathcal{O}_{\mathbb{P}(\mathfrak{g})}(j))$. Take the associated spectral sequence $E_1^{p,q}=H^q(X\times \mathbb{P}(\mathfrak{g}),\tilde{K}_s^p)$, converging to $H^{p+q}(X\times \mathbb{P}(\mathfrak{g}),\tilde{K}_s^\cdot)=0$. Since $X$ satisfies Bott vanishing, we have
\begin{align*}
H^q(X\times \mathbb{P}(\mathfrak{g}),\tilde{K}_s^p)=
\begin{cases}
H^0(X,\Omega_X^{-p} \otimes \mathcal{L}) \otimes S^{p+j}\mathfrak{g}^\vee & \text{if} \quad q=0,\: -j\leq p\leq 0 \\
H^0(X,\Omega_X^{-p} \otimes \mathcal{L}) \otimes H^m(\mathbb{P}^m,\mathcal{O}_{\mathbb{P}^m}(j+p))^\vee & \text{if} \quad q=m,\: p\leq -j-m-1 \\
H^q(M,(\mathcal{L} \boxtimes \mathcal{O}_{\mathbb{P}^m}(j))|_M) & \text{if} \quad p=1 \\
0 & \text{otherwise,}
\end{cases}
\end{align*}
and the sequence has the following shape:
\[
\begin{tikzcd}[cramped,sep=small]
\cdots\arrow[r] &E_1^{-j-m-2,m}\arrow[r] &E_1^{-j-m-1,m}\arrow[r]&0\arrow[r] &\cdots & & &\cdots \arrow[r] &0 \arrow[r]  &E_1^{1,m}\\
 & & \cdots\arrow[r] &0\arrow[r] &\cdots & & &\cdots \arrow[r] &0 \arrow[r] &E_1^{1,m-1}\\
 & &                 &\vdots     &       & & &                 & \vdots     &\vdots \\
 & & \cdots\arrow[r] &0\arrow[r] &\cdots & & &\cdots \arrow[r] &0 \arrow[r] &E_1^{1,1}\\
 & & \cdots\arrow[r] &0\arrow[r] &\cdots\arrow[r] & 0 \arrow[r] &E_1^{-j,0} \arrow[r]  &\cdots \arrow[r] &E_1^{0,0}\arrow[r] &E_1^{1,0}.
\end{tikzcd}
\]

Note that the complex $F^\cdot$ is precisely the naive truncation of the shifted complex $E_1^{\cdot,0}[-j]$ obtained by omitting the last term, since the differentials are determined precisely by the section $s:\Omega_X \rightarrow \mathfrak{g}^\vee$.

From the description of the spectral sequence, we see that for $q=0$ and $-j+1 \leq p \leq 0$, it degenerates at $E_2$ and we get $0=H^{i-j}(X\times \mathbb{P}(\mathfrak{g}),\tilde{K}_s^\cdot)=\mathcal{H}^i(F^\cdot)$, for $1\leq i < j$ (even before taking invariants). By the same reason we find that indeed $H^q(M,(\mathcal{L} \boxtimes \mathcal{O}_{\mathbb{P}^m}(j))|_M)=0$ if $q>0$, although we do not need this. Now if, further, $\mathcal{L} \boxtimes \mathcal{O}_{\mathbb{P}(\mathfrak{g})}(j)|_M$ has no invariant global sections, then the complex of $G$-invariants $(E_1^{\cdot,0}[-j])^G$ is precisely $(F^\cdot)^G$, so in this case, $\mathcal{H}^j((F^\cdot)^G)=0$. Since taking invariant sections is an exact functor, this is the same as saying that $\mathcal{H}^j(F^\cdot)^G=0$.
\end{proof}

\subsection{Vanishing on the GIT quotient}

Observe that Proposition \ref{if lci} applies to the quotient stack $\mathfrak{X}^{ss}$. Now, if the action of $G$ on $X^{ss}$ is free, this result can be interpreted in terms of its coarse moduli space, namely the GIT quotient $Y=X\git_\mathcal{L}G$. Indeed, if $G$ acts freely on $X^{ss}$, then $\Lambda^j L_{\mathfrak{X}^{ss}}$ descends to $\Omega^j_Y$, as observed in \S \ref{git section}. Freeness of the action also implies that $\mathcal{L}$ descends to a line bundle $L$ on $Y$. By exactness of $p_*=\pi_*(\cdot)^G$, we have $H^i(Y,\Omega^j_Y\otimes L)=H^i(\mathfrak{X}^{ss},\Lambda^j L_{\mathfrak{X}^{ss}}\otimes \mathcal{L})$, so if the latter vanishes, then so does $H^i(Y,\Omega^j_Y\otimes L)$.

In the case of a quotient $Y=(\mathbb{P}^1)^n\git_\mathcal{L}PGL_2$ without strictly semi-stable locus, we can check that the hypotheses of Proposition \ref{if lci} are satisfied, so $H^i(Y,\Omega^j_Y\otimes L)=0$ for $i\neq 0,j$, where $L$ is the descent of the linearization $\mathcal{L}$. In Section \ref{main section}, we will see that, in order to show Bott vanishing for $(\mathbb{P}^1)^n\git_\mathcal{L}PGL_2$, the only line bundle we need to consider is precisely the descent of $\mathcal{L}$. The rest of that section will be devoted to prove the vanishing of the $j$-th cohomology.

More generally, when the action is not free on $X^{ss}$ and the coarse moduli space $Y$ is not smooth, we introduce the following notation.

\begin{notation}\label{notation coarse omega}
Let $Y^0 \subset Y$ be the nonsingular locus, with inclusion $\imath:Y^0 \hookrightarrow Y$. Then for each $j$, $\Omega^j_Y$ will denote the (non-derived) pushforward $\imath_*(\Omega^j_{Y^0})$,  known as the sheaf of reflexive differentials. We call $X'\subset X^{ss}$ the locus where $G$ acts freely.
\end{notation}

Note that $X'\subset\pi^{-1}(Y^0)$, where $\pi:X^{ss}\rightarrow Y$ is the quotient map. Recall $\Lambda^j L_{\mathfrak{X}^{ss}}$ is a vector bundle provided $X^{ss}=X^s$. We are interested in the cases when $\pi_*(\Lambda^j L_{\mathfrak{X}^{ss}})^G=\Omega^j_Y$. Suppose this holds and $\mathcal{L}$ descends to $L$, that is, there is a $G$-equivariant isomorphism $\pi^*(L)\cong\mathcal{L}|_{\mathfrak{X}^{ss}}$. In this situation, the projection formula yields $\pi_*(\Lambda^j L_{\mathfrak{X}^{ss}}\otimes\mathcal{L})^G=\Omega^j_Y\otimes L$ and Proposition \ref{if lci} can be interpreted as vanishing of cohomologies $H^i(Y,\Omega_Y^j\otimes L)$.

\begin{proposition}\label{cohomology GIT}
With the notation as above, suppose $X^{ss}=X^s$ and $X^{ss}\backslash X'\subset X^{ss}$ has codimension at least $2$. If $\mathcal{L}$ descends to a line bundle $L$ on $Y$, then $H^i(Y,\Omega^j_Y\otimes L)=H^i(\mathfrak{X}^{ss},\Lambda^j L_{\mathfrak{X}^{ss}}\otimes\mathcal{L})$ for every $i,j$.
\end{proposition}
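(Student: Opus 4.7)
The plan is to identify $p_*(\Lambda^jL_{\mathfrak{X}^{ss}}\otimes\mathcal{L})$ with $\Omega^j_Y\otimes L$ on $Y$ and then apply the Leray spectral sequence. Since $X^{ss}=X^s$, the coarse moduli map $p\colon\mathfrak{X}^{ss}\to Y$ is finite, hence affine, and so $R^ip_*=0$ on coherent sheaves for $i>0$. Leray then collapses to give $H^i(\mathfrak{X}^{ss},\Lambda^jL_{\mathfrak{X}^{ss}}\otimes\mathcal{L})=H^i(Y,p_*(\Lambda^jL_{\mathfrak{X}^{ss}}\otimes\mathcal{L}))$; combined with the projection formula applied to $\mathcal{L}\cong p^*L$, this reduces the claim to proving $p_*\Lambda^jL_{\mathfrak{X}^{ss}}\cong\Omega^j_Y$.

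Next, let $U=\pi(X')\subseteq Y$. Because $X'$ is a saturated $G$-invariant open subset of $X^{ss}$ and $\pi$ is a good quotient, $U$ is open and $X'=\pi^{-1}(U)$; the freeness of $G$ on $X'$ gives $p|_{[X'/G]}\colon[X'/G]\xrightarrow{\sim}U$, so $U\subseteq Y^0$. By \S\ref{git section}, the restriction $\Lambda^jL_{\mathfrak{X}^{ss}}|_{[X'/G]}$ is isomorphic to $p^*\Omega^j_U$, hence $(p_*\Lambda^jL_{\mathfrak{X}^{ss}})|_U\cong\Omega^j_U$. Since stabilizers on $X^{ss}$ are finite, every fiber of $\pi$ has dimension $\dim G$; the hypothesis $\codim_{X^{ss}}(X^{ss}\setminus X')\geq 2$ therefore transports to $\codim_Y(Y\setminus U)\geq 2$.

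The core step is to extend the isomorphism from $U$ to all of $Y$ via Hartogs' theorem, applied twice. Because $X^{ss}=X^s$, the complex $\Lambda^jL_{\mathfrak{X}^{ss}}$ is quasi-isomorphic to a $G$-equivariant locally free sheaf on the smooth variety $X^{ss}$, via the short exact sequence in \S\ref{git section}. For any open $V\subseteq Y$, $\pi^{-1}(V)$ is smooth and $\pi^{-1}(V)\setminus X'$ has codimension $\geq 2$ there, so sections of $\Lambda^jL_{\mathfrak{X}^{ss}}$ extend uniquely from $\pi^{-1}(V)\cap X'$ to $\pi^{-1}(V)$. Taking $G$-invariants yields $p_*\Lambda^jL_{\mathfrak{X}^{ss}}=j_*\Omega^j_U$, where $j\colon U\hookrightarrow Y$. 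A second application of Hartogs on the smooth variety $Y^0$ (using $U\subseteq Y^0$ with codimension-$2$ complement and $\Omega^j_{Y^0}$ locally free) gives $(j_0)_*\Omega^j_U=\Omega^j_{Y^0}$ for $j_0\colon U\hookrightarrow Y^0$, and pushing forward by $\imath\colon Y^0\hookrightarrow Y$ produces $j_*\Omega^j_U=\imath_*\Omega^j_{Y^0}=\Omega^j_Y$. The key place where the codimension hypothesis enters is precisely in ensuring that both Hartogs extensions apply; once that is in place, the rest of the argument is formal manipulation of pushforwards.
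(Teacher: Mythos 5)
Your proof is correct and follows essentially the same route as the paper's: identify $p_*(\Lambda^j L_{\mathfrak{X}^{ss}})$ with $\Omega^j_{Y'}$ on the image $Y'=\pi(X')$ of the free locus, transport the codimension-$2$ condition down to $Y$ via equidimensionality of $\pi$, and extend across the complement. The only (harmless) variation is in the extension step: you apply Hartogs upstairs to the locally free sheaf $\Lambda^j L_{\mathfrak{X}^{ss}}$ on the smooth $X^{ss}$ and then take invariants, whereas the paper works downstairs on $Y$ using normality ($\iota_*\mathcal{O}_{Y'}=\mathcal{O}_Y$) together with the projection formula for the open immersion.
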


\begin{proof}
Let $Y'=\pi(X')$ and consider the open inclusions $X'\hookrightarrow X^{ss}$ and $\iota:Y'\hookrightarrow Y$,
\[
\begin{tikzcd}[cramped,sep=scriptsize]
X'\arrow[hook]{r}\arrow{d}{\pi} &X^{ss}\arrow{d}{\pi} \\
Y'\arrow[hook]{r}{\iota} &Y \\
\end{tikzcd}
\]
where $Y'\subset Y^0\subset Y$ and $X'\subset \pi^{-1}(Y^0)\subset X^{ss}$. We first observe that, since $X^{ss}=X^{s}$, $\pi$ is equidimensional and so $Y\backslash Y'\subset Y$ has codimension at least $2$. The same is true for $Y^0\backslash Y'\subset Y^0$. Write $\iota$ as a composition
\begin{align*}
Y'\overset{\imath'}\hookrightarrow Y^0\overset{\imath}\hookrightarrow Y.
\end{align*}
By smoothness of $X$, $Y$ has to be normal, and then we see that $\iota_*\mathcal{O}_{Y'}=\mathcal{O}_{Y}$, while $\imath'_*\Omega^j_{Y'}=\Omega^j_{Y^0}$, by the codimension condition. Using $\iota=\imath\circ\imath'$, we get $\iota_*\Omega^j_{Y'}=\Omega^j_Y$.

$G$ acts freely on $X'$, so we have a $G$-equivariant short exact sequence $0\rightarrow \pi^*\Omega_{Y'}\rightarrow \Omega_{X'}\rightarrow \mathfrak{g}^\vee\rightarrow 0$. Therefore, the restriction of $L_{\mathfrak{X}^{ss}}$ to $X'$ descends to $\Omega_{Y'}$, and similarly for their $j$-th exterior powers. We then have $\pi_*(\Lambda^j L_{\mathfrak{X}^{ss}}|_{X'})^G=\Omega^j_{Y'}$. On the other hand, it is not difficult to see that $\pi_*(\Lambda^j L_{\mathfrak{X}^{ss}}|_{X'})^G=\pi_*(\Lambda^j L_{\mathfrak{X}^{ss}})^G|_{Y'}$. That is,
\begin{align}\label{omega git}
\iota^*\pi_*(\Lambda^j L_{\mathfrak{X}^{ss}})^G=\Omega^j_{Y'}.
\end{align}
Using the projection formula on (\ref{omega git}) and the fact that $\iota_*\mathcal{O}_{Y'}=\mathcal{O}_{Y}$, we get $\pi_*(\Lambda^j L_{\mathfrak{X}^{ss}})^G=\iota_*\Omega^j_{Y'}=\Omega^j_Y$. Therefore,
\begin{align*}
\pi_*(\Lambda^j L_{\mathfrak{X}^{ss}}\otimes \mathcal{L})^G=\Omega^j_Y \otimes L
\end{align*}
again by the projection formula. By exactness of $p_*=\pi_*(\cdot)^G$, we obtain $H^i(Y,\Omega^j_Y\otimes L)=H^i(\mathfrak{X}^{ss},\Lambda^j L_{\mathfrak{X}^{ss}}\otimes \mathcal{L})$.
\end{proof}

\begin{remark}\label{remark X'}
If $X^{ss}=X^s$ and the action is free on $\pi^{-1}(Y^0)$, we have that $X'=\pi^{-1}(Y^0)$ and the condition on the codimension of $X^{ss}\backslash X'\subset X^{ss}$ is automatically satisfied. Indeed, $Y\backslash Y^0\subset Y$ has codimension $\geq 2$ by normality of $Y$. Equidimensionality of $\pi$ guarantees that the same is true for $X^{ss}\backslash\pi^{-1}(Y^0)\subset X^{ss}$.
\end{remark}

\section{The case of $X$ affine and $G$ abelian}\label{abelian section}

In the case that $G$ is an abelian group and $X$ is a smooth affine variety, we get a stronger version of Proposition \ref{if lci}, provided there is no strictly semi-stable locus. To do this, we apply very similar techniques to the ones used in Section \ref{koszul section}. The difference is that, in this case, we can take advantage of Corollary \ref{can use teleman} by working on the semi-stable locus from the beginning. Also, an affine variety $X$ automatically satisfies the Bott vanishing condition.

As usual, $\mathcal{L}$ denotes a $G$-linearized ample line bundle on a smooth projective-over-affine variety $X$ with a $G$-action. Consider the augmented Koszul complex $K_s^\cdot$ on $X\times \mathbb{P}(\mathfrak{g})$ defined in (\ref{K_s}), and let ${\bar{K}_s}^\cdot$ be its restriction to $X^{ss}\times \mathbb{P}(\mathfrak{g})$. We first observe that this restriction is exact if $X^{ss}=X^s$. This is because the projection of $M$ to $X$ lands entirely on the unstable locus.

\begin{lemma}\label{M empty abelian}
If $X^{ss}=X^s$, then $M\cap (X^{ss}\times \mathbb{P}(\mathfrak{g}))=\emptyset$. In particular, the restriction ${\bar{K}_s}^\cdot$ is acyclic in this case.
\end{lemma}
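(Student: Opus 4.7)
The plan is to unpack the geometric meaning of $M$ and compare it with the GIT characterization of stability. The section $s$ arises by differentiating the action, so the map $\mathfrak{g}\otimes\mathcal{O}_X\to T_X$ sends $v\in\mathfrak{g}$ to the fundamental vector field it generates. Reinterpreting $s$ as a section of $T_X\boxtimes\mathcal{O}_{\mathbb{P}(\mathfrak{g})}(1)$, a point $(x,[v])\in X\times\mathbb{P}(\mathfrak{g})$ will lie in $M$ precisely when $v$ belongs to the Lie algebra $\mathfrak{g}_x$ of the stabilizer $G_x\subset G$. First I would record this identification carefully, so that the scheme-theoretic fiber of $M$ over $x$ is exactly $\mathbb{P}(\mathfrak{g}_x)$, with the convention that this is empty when $\mathfrak{g}_x=0$.

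For the first assertion, I would then invoke the standard GIT fact that every stable point has finite stabilizer. Under the hypothesis $X^{ss}=X^s$, this applies to every $x\in X^{ss}$, yielding $\mathfrak{g}_x=0$ and therefore no $[v]\in\mathbb{P}(\mathfrak{g})$ with $(x,[v])\in M$. This immediately gives $M\cap(X^{ss}\times\mathbb{P}(\mathfrak{g}))=\emptyset$, and in particular the abelian hypothesis of the section is not actually needed for this step.

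For the acyclicity of $\bar{K}_s^\cdot$, the point is that $s$ is nowhere vanishing on $X^{ss}\times\mathbb{P}(\mathfrak{g})$, so locally one may trivialize $T_X\boxtimes\mathcal{O}_{\mathbb{P}(\mathfrak{g})}(1)$ and complete $s$ to a frame, whereupon the Koszul complex becomes the standard Koszul resolution associated to a regular sequence of length equal to the rank of the bundle, which is exact. The only delicate point I anticipate, and hence the main place needing care, is the scheme-theoretic (as opposed to set-theoretic) identification of $M$ with the infinitesimal fixed locus, which is needed so that emptiness of the underlying set guarantees the section is nonvanishing as a regular section; everything else is a formal consequence.
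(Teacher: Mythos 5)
Your proposal is correct and follows essentially the same route as the paper: identify the fiber of $M$ over $x$ with $\mathbb{P}(\mathfrak{g}_x)$, note that stable points have finite stabilizers so $\mathfrak{g}_x=0$ on $X^{ss}=X^s$, and conclude exactness of the restricted Koszul complex from the emptiness of the zero locus. Your observation that the abelian hypothesis is not needed here is also consistent with the paper, which states the lemma before imposing that assumption.
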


\begin{proof}
For a pair $(x,l)$ in $M$, $l$ must be a line in $\mathfrak{g}=\mathbb{A}^{m+1}$ contained in the Lie algebra of the stabilizer $G_x$, so $x$ cannot be stable. By the assumption, $x \notin X^{ss}$. As a consequence, the restriction of $K_s^\cdot$ to $X^{ss}\times \mathbb{P}(\mathfrak{g})$ is acyclic since $M \cap (X^{ss}\times \mathbb{P}(\mathfrak{g}))=\emptyset$ is a local complete intersection.
\end{proof}

Now suppose $G$ is abelian and $X$ is affine. Let $\bar{F}^\cdot$ the complex of global sections of $\Lambda^j L_{\mathfrak{X}^{ss}} \otimes \mathcal{L}$,
\begin{align} \label{koszul F abelian}
\bar{F}^\cdot=\left[ 0\rightarrow H^0(X^{ss},\Omega^j_{X^{ss}} \otimes \mathcal{L}) \rightarrow H^0(X^{ss},\Omega_{X^{ss}}^{j-1} \otimes \mathcal{L})\otimes \mathfrak{g}^\vee \rightarrow \cdots \rightarrow H^0(X^{ss},\mathcal{L})\otimes S^j\mathfrak{g}^\vee\rightarrow 0\right],
\end{align}
concentrated in degrees $0$ to $j$. Using a similar argument to the one in Lemma \ref{F computes hypercohomology}, we can show the complex of invariants $(\bar{F}^\cdot)^G$ computes the hypercohomologies of $\Lambda^j L_{\mathfrak{X}^{ss}}\otimes \mathcal{L}$.

\begin{lemma} \label{F computes hypercohomology abelian}
If $G$ is abelian and $X$ is affine, we have $H^i(\mathfrak{X}^{ss},\Lambda^j L_{\mathfrak{X}^{ss}}\otimes \mathcal{L})=\mathcal{H}^i(\bar{F}^\cdot)^G$.
\end{lemma}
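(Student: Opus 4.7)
The plan is to mimic the proof of Lemma \ref{F computes hypercohomology}, replacing the use of Bott vanishing on the ambient $X$ with two ingredients available in the present setting: (a) affinity of $X$, which makes higher coherent cohomology on $X$ vanish; and (b) the abelian case of Corollary \ref{can use teleman}, which transfers that vanishing from $\mathfrak{X}$ to $\mathfrak{X}^{ss}$ term by term in the complex $\Lambda^j L_{\mathfrak{X}^{ss}}\otimes\mathcal{L}$.

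Concretely, I would first show that each individual term $(\Lambda^j L_{\mathfrak{X}^{ss}})^p\otimes \mathcal{L}=\Omega_{X^{ss}}^{j-p}\otimes S^p\mathfrak{g}^\vee\otimes \mathcal{L}$ has vanishing higher equivariant cohomology on $\mathfrak{X}^{ss}$. Since $X$ is affine, for any coherent sheaf $\mathcal{F}$ on $X$ we have $H^q(X,\mathcal{F})=0$ for $q>0$, and since $G$ is reductive, $G$-invariants is exact, so $H^q(\mathfrak{X},\mathcal{F})=H^q(X,\mathcal{F})^G=0$ for $q>0$. Applying this to $\mathcal{F}=(\Lambda^j L_{\mathfrak{X}})^p\otimes \mathcal{L}$ and invoking the abelian statement of Corollary \ref{can use teleman}, I obtain
\begin{align*}
H^q\bigl(\mathfrak{X}^{ss},(\Lambda^j L_{\mathfrak{X}^{ss}})^p\otimes \mathcal{L}\bigr)=H^q\bigl(\mathfrak{X},(\Lambda^j L_{\mathfrak{X}})^p\otimes \mathcal{L}\bigr)=0\qquad\text{for all }q>0.
\end{align*}

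With this in hand, the rest of the argument is essentially a copy of Lemma \ref{F computes hypercohomology}. I would take a bi-complex resolving $\Lambda^j L_{\mathfrak{X}^{ss}}\otimes \mathcal{L}$ by $G$-equivariant injectives, producing a spectral sequence
\begin{align*}
E_1^{p,q}=H^q\bigl(\mathfrak{X}^{ss},(\Lambda^j L_{\mathfrak{X}^{ss}})^p\otimes \mathcal{L}\bigr)\Longrightarrow H^{p+q}\bigl(\mathfrak{X}^{ss},\Lambda^j L_{\mathfrak{X}^{ss}}\otimes \mathcal{L}\bigr).
\end{align*}
By the vanishing above, only the row $q=0$ survives, so the sequence degenerates at $E_2$ and yields $H^p(\mathfrak{X}^{ss},\Lambda^j L_{\mathfrak{X}^{ss}}\otimes \mathcal{L})=\mathcal{H}^p(E_1^{\cdot,0})$. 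Identifying $E_1^{p,0}=\bigl(H^0(X^{ss},\Omega_{X^{ss}}^{j-p}\otimes\mathcal{L})\otimes S^p\mathfrak{g}^\vee\bigr)^G=\bigl((\bar{F}^\cdot)^p\bigr)^G$, the row $E_1^{\cdot,0}$ is exactly $(\bar{F}^\cdot)^G$, and exactness of $G$-invariants gives $\mathcal{H}^p((\bar{F}^\cdot)^G)=\mathcal{H}^p(\bar{F}^\cdot)^G$, completing the identification.

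The only step that requires any care is verifying that the abelian half of Corollary \ref{can use teleman} indeed applies to the individual terms $\Omega_{X^{ss}}^{j-p}\otimes S^p\mathfrak{g}^\vee\otimes\mathcal{L}$ and not merely to the total complex; this is stated in that corollary, so no additional obstacle arises. The rest is a mechanical adaptation of the spectral sequence from Lemma \ref{F computes hypercohomology}, with $X^{ss}$ replacing $X$.
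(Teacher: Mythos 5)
Your proposal is correct and follows essentially the same route as the paper: apply the abelian case of Corollary \ref{can use teleman} to each term $(\Lambda^j L_{\mathfrak{X}})^p\otimes\mathcal{L}$, use affinity of $X$ together with exactness of $G$-invariants to kill the higher cohomology, and then let the hypercohomology spectral sequence degenerate onto the row $E_1^{\cdot,0}=(\bar{F}^\cdot)^G$. No gaps; the identification $\mathcal{H}^i((\bar{F}^\cdot)^G)=\mathcal{H}^i(\bar{F}^\cdot)^G$ via exactness of invariants is exactly as in the paper.
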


\begin{proof}
First, we observe that by Corollary \ref{can use teleman},
using that $G$ is abelian, $H^i(\mathfrak{X}^{ss},(\Lambda^j L_{\mathfrak{X}^{ss}})^p\otimes \mathcal{L})=H^i(\mathfrak{X},(\Lambda^j L_{\mathfrak{X}})^p\otimes \mathcal{L})$. For $i>0$ this is zero since $X$ is affine. Now take the spectral sequence $E_1^{p,q}=H^q(\mathfrak{X}^{ss},(\Lambda^j L_{\mathfrak{X}^{ss}})^p\otimes \mathcal{L})$, which converges to $H^{p+q}(\mathfrak{X}^{ss},\Lambda^j L_{\mathfrak{X}^{ss}}\otimes \mathcal{L})$. By the previous observation, we see that $E_1^{p,q}=0$ for $q\neq 0$, and so $H^i(\mathfrak{X}^{ss},\Lambda^j L_{\mathfrak{X}^{ss}}\otimes \mathcal{L})=\mathcal{H}^i(E_1^{\cdot,0})$ for every $i$. But the complex $E_1^{\cdot,0}$ is precisely $(\bar{F}^\cdot)^G$.
\end{proof}

Following the ideas from Proposition \ref{if lci}, we obtain the following vanishing result. Here $\Omega^j_Y$ and $X'$ are as in Notation \ref{notation coarse omega}. 

\begin{theorem} \label{theorem bott for abelian}
Suppose $G$ is abelian, $X$ is affine and $X^{ss}=X^s$. Then $H^i(\mathfrak{X}^{ss},\Lambda^j L_{\mathfrak{X}^{ss}}\otimes \mathcal{L})=0$ for every $i>0,j\geq 0$. Further, if $X^{ss}\backslash X'$ has codimension at least $2$ and $\mathcal{L}$ descends to $L$, we have
\begin{align*}
H^i(Y,\Omega^j_Y\otimes L)=0\quad\forall i>0,j\geq 0.
\end{align*}
\end{theorem}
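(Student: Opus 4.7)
The plan is to mimic the Koszul-complex spectral-sequence argument of Proposition \ref{if lci}, but carry it out directly on $X^{ss}\times\mathbb{P}(\mathfrak{g})$. Because $X^{ss}=X^s$, Lemma \ref{M empty abelian} tells us that the restricted Koszul complex $\bar{K}_s^\cdot$ is already acyclic there, which will let us bootstrap to vanishing in every $i>0$ rather than only $i\neq 0,j$. The second assertion of the theorem will follow immediately from Proposition \ref{cohomology GIT} once the quotient-stack statement is established.

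By Lemma \ref{F computes hypercohomology abelian}, it suffices to show $\mathcal{H}^i(\bar{F}^\cdot)^G=0$ for $i>0$. Tensoring $\bar{K}_s^\cdot$ with $\mathcal{L}\boxtimes\mathcal{O}_{\mathbb{P}(\mathfrak{g})}(j)$ keeps it acyclic, and the associated equivariant hypercohomology spectral sequence converges to $0$:
\[
E_1^{p,q}=H^q_G\bigl(X^{ss}\times\mathbb{P}(\mathfrak{g}),\,\Omega^{-p}_{X^{ss}}\otimes\mathcal{L}\boxtimes\mathcal{O}_{\mathbb{P}(\mathfrak{g})}(p+j)\bigr)\Rightarrow 0.
\]
Since $G$ is abelian it acts trivially on $\mathbb{P}(\mathfrak{g})$, so the Künneth formula together with the exactness of $G$-invariants gives
\[
E_1^{p,q}=\bigoplus_{q_1+q_2=q}H^{q_1}(\mathfrak{X}^{ss},\Omega^{-p}_{X^{ss}}\otimes\mathcal{L})\otimes H^{q_2}(\mathbb{P}^m,\mathcal{O}_{\mathbb{P}^m}(p+j)).
\]

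The abelian half of Corollary \ref{can use teleman}, applied individually to each sheaf $\Omega^{-p}_{X^{ss}}\otimes\mathcal{L}$, identifies the first factor with $H^{q_1}(X,\Omega^{-p}_X\otimes\mathcal{L})^G$, which vanishes for $q_1>0$ by the affineness of $X$. Hence $E_1$ is concentrated in the rows $q=0$ (nonzero for $-j\leq p\leq 0$) and $q=m$ (nonzero for $p\leq -j-m-1$), and the bottom row coincides with the shifted complex $(\bar{F}^\cdot)^G[-j]$. The vertical gap $m$ between the two rows kills every $d_r$ with $r\geq 2$ at positions $(p,0)$ with $-j<p\leq 0$; the only potential exception is the differential $d_{m+1}\colon E_{m+1}^{-j-m-1,m}\to E_{m+1}^{-j,0}$, whose target column $p=-j$ corresponds to the degree $i=0$ that we do not need to kill. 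Convergence to $0$ then gives $\mathcal{H}^{p+j}(\bar{F}^\cdot)^G=E_\infty^{p,0}=0$ for $-j<p\leq 0$, i.e.\ $\mathcal{H}^i(\bar{F}^\cdot)^G=0$ for $0<i\leq j$; outside $[0,j]$ the complex $\bar{F}^\cdot$ is zero. Finally, Proposition \ref{cohomology GIT}, combined with the codimension hypothesis on $X^{ss}\setminus X'$ and the descent of $\mathcal{L}$ to $L$, translates this vanishing into $H^i(Y,\Omega^j_Y\otimes L)=0$ for all $i>0$ and $j\geq 0$.

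The main obstacle is the careful bookkeeping of differentials near the left edge of the spectral sequence: one must verify that the only higher-page differential that can in principle enter the bottom row, namely $d_{m+1}$ from the top row, lands at the column $p=-j$ and therefore affects only the (unneeded) degree $i=0$, so that the cohomologies $\mathcal{H}^i(\bar{F}^\cdot)^G$ for $0<i\leq j$ can indeed be read off directly from $E_2^{\cdot,0}$.
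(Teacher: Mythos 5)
Your proposal is correct and follows essentially the same route as the paper: restrict the Koszul complex to $X^{ss}\times\mathbb{P}(\mathfrak{g})$, use Lemma \ref{M empty abelian} for acyclicity, kill all higher rows of the spectral sequence via the abelian case of Corollary \ref{can use teleman} plus affineness of $X$, read off $\mathcal{H}^i(\bar F^\cdot)^G=0$ from convergence to zero, and finish with Lemma \ref{F computes hypercohomology abelian} and Proposition \ref{cohomology GIT}. Your explicit check that the only possible higher differential into the bottom row is $d_{m+1}$ landing in the column $p=-j$ (i.e.\ degree $i=0$) is a slightly more careful justification of what the paper summarizes as degeneration at $E_2$ in the relevant range.
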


\begin{proof}
Let ${\bar{K}_s}^\cdot$ be the restriction of $K_s^\cdot$ to $X^{ss}\times \mathbb{P}(\mathfrak{g})$. By Lemma \ref{M empty abelian}, ${\bar{K}_s}^\cdot$ is an acyclic complex, being the (augmented) Koszul resolution of $M \cap (X\times \mathbb{P}(\mathfrak{g}))=\emptyset$. 

Take now the spectral sequence $E_1^{p,q}=H^q(X^{ss}\times \mathbb{P}(\mathfrak{g}),{\bar{K}_s}^p\otimes \mathcal{L})^G$, converging to $H^{p+q}(X^{ss}\times \mathbb{P}(\mathfrak{g}),{\bar{K}_s}^\cdot\otimes\mathcal{L})^G=0$. Since $H^i({X}^{ss},(\Lambda^j L_{\mathfrak{X}^{ss}})^p\otimes \mathcal{L})^G=0$ for $i>0$, we find
\begin{align*}
E_1^{p,q}=
\begin{cases}
(H^0(X^{ss},\Omega_{X^{ss}}^{-p} \otimes \mathcal{L}) \otimes S^{p+j}\mathfrak{g}^\vee)^G & \text{if} \quad q=0,\: -j\leq p\leq 0 \\
(H^0(X^{ss},\Omega_{X^{ss}}^{-p} \otimes \mathcal{L}) \otimes H^m(\mathbb{P}^m,\mathcal{O}_{\mathbb{P}^m}(j+p))^\vee)^G & \text{if} \quad q=m,\: p\leq -j-m-1 \\
0 & \text{otherwise.}
\end{cases}
\end{align*}
Note that the complex of invariants $(\bar{F}^\cdot)^G$ is precisely the shifted complex $E_1^{\cdot,0}[-j]$. For $q=0$ and $-j+1 \leq p \leq 0$, the sequence degenerates at $E_2$ and we get $0=H^{i-j}(X^{ss}\times \mathbb{P}(\mathfrak{g}),{\bar{K}_s}^\cdot\otimes\mathcal{L})=\mathcal{H}^i(\bar{F}^\cdot)^G$, for $i\geq 1$. From Lemma \ref{F computes hypercohomology abelian}, we conclude $H^i(\mathfrak{X}^{ss},\Lambda^j L_{\mathfrak{X}^{ss}}\otimes \mathcal{L})=0$ for $i>0$. The last part of the statement is a direct consequence of Proposition \ref{cohomology GIT}.
\end{proof}

\subsection{The toric case}\label{toric subsection}

Now let $Y$ be a $\mathbb{Q}$-factorial projective toric variety. From \cite{cox-coxring}, we know $Y$ is the GIT quotient of an affine space $X=\mathbb{A}^d$ by the abelian reductive group $G=\Hom (\Cl Y, \mathbb{G}_m)$, with $X^s=X^{ss}$ and with $X^{us}\subset X$ of codimension at least $2$. The character group of $G$ is canonically identified with $\Cl Y$. If we call $\Sigma$ the fan in $N \cong \mathbb{Z}^n$ determining the toric variety and $d=|\Sigma (1)|$ the number of $1$-dimensional cones, then we have a surjection $\mathbb{Z}^d=\langle e_\rho, \: \rho \in \Sigma(1) \rangle_\mathbb{Z} \twoheadrightarrow \Cl Y$ and we can write $X=\spec R$, where $R=\Bbbk[x_1,\ldots,x_d]=\bigoplus_{v \in \Cl Y} R_v$ is the Cox ring, and each graded piece $R_v \cong H^0(Y,\mathcal{O}_Y(D))$, for $v=[D]$. We have a short exact sequence
\begin{align}\label{torus ses}
0\rightarrow M \rightarrow \mathbb{Z}^d\rightarrow \Cl Y \rightarrow 0,
\end{align}
with the map on the left being $m\mapsto \sum \langle m,n_\rho \rangle e_\rho$, where $n_\rho \in N$ is the vector corresponding to the $1$-dimensional cone $\rho \in \Sigma (1)$. This way, the action of $G$ is described by the short exact sequence
\begin{align*}
1\rightarrow G \rightarrow \left(\mathbb{G}_m\right)^d\rightarrow T \rightarrow 1
\end{align*}
obtained by applying $\Hom(\cdot,\mathbb{G}_m)$ to (\ref{torus ses}). Here $T=N\otimes \mathbb{G}_m$ is the torus acting on $Y$. Using the usual description of $Y$ as $\bigcup u_\sigma$, where $u_\sigma=\spec \Bbbk[\sigma^\vee\cap M]$, this quotient is described locally by $u_\sigma=U_\sigma /G$, where $U_\sigma=\{z \in \mathbb{A}^d \mid z_\rho \neq 0\quad\forall\rho \notin \sigma(1)\}$ (see \cite{cox-coxring}, \cite[\S 14]{cox-book} for details). In the case that $Y$ is smooth, then $G=\mathbb{G}_m^{d-n}$ is a torus and the action is free on $X^{ss}$.

In what follows we will see that, from Theorem \ref{theorem bott for abelian} we can recover Bott vanishing for the toric case, over $\Bbbk$. For the remainder of the present section $X$ will denote $\mathbb{A}^d$, $G$ will denote $\Hom (\Cl Y, \mathbb{G}_m)$ and $Y=X\git_\mathcal{L} G$ will be a projective $\mathbb{Q}$-factorial toric variety obtained as a GIT quotient given by a linearization $\mathcal{L}$.

\begin{definition}\label{ample weil}
We say that a Weil divisor $D$ on $Y$ is ample if a positive multiple $mD$ is an ample Cartier divisor.
\end{definition}

We first see that, in order to show Bott vanishing, the only ample divisor we need to consider is the descent of $\mathcal{L}$ (cf. \cite[Proposition 2.9]{hu-keel}). 

\begin{lemma}\label{L=L' toric}
With the notation as above, let $D$ be an ample divisor on $Y$. Then $\mathcal{O}_Y(D)$ is the descent of a linearization $\mathcal{L}'$ such that $Y=X\git_{\mathcal{L}'}G$.
\end{lemma}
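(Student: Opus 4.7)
The plan has two parts: first construct a linearization $\mathcal{L}'$ of $\mathcal{O}_X$ descending to $L$, then verify that this linearization gives $Y$ as its GIT quotient.

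For the first part, I use that $X=\mathbb{A}^d$ is affine with trivial Picard group, so every $G$-linearized line bundle on $X$ has underlying line bundle $\mathcal{O}_X$ and is classified by a character of $G$. Under the canonical identification $\Hom(G,\mathbb{G}_m)=\Cl Y$, the class $[L]\in \Pic Y\subseteq \Cl Y$ corresponds to a character $\chi$; I let $\mathcal{L}'$ be the linearization of $\mathcal{O}_X$ defined by $\chi$. To see that $\mathcal{L}'$ descends to $L$, I use the Cox ring description $R=\bigoplus_{v\in \Cl Y} R_v$, in which the $\chi$-semi-invariant sections of $\mathcal{O}_{U_\sigma}$ (for a maximal cone $\sigma\in\Sigma$) are identified with the sections of $\mathcal{O}_Y([L])$ over $u_\sigma=U_\sigma/G$. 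Globalizing this identification gives a $G$-equivariant isomorphism $\pi^*L\cong \mathcal{L}'|_{X^{ss}}$.

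For the second part, I need $X\git_{\mathcal{L}'}G=Y$, i.e. $X^{ss}(\mathcal{L}')=X^{ss}(\mathcal{L})$. The clean way to see this is via the variation-of-GIT results of Hu--Keel \cite{hu-keel}: the space $\Cl Y\otimes \mathbb{R}$ carries a chamber decomposition (the GIT fan) with the property that two characters lying in the interior of the same chamber define the same semi-stable locus in $\mathbb{A}^d$. In the toric setting, the chamber containing the class of the chosen linearization $\mathcal{L}$ coincides with the ample (nef) cone of the resulting GIT quotient $Y$. Since $L$ is ample on $Y$, the character $[L]$ lies in this same open chamber, so $\mathcal{L}'$ and $\mathcal{L}$ yield the same semi-stable locus and consequently the same GIT quotient.

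The main obstacle is the second step, namely matching the GIT chamber of $[\mathcal{L}]$ with the ample cone of $Y$. This is precisely the content of the Hu--Keel correspondence between the GIT fan of the $G$-action on $\mathbb{A}^d$ and the secondary fan, whose chambers are the nef cones of the various small $\mathbb{Q}$-factorial models. Alternatively, one can argue directly via the Hilbert--Mumford numerical criterion, observing that the irrelevant ideal cutting out $X^{us}$ is determined by the combinatorial type (chamber) of the character, together with the fact that the toric fan $\Sigma$ reconstructs from $Y$ the same combinatorial data regardless of which ample representative we choose.
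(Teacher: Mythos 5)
Your proof is correct, but its second step takes a genuinely different route from the paper's. After identifying $\Pic Y$ as a subgroup of $\Cl Y=\Pic^G X$ (which you both do, using that $X=\mathbb{A}^d$ has trivial Picard group; you trivialize locally over the $U_\sigma$, the paper uses the codimension-$2$ bound on $X^{us}$), the paper finishes by a direct computation: for the linearization $\mathcal{L}'=\mathcal{L}_{[L]}$ the invariant ring is $\bigoplus_{k\geq 0}R_{k[L]}=\bigoplus_{k\geq 0}H^0(Y,L^{\otimes k})$ by the Cox-ring grading, and since $L$ is ample, $X\git_{\mathcal{L}'}G=\proj\bigoplus_{k\geq 0}H^0(Y,L^{\otimes k})=Y$. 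No chamber structure is needed. You instead invoke the Hu--Keel correspondence between GIT chambers and the nef cones of the models, concluding that $[L]$ and $[\mathcal{L}]$ lie in the interior of the same chamber and hence define the same semistable locus. This is valid --- the paper itself flags \cite{hu-keel} as the source of the statement --- but it rests on a theorem strictly stronger than what is needed, whereas the $\proj$ argument is short and self-contained. On the other hand, your route yields slightly more: it shows $X^{ss}(\mathcal{L}')=X^{ss}(\mathcal{L})$ and that $\mathcal{L}'$ again has no strictly semistable locus, facts the paper has to extract separately (see the remark following Lemma \ref{L'=L} in the $(\mathbb{P}^1)^n$ case). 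One small point to make explicit if you keep the VGIT route: equality of semistable loci gives equality of the quotients because the good quotient of $X^{ss}$ is unique, and ampleness of $L$ places $[L]$ in the \emph{open} chamber (the ample cone), matching the openness of the chamber containing $[\mathcal{L}]$ guaranteed by $X^{ss}=X^{s}$.
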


\begin{proof}
Since $X$ is an affine space and $X^{us}$ has codimension $\geq 2$, we see $\Pic X = \Pic X^{ss}$ is trivial and the $G$-equivariant Picard group is $\Pic^G X=\Pic^G X^{ss}=\Cl Y$, the character group of $G$. The map $\Pic Y\rightarrow \Pic^G X^{ss}$, $L\mapsto \pi^*L$, is the inclusion $\Pic Y \hookrightarrow \Cl Y$. That is, for any Weil divisor $D$, the sheaf $\mathcal{O}_Y(D)$ is the descent of $\mathcal{L}_v$, which is the trivial line bundle on $X$ linearized by the character $v=[D] \in \Cl Y$. Further, given a linearization $\mathcal{L}_w$, for some $w \in \Cl Y$, we see that $R^G$ is precisely $\bigoplus_{k \geq 0} R_{kw}=\bigoplus_{k\geq 0}H^0(Y,\mathcal{O}_Y{(kD)})$, for $w=[D]$. Now if $L=\mathcal{O}_Y(mD)$, $m>0$, is an ample line bundle on $Y$, then $Y=\proj \bigoplus_{k\geq 0}H^0(Y,L^{\otimes k})\cong \proj \bigoplus_{k\geq 0}H^0(Y,\mathcal{O}_Y{(kD)})$, so that $\mathcal{O}_Y(D)$ is the descent of a linearization $\mathcal{L}' \in \Pic^G X$ such that $Y=X\git_{\mathcal{L}'}G$.
\end{proof}

We also check that the action is free on the preimage of the smooth locus $Y^0\subset Y$.

\begin{lemma}\label{free action toric}
$G$ acts freely on $\pi^{-1}(Y^0)$.
\end{lemma}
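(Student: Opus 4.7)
The plan is to check freeness locally on each smooth toric chart, and to reduce the triviality of the stabilizer to the smoothness criterion for cones. First I would recall that the smooth locus $Y^0\subset Y$ is covered by the affine opens $u_\sigma$ associated to \emph{smooth} cones $\sigma\in\Sigma$, meaning those whose primitive ray generators $\{n_\rho\}_{\rho\in\sigma(1)}$ extend to a $\mathbb{Z}$-basis of $N$. Since $\pi^{-1}(u_\sigma)=U_\sigma$, it suffices to show that $G$ acts freely on $U_\sigma$ whenever $\sigma$ is smooth.

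Next I would bound the stabilizers. Since $G\subset (\mathbb{G}_m^\vee)^d$ acts on $X=\mathbb{A}^d$ by coordinate-wise scaling and every $z\in U_\sigma$ has $z_\rho\neq 0$ for all $\rho\notin\sigma(1)$, any element $(t_\rho)\in(\mathbb{G}_m^\vee)^d$ fixing $z$ must satisfy $t_\rho=1$ for all $\rho\notin\sigma(1)$. Hence the stabilizer of $z$ in $G$ is contained in
\begin{align*}
G_\sigma:=G\cap\{(t_\rho)\in(\mathbb{G}_m^\vee)^d \mid t_\rho=1 \text{ for all } \rho\notin\sigma(1)\},
\end{align*}
and it is enough to prove $G_\sigma=1$ when $\sigma$ is smooth.

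The last step, which carries the real content, is to compute $G_\sigma$ by dualizing. Applying $\Hom(-,\mathbb{G}_m)$ to (\ref{torus ses}) identifies $G_\sigma$ with $\Hom\!\bigl(\mathrm{coker}(M\to\mathbb{Z}^{\sigma(1)}),\mathbb{G}_m\bigr)$, where the map $M\to\mathbb{Z}^{\sigma(1)}$ sends $m\mapsto(\langle m,n_\rho\rangle)_{\rho\in\sigma(1)}$. This is the $\mathbb{Z}$-linear dual of $\mathbb{Z}^{\sigma(1)}\to N$, $e_\rho\mapsto n_\rho$. For smooth $\sigma$ this latter map realizes $\mathbb{Z}^{\sigma(1)}$ as a direct summand of $N$, so its dual is surjective, the cokernel vanishes, and $G_\sigma$ is trivial. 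I do not anticipate any real obstacle: the lemma is essentially Cox's identification of the smooth locus of a toric variety with the locus where the Cox quotient is a principal $G$-bundle, translated through the exact sequence (\ref{torus ses}).
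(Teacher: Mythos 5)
Your proof is correct and follows essentially the same route as the paper's: both reduce to showing $G$ acts freely on $U_\sigma$ for a smooth cone $\sigma$, observe that a stabilizer of $z\in U_\sigma$ must be trivial on the coordinates indexed by $\rho\notin\sigma(1)$, and then invoke the fact that $\{n_\rho\}_{\rho\in\sigma(1)}$ extends to a $\mathbb{Z}$-basis of $N$. The only cosmetic difference is that you phrase the final lattice computation dually (vanishing of $\mathrm{coker}(M\to\mathbb{Z}^{\sigma(1)})$ because the dual of a split injection is surjective), whereas the paper shows the equivalent statement that $h$ restricted to $\langle e_\rho,\ \rho\notin\sigma(1)\rangle_{\mathbb{Z}}$ surjects onto $\Cl Y$ via an explicit dual-basis calculation.
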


\begin{proof}
The smooth locus of $Y$ is given by $\bigcup_{\sigma \text{ smooth}} u_\sigma$ (see e.g. \cite[Proposition 11.1.2]{cox-book}). It suffices to check that $G$ acts freely on $U_\sigma$ for $\sigma \in \Sigma$ a smooth cone. Consider the map $h:\mathbb{Z}^d\twoheadrightarrow \Cl Y$ from (\ref{torus ses}) and suppose $g \in G=\Hom(\Cl Y,\mathbb{G}_m)$ is in the stabilizer of some $z\in U_\sigma$. Since $z_\rho\neq 0$ for every $\rho \notin \sigma(1)$, this implies $g(v)=1$ for every $v \in h(\langle e_\rho, \: \rho\notin\sigma(1)\rangle_\mathbb{Z})$. 

But in fact, if $\sigma$ is a smooth cone, the restriction of $h$ to the span of $\{e_\rho,\:\rho\notin\sigma(1)\}$ is still surjective. This is because we can complete $\{n_\rho,\:\rho\in\sigma(1)\}$ to a $\mathbb{Z}$-basis $\{n_\rho\}\cup\{n'_\alpha\}$ of $N$, choose a dual basis $\{m_\rho,\:\rho\in\sigma(1)\}\cup\{m'_\alpha\}$ and see that under the map $f:M\rightarrow\mathbb{Z}^d$ from (\ref{torus ses}), $m_\rho\mapsto e_\rho + w$, some $w \in \langle e_\rho, \: \rho\notin\sigma(1)\rangle_\mathbb{Z}$. As a consequence, every vector $w\in\mathbb{Z}^d$ can be written as $w'+w''$, with $w'\in \im (f)=\ker(h)$, $w''\in \langle e_\rho, \: \rho\notin\sigma(1)\rangle_\mathbb{Z}$. We conclude $\langle e_\rho, \: \rho\notin\sigma(1)\rangle_\mathbb{Z} \rightarrow \Cl Y$ is surjective. Therefore $g(v)=1$ for every $v\in \Cl Y$, so $g=1$.
\end{proof}

Finally, we get a new proof of the following well-known result.

\begin{theorem}[(Bott vanishing for toric varieties)]\label{thm bott for toric}
Let $Y$ be a $\mathbb{Q}$-factorial projective toric variety over $\Bbbk$ and $L$ an ample line bundle on $Y$. Then $H^i(Y,\Omega_Y^j\otimes L)=0$ for every $i>0,j\geq 0$. In particular, a smooth projective toric variety over $\Bbbk$ satisfies Bott vanishing.
\end{theorem}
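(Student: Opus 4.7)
The plan is to reduce the theorem directly to Theorem \ref{theorem bott for abelian}, using the Cox construction recalled at the beginning of \S \ref{toric subsection} together with the two preparatory lemmas.

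First, given an ample line bundle $L$ on $Y$, I would invoke Lemma \ref{L=L' toric} to replace the original linearization $\mathcal{L}$ by a linearization $\mathcal{L}'$ on $X=\mathbb{A}^d$ such that $L$ is the descent of $\mathcal{L}'$ and $Y=X\git_{\mathcal{L}'}G$. This reduces the problem to vanishing of $H^i(Y,\Omega^j_Y\otimes L)$ where $L$ is itself the descent of the linearization, which is precisely the situation addressed by Theorem \ref{theorem bott for abelian}.

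Next, I would check the hypotheses of that theorem one by one. The group $G=\Hom(\Cl Y,\mathbb{G}_m)$ is abelian and reductive, and $X=\mathbb{A}^d$ is smooth and affine. The condition $X^{ss}=X^s$ is built into Cox's description of a $\mathbb{Q}$-factorial projective toric variety as a GIT quotient with only stable points. For the codimension hypothesis $\codim_{X^{ss}}(X^{ss}\setminus X')\geq 2$, I would appeal to Lemma \ref{free action toric}, which says the $G$-action is free on $\pi^{-1}(Y^0)$, so that $X'\supseteq \pi^{-1}(Y^0)$; the codimension condition then follows automatically from Remark \ref{remark X'} (using normality of $Y$ and equidimensionality of $\pi$, since $X^{ss}=X^s$). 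The definition of $\Omega^j_Y$ as $\imath_*\Omega^j_{Y^0}$ matches Notation \ref{notation coarse omega}, so there is no discrepancy. Applying Theorem \ref{theorem bott for abelian} then gives $H^i(Y,\Omega^j_Y\otimes L)=0$ for all $i>0$, $j\geq 0$.

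Finally, for the last sentence of the statement, when $Y$ is smooth the pushforward $\imath_*\Omega^j_{Y^0}$ is just the usual sheaf of differentials $\Omega^j_Y$, so the vanishing obtained above is precisely Bott vanishing in its standard form.

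I do not anticipate a genuine obstacle here, since all the real content has been packaged in Theorem \ref{theorem bott for abelian} and in Lemmas \ref{L=L' toric} and \ref{free action toric}. The only point that requires a moment of care is verifying the codimension hypothesis on $X^{ss}\setminus X'$; this is the step I would expect to double-check, but it follows immediately from the free-action lemma and Remark \ref{remark X'}.
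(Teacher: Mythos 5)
Your proposal is correct and follows essentially the same route as the paper: reduce via Lemma \ref{L=L' toric} to the case where $L$ descends from the linearization, verify the codimension hypothesis using Lemma \ref{free action toric} together with Remark \ref{remark X'}, and then apply Theorem \ref{theorem bott for abelian}. The paper's proof is exactly this, so no changes are needed.
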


\begin{proof}
Let $L$ be an ample line bundle on $Y$. By the discussion above $L$ is the descent of a linearization $\mathcal{L}'$ such that $Y=X\git_{\mathcal{L}'}G$, so we can assume $L$ is the descent of the linearization $\mathcal{L}$. By Lemma \ref{free action toric}, the non-free locus $X^{ss}\backslash X'$ has codimension $\geq 2$ (see Remark \ref{remark X'}), so Theorem \ref{theorem bott for abelian} implies $H^i(Y,\Omega_Y^j\otimes L)=0$ for $i>0,j\geq 0$. If $Y$ is smooth, then $Y^0=Y$ and this is Bott vanishing.
\end{proof}

\begin{remark}
In the statement of Theorem \ref{thm bott for toric} we may replace $L$ by the reflexive sheaf $\mathcal{O}_Y(D)$, where $D$ is any ample Weil divisor, understood as in Definition \ref{ample weil}. Indeed, the same proof holds since Lemma \ref{L=L' toric} is valid for this case as well.
\end{remark}

\section{The case of $X= (\mathbb{P}^1)^n$ and $G=PGL_2$}\label{main section}

Now we consider the diagonal action of $PGL_2$ on $(\mathbb{P}^1)^n$, so throughout this section $G$ will denote $PGL_2$, $X$ will denote $(\mathbb{P}^1)^n$ and $\mathfrak{g}$ will denote $\mathfrak{sl}_2$. For a given ample line bundle $\mathcal{L}=\mathcal{O}_X(d_1,\ldots,d_n)$, $d_i>0$, where $\sum d_i$ is even, there is a unique $PGL_2$-linearization, giving rise to a GIT quotient $Y=X \git_\mathcal{L} PGL_2$, a projective variety. Variation of GIT is described, for instance, in \cite[\S 8]{hassett}

A maximal torus of $\mathfrak{g}$ is one-dimensional, so to get a KN stratification it essentially suffices to consider a single one-parameter subgroup. We consider $\lambda: \mathbb{G}_m \rightarrow PGL_2$ given by
\begin{align*}
\lambda(t)=\begin{bmatrix}
t & 0 \\
0 & t^{-1}
\end{bmatrix}.
\end{align*}
The fixed locus of $\lambda$ is the union of the points $z_I$ where, for every $I \subset \{1,\ldots,n\}$, $z_I$ has coordinates $z_i=\infty$ if $i \in I$ and $z_i = 0$ otherwise. We use the convention $0=(0:1)$, $\infty=(1:0)$. One can compute
\begin{align*}
\mu(\lambda,I)=-\text{weight}_\lambda \mathcal{L}|_{z_I} = \sum_{i \notin I} d_i - \sum_{i \in I} d_i
\end{align*}
and we can get a KN stratification of the unstable locus indexed by the subsets $I$ for which $\mu(\lambda,I) >0$. Indeed, a point $z=(z_1,\ldots,z_n)\in X$ is unstable if and only if there is an $I\subset \{1,\ldots,n\}$ with $\sum_{i\in I}d_i>\sum_{i\notin I}d_i$ such that $z_i=\alpha$ for every $i\in I$. Also, it can be computed that $\eta_{\lambda,I}=2(|I|-1)$ (see \cite{castravet2} for details). Since the ambient space $X=(\mathbb{P}^1)^n$ is a smooth projective toric variety, it satisfies Bott vanishing and then results from Section \ref{koszul section} can be applied.

Note that in our case, the cotangent sheaf is a direct sum of line bundles, namely $\Omega_X = \bigoplus_{i=1}^n \mathcal{O}_X(0,\ldots,-2,\ldots,0)$, each summand having a $-2$ in the $i$-th position and zeros elsewhere. The section $s\in H^0(T_X\boxtimes \mathcal{O}_{\mathbb{P}(\mathfrak{g})}(1))^G$ associated to the map $\Omega_X \rightarrow \mathfrak{g}^\vee$ is then the sum of $n$ sections $s_i$, where each $s_i\in (H^0(X,\mathcal{O}_X(0,\ldots,2,\ldots,0))\otimes \mathfrak{g}^\vee)^G$.

Let $\{E,H,F\}$ be the usual basis of $\mathfrak{g}$, where $[E,H]=-2E$, $[E,F]=H$, $[H,F]=-2F$. If we choose a basis $\{X_0,Y_0,Z_0\}$ of $\mathfrak{g}^\vee=H^0(\mathbb{P}(\mathfrak{g}),T_{\mathbb{P}(\mathfrak{g})})$ that is dual to the basis $\{-E,H,F\}$, then we can explicitly compute $s_i$ in coordinates. For this, we use the isomorphism $T_{\mathbb{P}^1}\cong\mathcal{O}_{\mathbb{P}^1}(2)$, ${\partial}/{\partial (x/y)}\mapsto -y^2$, where $(x:y)$ are coordinates in $\mathbb{P}^1$. Writing
\begin{align*}
E=\frac{\partial}{\partial t}
\left.\begin{pmatrix}
1 & t\\
0 & 1
\end{pmatrix}\right|_{t=0},
\quad
H=\frac{\partial}{\partial t}
\left.\frac{1}{1-t^2}\begin{pmatrix}
1+t & 0\\
0 & 1-t
\end{pmatrix}\right|_{t=0},
\quad
F=\frac{\partial}{\partial t}
\left.\begin{pmatrix}
1 & 0\\
t & 1
\end{pmatrix}\right|_{t=0}
\end{align*}
and using the chart $y\neq 0$ we find that the action of $PGL_2$ on $\mathbb{P}^1$ determines the map $\mathfrak{g}\rightarrow T_{\mathbb{P}^1}$ that sends 
\begin{align*}
E\mapsto\frac{\partial}{\partial(x/y)},\quad H\mapsto\frac{2x}{y}\frac{\partial}{\partial(x/y)},\quad F\mapsto -\frac{x^2}{y^2}\frac{\partial}{\partial(x/y)}.
\end{align*}
Combining all these and using $\partial/\partial(x_i/y_i)\mapsto -y_i^2$ on each $i$-th component $\mathbb{P}^1_{(x_i:y_i)}$, we find $s_i = x_i^2Z_0 - 2x_iy_iY_0+y_i^2X_0$. Observe that we can also identify $\mathfrak{g}\cong \mathfrak{g}^\vee$ as $\mathfrak{g}$-representations by sending the basis $\{E,H,F\}$ to $\{Z_0,2Y_0,-X_0\}$.

\begin{remark}
Consider the diagonal action of $SL_2$ on $X$. Then any ample line bundle $\mathcal{L}=\mathcal{O}_X(d_1,\ldots,d_n)$ carries a unique $SL_2$-linearization, giving rise to a GIT quotient $Y=X\git_\mathcal{L} SL_2$. If $\sum d_i$ is even, then $\mathcal{L}$ also admits a unique $PGL_2$-linearization, and $X \git_\mathcal{L} SL_2 = X \git_\mathcal{L} PGL_2$. In any case, $\mathcal{L}^{\otimes 2}$ admits a $PGL_2$-linearization and $X \git_\mathcal{L} SL_2$ is canonically isomorphic to the quotient $X \git_{\mathcal{L}^{\otimes 2}} SL_2 = X \git_{\mathcal{L}^{\otimes 2}} PGL_2$.
\end{remark}

\begin{proposition} \label{M is lci}
Consider $X=(\mathbb{P}^1)^n$, $G=PGL_2$, $\mathfrak{g}=\mathfrak{sl}_2$ as above. Let $M \subset X \times \mathbb{P}(\mathfrak{g})$ be the vanishing locus of the section $s \in H^0(T_X \boxtimes \mathcal{O}_{\mathbb{P}(\mathfrak{g})}(1))^G$ associated to the map $\Omega_X \rightarrow \mathfrak{g}^\vee$. Then $M=\bigcap (s_i=0)$ is a local complete intersection.
\end{proposition}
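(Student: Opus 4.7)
The plan is to reduce the claim to a dimension count. By the criterion recalled from Hartshorne \S II.8 (and explicitly invoked in the paragraph preceding the statement), $M$ is the zero locus of a section of the rank-$n$ bundle $T_X\boxtimes \mathcal{O}_{\mathbb{P}(\mathfrak{g})}(1)$ on the smooth $(n+2)$-dimensional variety $X\times \mathbb{P}(\mathfrak{g})$. Since the section splits as a sum of the $n$ local sections $s_i$, the subscheme $M=\bigcap_i(s_i=0)$ is cut out locally by $n$ equations; by Krull's Hauptidealsatz every irreducible component of $M$ has codimension at most $n$, so it suffices to show that $\operatorname{codim}_{X\times \mathbb{P}(\mathfrak{g})} M\ge n$, i.e.\ $\dim M\le 2$.

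To prove the dimension bound, I would analyze the projection $\pi\colon M\to \mathbb{P}(\mathfrak{g})$. For a fixed point $[u]\in \mathbb{P}(\mathfrak{g})$ with representative $u\in \mathfrak{sl}_2\setminus\{0\}$, the restriction $s_i(\cdot,[u])$ is the binary quadratic form
\begin{align*}
x_i^2\, Z_0(u)-2x_iy_i\, Y_0(u)+y_i^2\, X_0(u)
\end{align*}
in the $i$-th factor of $X=(\mathbb{P}^1)^n$. Because $u\ne 0$, at least one of $X_0(u), Y_0(u), Z_0(u)$ is nonzero, so this quadratic is not identically zero and has either two distinct roots (when $u$ is semisimple, i.e.\ $[u]$ lies off the conic $Y_0^2-X_0Z_0=0$) or one double root (when $u$ is nilpotent). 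In particular its zero locus in the $i$-th factor is a finite set, so
\begin{align*}
\pi^{-1}([u])=\prod_{i=1}^{n}\{z_i\in \mathbb{P}^1 : s_i(z_i,[u])=0\}
\end{align*}
is a finite set of at most $2^n$ points.

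Since $X\times \mathbb{P}(\mathfrak{g})\to \mathbb{P}(\mathfrak{g})$ is proper, so is $\pi$, and a proper map with finite fibres satisfies $\dim M\le \dim \mathbb{P}(\mathfrak{g})=2$. Combining this with the lower bound $\dim M\ge (n+2)-n=2$ coming from Krull's theorem, every component of $M$ has dimension exactly $2$, so the $n$ equations $s_1=\cdots=s_n=0$ form a regular sequence locally. Hence $M$ is a local complete intersection of codimension $n$. The only nontrivial step is the finiteness of the fibres of $\pi$, which reduces to the elementary fact that a nonzero element of $\mathfrak{sl}_2$ induces a nonzero vector field on $\mathbb{P}^1$ with a finite (indeed at most $2$) vanishing locus; everything else is a formal dimension count.
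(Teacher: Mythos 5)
Your proposal is correct and follows essentially the same route as the paper: both reduce the claim to the dimension count $\dim M=2$ and establish it by showing the projection $M\to\mathbb{P}(\mathfrak{g})$ has finite fibres of cardinality at most $2^n$. The only cosmetic difference is the justification of the bound on each factor --- you observe that $s_i(\cdot,[u])$ is a nonzero binary quadratic with at most two roots, while the paper phrases the same count dually as the fact that at most two tangent lines of the conic $X_0Z_0-Y_0^2=0$ pass through a given point of $\mathbb{P}^2$.
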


\begin{proof}
The section $s$ is the direct sum of the $n$ sections $s_i \in H^0(\mathcal{O}_X(0,\dots,2,\ldots,0)\boxtimes \mathcal{O}_{\mathbb{P}^2}(1))^{G}$ given by $s_i = x_i^2Z_0 - 2x_iy_iY_0+y_i^2X_0$, as noted above. By smoothness of $X$, it suffices to check that $\dim M=2$. Consider the map $p:M \rightarrow \mathbb{P}^2$ given by the projection on the second component. We show $p$ is a finite map. Indeed, since $p$ is projective, it suffices to show that it has finite fibers. We note that a given point $(x_1:y_1;\ldots;x_n:y_n)\times (X_0:Y_0:Z_0)$ is in $M$ if and only if for every $i$, $(X_0:Y_0:Z_0) \in \mathbb{P}^2$ is in the line that is tangent to the rational normal curve $(X_0Z_0-Y_0^2=0)\subset \mathbb{P}^2$ at the point $(x_i^2:x_iy_i:y_i^2)$. Since every point is in at most $2$ lines tangent to a given conic, we have $|p^{-1}(X_0:Y_0:Z_0)| \leq 2^n$. Therefore $p$ is finite and $\dim M = 2$.
\end{proof}

\begin{corollary} \label{almost bott}
For a $PGL_2$-linearized ample line bundle $\mathcal{L}$ on $X$, we have $H^i(\mathfrak{X},\Lambda^j L_{\mathfrak{X}} \otimes \mathcal{L})=0$ for $i\neq 0,j$.
\end{corollary}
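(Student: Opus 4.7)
The proof is a direct synthesis of results already established. My plan is to check that the hypotheses of Proposition \ref{if lci} are satisfied for $X=(\mathbb{P}^1)^n$, $G=PGL_2$, and then translate the resulting vanishing on $\mathfrak{X}^{ss}$ to one on $\mathfrak{X}$ via the quantization step in Corollary \ref{can use teleman}.

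First I would verify the two input hypotheses of Proposition \ref{if lci}. The ambient variety $X=(\mathbb{P}^1)^n$ is a smooth projective toric variety, hence satisfies Bott vanishing; this is classical, and in any case follows from Theorem \ref{thm bott for toric} proved in Section \ref{toric subsection}. The local complete intersection condition on $M\subset X\times\mathbb{P}(\mathfrak{g})$ is precisely the content of Proposition \ref{M is lci}, whose proof identified $M$ as $\bigcap_i(s_i=0)$ and bounded $\dim M = 2$ by showing that the projection to $\mathbb{P}(\mathfrak{g})=\mathbb{P}^2$ is finite, using the interpretation of the fiber over $(X_0\!:\!Y_0\!:\!Z_0)$ in terms of tangent lines to the rational normal curve $X_0Z_0-Y_0^2=0$.

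With these hypotheses in place, Proposition \ref{if lci} gives $H^i(\mathfrak{X}^{ss},\Lambda^j L_{\mathfrak{X}^{ss}}\otimes\mathcal{L})=0$ for $i\neq 0,j$. Finally, I would invoke Corollary \ref{can use teleman}, which identifies this with $H^i(\mathfrak{X},\Lambda^j L_{\mathfrak{X}}\otimes\mathcal{L})$ via the Quantization Theorem; the required weight bound was established in Section \ref{weights section} by combining the inequality $\weight_{\lambda_\alpha}(\Lambda^j L_{\mathfrak{X}})|_{Z_\alpha}\leq\eta_\alpha$ with the strict negativity of $\weight_{\lambda_\alpha}\mathcal{L}|_{Z_\alpha}$ coming from the definition of the KN stratification.

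There is no genuine obstacle here: the corollary is essentially an assembly statement, and all the nontrivial inputs, namely the Koszul spectral sequence argument, the weight computation, and the dimension count for $M$, have been carried out previously. The substantive remaining work in Section \ref{main section} is the unaddressed case $i=j$, where one must show surjectivity of the map of invariants $H^0(X,\Omega_X\otimes S^{j-1}\mathfrak{g}^\vee)^G\to H^0(X,S^j\mathfrak{g}^\vee)^G$; that is handled by separate, case-specific arguments using the graphical description of invariant sections.
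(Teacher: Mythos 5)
Your proposal is correct and follows exactly the paper's own argument: verify the hypotheses of Proposition \ref{if lci} (Bott vanishing for the toric variety $X=(\mathbb{P}^1)^n$ and the local complete intersection property of $M$ from Proposition \ref{M is lci}), then transfer the resulting vanishing from $\mathfrak{X}^{ss}$ to $\mathfrak{X}$ via Corollary \ref{can use teleman}. No gaps; this is the same assembly of previously established results that the paper uses.
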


\begin{proof}
This follows from Proposition \ref{if lci}, as $M$ is a local complete intersection and $X=(\mathbb{P}^1)^n$ has the Bott vanishing property. Recall $H^i(\mathfrak{X}^{ss},\Lambda^j L_{\mathfrak{X}^{ss}}\otimes \mathcal{L})=H^i(\mathfrak{X},\Lambda^j L_{\mathfrak{X}}\otimes \mathcal{L})$ by Corollary \ref{can use teleman}.
\end{proof}

\subsection{The ring of invariants}

Let $Y=(\mathbb{P}^1)^n\git PGL_2$ be given by a polarization $\mathcal{L}=\mathcal{O}(d_1,\ldots,d_n)$, $d_i>0$. We will assume $X^{ss}=X^{s}$. This implies that the action of $G=PGL_2$ is free in $X^{ss}$ and $Y$ is smooth.

\begin{remark} \label{no sss}
The condition $X^{ss}=X^s$ is equivalent to the following condition: there is no partition $I \sqcup I^c = \{1,\ldots,n\}$ such that $\sum_{i \in I} d_i = \sum_{i \notin I} d_i$. This is a consequence of the Hilbert-Mumford criterion and the description of the unstable locus (see e.g. \cite[\S 4]{castravet2}).
\end{remark}

If we consider the action of the torus $(\mathbb{G}_m)^n$ on the Grassmannian $\Gr(2,n)$ and linearize the ample line bundle $\mathcal{O}_{\Gr(2,n)}(1)$ of the Pl\"ucker embedding using some character $(l_1,\ldots,l_n)$, we have Gelfand-MacPherson correspondence \cite[Theorem 2.4.7]{kapranov}:
\begin{align*}
\bigoplus_{d \geq 0} H^0((\mathbb{P}^1)^n,\mathcal{O}(dl_1,\ldots,dl_n))^{PGL_2} = \bigoplus_{d \geq 0} H^0(\Gr(2,n),\mathcal{O}(d))^{(\mathbb{G}_m)^n}.
\end{align*}

That is, $\bigoplus_{d \geq 0} H^0((\mathbb{P}^1)^s,\mathcal{O}(dl_1,\ldots,dl_n))^{PGL_2}$ can be seen as a subring of the homogeneous coordinate ring of the Grassmannian, $\Bbbk[p_{ik}]/(p_{ik}p_{rl}-p_{ir}p_{kl}+p_{il}p_{kr})$, where $p_{ik}=x_iy_k-x_ky_i$ are the Pl\"ucker minors. The $d$-th graded piece corresponds to polynomials in $p_{ik}$ having multi-degree $dl_1,\ldots,dl_n$ in $x_1,y_1;\ldots;x_n,y_n$.

\begin{lemma} \label{is toric}
Suppose we have a linearization $\mathcal{L}$ giving $X^{ss}=X^s$ and with an unstable locus having an irreducible component of codimension $1$. Then $Y=(\mathbb{P}^1)^n \git_\mathcal{L} PGL_2$ is a smooth projective toric variety.
\end{lemma}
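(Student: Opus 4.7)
The hypothesis on the unstable locus pins down the combinatorial picture. Since each stratum $S_I$ of the Kempf--Ness stratification has codimension $|I| - 1$ in $X$ (the orbit of the locus where all $|I|$ marked points coincide), an irreducible codimension-one component forces $|I| = 2$. After relabeling I may assume $d_1 + d_2 > d_3 + \cdots + d_n$, so that $S_{\{1,2\}} = \{z_1 = z_2\}$ is an unstable divisor and $X^{ss}$ is contained in the open set $U = \{z_1 \neq z_2\}$.

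My plan is to present $Y$ as a GIT quotient of a smaller toric variety by a subtorus. Using that $PGL_2$ acts $2$-transitively on $\mathbb{P}^1$, the pair $(z_1, z_2)$ can be moved to $(0, \infty)$ with residual freedom the one-parameter subtorus $T \subset PGL_2$ stabilizing $\{0, \infty\}$ pointwise. Setting $V = \{0\} \times \{\infty\} \times (\mathbb{P}^1)^{n-2} \subset X$, the action map furnishes a $PGL_2$-equivariant isomorphism $PGL_2 \times^T V \xrightarrow{\sim} U$. Passing to quotients, $Y = X \git_{\mathcal{L}} PGL_2 = V \git_{\mathcal{L}|_V} T$, where $\mathcal{L}|_V \cong \mathcal{O}(d_3, \ldots, d_n)$ carries the induced $T$-linearization (twisted by the $T$-character on the fiber of $\mathcal{O}(d_1, d_2)$ over $(0, \infty)$).

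Since $V \cong (\mathbb{P}^1)^{n-2}$ is a smooth projective toric variety and $T$ embeds as a one-parameter subgroup of its big torus $(\mathbb{G}_m)^{n-2}$ acting diagonally, the GIT quotient $V \git T$ is a projective toric variety whose residual torus has dimension $n - 3 = \dim Y$. For smoothness, I would argue that the identification $U \cong PGL_2 \times^T V$ transports the property $X^{ss} = X^s$ to $V^{ss} = V^s$, and that the non-free locus of the faithful $T$-action on $V$, which is precisely $\{0, \infty\}^{n-2}$, is disjoint from $V^{ss}$. Indeed, a point all of whose coordinates lie in $\{0, \infty\}$ corresponds to a $\lambda$-fixed point of $X$, and under the assumption $X^{ss} = X^s$ (which forbids vanishing $\lambda$-weights of $\mathcal{L}$) such a point is necessarily unstable. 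Therefore $T$ acts freely on $V^{ss}$ and the coarse quotient $Y$ is smooth.

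The principal obstacle will be the careful bookkeeping needed to set up the identification $U \cong PGL_2 \times^T V$ compatibly with the linearizations and to verify that stability transfers correctly between the two sides via Hilbert--Mumford applied to $T$ and to $PGL_2$. Once this is in place, the toric and smoothness conclusions follow from standard facts about subtorus quotients of toric varieties.
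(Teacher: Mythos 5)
Your proposal is correct and follows essentially the same route as the paper: reduce to the slice $V=\{0\}\times\{\infty\}\times(\mathbb{P}^1)^{n-2}$ with its residual torus $T\subset PGL_2$, identify $Y$ with $V\git T$, and conclude toricity from the dense orbit of the big torus of $V$ on $V^{ss}$. The only real difference is in the bookkeeping you defer: you would justify the identification via the induced space $PGL_2\times^T V\cong\{z_1\neq z_2\}$ and obtain smoothness from freeness of the $T$-action on $V^{ss}$, whereas the paper verifies the matching of (semi)stable loci by hand and compares the two rings of invariants explicitly in Pl\"ucker coordinates.
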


\begin{proof}
Given that $X^s=X^{ss}$, $Y$ is the (smooth) geometric quotient $X^{ss}/G$. By the description of the unstable locus, we can assume $d_1+d_2> \sum_{i\geq 3}d_i$ without loss of generality. That is, $X^{ss}$ does not intersect the big diagonal $\{p_1=p_2\}\subset (\mathbb{P}^1)^n$. Call $V=0 \times \infty \times (\mathbb{P}^1)^{n-2}$ and consider $\mathbb{G}_m$ as the subgroup of $PGL_2$ given by
\begin{align}\label{torus each component}
\begin{bmatrix}
t & 0 \\
0 & t^{-1}
\end{bmatrix}.
\end{align}
Observe $\mathbb{G}_m$ acts on $V$, and the linearization $\mathcal{L}=\mathcal{O}_X(d_1,\ldots,d_n)$ restricts to a $\mathbb{G}_m$-linearization of $\mathcal{L}|_{V}=\mathcal{O}_V(d_3,\dots,d_n)$, which corresponds to the character $t\mapsto t^{d_1-d_2}$. By the stability condition, we see that every stable $G$-orbit intersects $V$. Further, $V\cap X^{ss}$ is precisely the semi-stable locus $V^{ss}=V^s$ for the $\mathbb{G}_m$-linearization of $\mathcal{L}|_{V}$. In fact, $Z=(0,\infty,z_3,\ldots,z_n)\in V$ is unstable if and only if there is some $I'\subset \{3,\ldots,n\}$ such that one of the following holds:
\begin{enumerate}
\renewcommand{\theenumi}{\alph{enumi}}
\item $d_2+\sum_{i\in I'}d_i>d_1+\sum_{i \notin I'}d_i$ and $z_i=\infty$ for all $i\in I'$, or
\item $d_2+\sum_{i\in I'}d_i<d_1+\sum_{i \notin I'}d_i$ and $z_i=0$ for all $i\notin I'$.
\end{enumerate}
Since $d_1+d_2> \sum_{i\geq 3}d_i$, this is the same stability condition for the $PGL_2$ action. Observe $\mathbb{G}_m$ acts freely on $V^{ss}$, and $V^{ss}\git \mathbb{G}_m$ is a geometric quotient.

We note that in fact the GIT quotients $X\git G$ and $V \git \mathbb{G}_m$ coincide. To see this, we can look at the coordinate rings of invariants. First, observe that $p_{i1}=x_iy_1-y_ix_1$ restricts to $x_i$ on $V$, while $p_{2i}$ restricts to $y_i$. Call $2\delta = d_1+d_2-\sum_{i\geq 3}d_i >0$. Then the restriction $\bigoplus_{k\geq 0} H^0(X,\mathcal{L}^{\otimes k})^{PGL_2} \rightarrow \bigoplus_{k\geq 0} H^0(V,\mathcal{L}|_{V}^{\otimes k})^{\mathbb{G}_m}$ is an isomorphism of graded rings, with inverse given in degree $k$ by
\begin{align*}
R(x_i,y_i) \mapsto p_{21}^{\delta k} R(p_{i1},p_{2i}),
\end{align*}
for a polynomial $R(x_3,y_3;\ldots;x_n,y_n)\in H^0(V,\mathcal{O}_V(kd_3,\ldots,kd_n))^{\mathbb{G}_m}$.

Now let the torus $T=(\mathbb{G}_m)^{n-2}$ act on $V=(\mathbb{P}^1)^{n-2}$, by (\ref{torus each component}) in each component. Then $V^{ss}=X^{ss}\cap V$ is invariant under the action of $T$. In fact, suppose $z=(0,\infty,z_3,\ldots,z_n) \in X^{us}$. Then $z_i=\alpha$ for every $i\in I$, for some $I$ such that $\sum_{i\notin I} d_i>\sum_{i\in I}d_i$. Since $d_1+d_2>\sum_{i\geq 3}d_i$, either $1 \in I$, in which case $\alpha=0$, or $2\in I$, in which case $\alpha=\infty$. But both $0$ and $\infty$ are fixed by $\mathbb{G}_m$, so $t\cdot z$ will still be unstable for any $t \in T$.

Further, $T$ acts on $V^{ss}$ with an open dense orbit, say $T\cdot (0,\infty,1,\ldots,1)$. We conclude that the $(n-3)$-dimensional torus $T/\mathbb{G}_m$ acts on $Y=V^{ss}/\mathbb{G}_m$ with an open dense orbit. Therefore $Y$ is a toric variety.
\end{proof}

Another proof of the previous lemma can be found in \cite[Theorem 2]{schmitt}, using the point of view of variation of GIT.

Now suppose $X^{us}$ has codimension $\geq 2$. We claim that any ample line bundle on $Y$ is the descent of an ample line bundle $\mathcal{O}_X(d'_1,\ldots,d'_n)$ on $X$ living in the same GIT chamber as $\mathcal{L}$, in the sense of \cite{vgit}.

\begin{lemma} \label{L'=L}
Suppose $X^{us}$ has codimension at least $2$ and let $L$ be an ample line bundle on $Y=X \git_\mathcal{L} PGL_2$, where $\mathcal{L}$ is such that $X^s=X^{ss}$. Then $L$ is the descent of an ample line bundle $\mathcal{L}'=\mathcal{O}_X(d'_1,\ldots,d'_n)$ such that $Y= X\git_{\mathcal{L}'} PGL_2$.
\end{lemma}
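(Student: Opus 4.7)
The plan mirrors the toric argument in Lemma \ref{L=L' toric}. First I identify $\Pic Y$ with $\Pic^{G}X$ in order to lift $L$ to an equivariant line bundle $\mathcal{L}'$ on $X$; second I show $\mathcal{L}'$ is ample on $X$; third I conclude $X\git_{\mathcal{L}'}G\cong Y$ by comparing section rings. Since $G=PGL_2$ acts freely on $X^{ss}$, the pullback $\pi^{*}\colon \Pic Y\to \Pic^{G}X^{ss}$ is an isomorphism, and smoothness of $X$ together with $\codim X^{us}\geq 2$ extends this to $\Pic Y\cong \Pic^{G}X$. Therefore $L$ is the descent of a unique $\mathcal{L}'=\mathcal{O}_X(d'_1,\ldots,d'_n)\in \Pic^{G}X$, where $\sum d'_i$ is necessarily even so that a $PGL_2$-linearization exists.

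The key step is showing $d'_i>0$ for each $i$, which I will do via a curve argument. Assume $n\geq 4$; the cases $n\leq 3$ make $\dim Y\leq 0$ and render the claim trivial. Fix a generic $(z_j)_{j\neq i}\in(\mathbb{P}^1)^{n-1}$ with at least three distinct entries, and let $C_i\cong \mathbb{P}^1\subset X$ be the curve obtained by letting the $i$-th coordinate vary through this point. Since $\dim X^{us}\leq n-2$, the image of $X^{us}$ under the projection forgetting the $i$-th factor is a proper closed subset of $(\mathbb{P}^1)^{n-1}$, so for a generic base point $C_i$ lies entirely in $X^{ss}$. Moreover, because $PGL_2$ acts $3$-transitively on $\mathbb{P}^1$, any element of $PGL_2$ fixing the coordinates $(z_j)_{j\neq i}$ must be trivial, and hence $C_i$ is not contained in a single $G$-orbit; consequently $C'_i:=\pi(C_i)\subset Y$ is a curve. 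The identity $\mathcal{L}'|_{C_i}=\pi^{*}L|_{C_i}$ of line bundles on $\mathbb{P}^1$ then gives
\begin{equation*}
d'_i=\deg \mathcal{L}'|_{C_i}=\deg(\pi|_{C_i})\cdot \deg L|_{C'_i}>0
\end{equation*}
by ampleness of $L$ on $Y$, so $\mathcal{L}'$ is ample on $X$.

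With ampleness of $\mathcal{L}'$ in hand, the final identification is formal: smoothness and $\codim X^{us}\geq 2$ yield $H^0(X,\mathcal{L}'^{\otimes k})=H^0(X^{ss},\pi^{*}L^{\otimes k})$, and taking $G$-invariants together with the projection formula produces
\begin{equation*}
\bigoplus_{k\geq 0}H^0(X,\mathcal{L}'^{\otimes k})^{G}=\bigoplus_{k\geq 0}H^0(Y,L^{\otimes k}).
\end{equation*}
Applying $\proj$ to both sides, the ampleness of $L$ on $Y$ and of $\mathcal{L}'$ on $X$ gives $X\git_{\mathcal{L}'}G\cong Y$. The main obstacle is the curve argument for ampleness of $\mathcal{L}'$, in particular verifying that for a generic choice of transverse coordinates the axis $C_i$ can be arranged to sit inside $X^{ss}$ and to map to a genuine curve in $Y$; the remaining steps are formal consequences of the free action and the codimension hypothesis.
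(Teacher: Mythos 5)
Your proof is correct, and its skeleton matches the paper's: identify $\Pic Y$ with $\Pic^{G}X$ via Kempf descent for the free action plus the codimension hypothesis, lift $L$ to a linearized $\mathcal{L}'=\mathcal{O}_X(d'_1,\ldots,d'_n)$, prove $d'_i>0$, and recover $Y=X\git_{\mathcal{L}'}PGL_2$ by comparing $\proj$ of the invariant section rings (which both arguments do identically, using normality and $\codim X^{us}\geq 2$ to pass from $X$ to $X^{ss}$). The one step where you genuinely diverge is the positivity of the $d'_i$. The paper reads this off algebraically from the $\mathbb{Z}^n$-graded invariant ring $R$: since $R_v=0$ whenever some $v_i<0$, a negative $d'_i$ would make $\proj\bigoplus_k R_{kd'}$ empty, and $d'_i=0$ would force the ring to involve only $n-1$ of the factors, giving $\dim Y\leq n-4<n-3$. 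You instead restrict $\pi^{*}L$ to a coordinate axis $C_i$, chosen generically so that it avoids $X^{us}$ (legitimate, since $X^{us}$ is closed of dimension $\leq n-2$ and so has proper image under the projection forgetting the $i$-th factor) and maps finitely onto a curve in $Y$ (legitimate, since an element of $PGL_2$ fixing three distinct coordinates is trivial), and then invoke that an ample bundle has positive degree on every curve. Both arguments are complete; yours is more geometric, avoids the dimension formula $\dim Y=n-3$ in favor of only ampleness of $L$ and $\codim X^{us}\geq 2$, and would transfer more readily to quotients whose invariant rings are less transparent, while the paper's is shorter given that the graded ring $R$ has already been set up. Your handling of the side points ($n\leq 3$ trivial, $\sum d'_i$ even for the linearization to exist) is adequate.
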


\begin{proof}
Since the action of $G$ is free on $X^{ss}=X^s$, by Kempf's descent lemma \cite[Theorem 2.3]{kempfs}, every $G$-equivariant line bundle on $X$ descends to a line bundle on $Y$, and in fact $\pi^*$ is an isomorphism from $\Pic Y$ to the $G$-equivariant Picard group $\Pic^G X^{ss} = \Pic \mathfrak{X}^{ss}$, with inverse $\mathcal{L}'\mapsto\pi_*(\mathcal{L}')^{G}$. Further, every $PGL_2$-linearized line bundle on $X^{ss}$ extends uniquely to a $PGL_2$-linearized line bundle on $X$ by the codimension hypothesis (see e.g. \cite[\S 7]{dolgachev-lectures}).

For any $v=(v_1,\ldots,v_n)\in \mathbb{Z}^n$, $PGL_2$ acts naturally on the global sections $H^0(X,\mathcal{O}_X(v))$. Let $R$ be the $\mathbb{Z}^n$-graded ring $R=\bigoplus_{v \in \mathbb{Z}^n} R_v$, where $R_v=H^0(X,\mathcal{O}_X(v))^{PGL_2}$. Notice $R_v=0$ if $\sum v_i$ is odd or if some $v_i<0$. If $\mathcal{L}=\mathcal{O}_X(w)$ is the linearization, then by definition $Y=X\git_{\mathcal{L}}PGL_2= \proj \bigoplus_{k \geq 0} R_{kw}$. From the previous observation, every $PGL_2$-linearized line bundle $\mathcal{O}_X(v)$ descends to a line bundle $\mathcal{L}_v$ on $Y$, and by the codimension hypothesis, $R_v=H^0(Y,\mathcal{L}_v)$. On the other hand, given a line bundle $L$ on $Y$, $L$ must be $\mathcal{L}_{w'}$ for some $w'\in \mathbb{Z}^n$. If $\mathcal{L}_{w'}$ is ample, then $Y=\proj \bigoplus_{k \geq 0}(Y,\mathcal{L}_{w'}^{\otimes k})=\proj\bigoplus_{k \geq 0}R_{kw'}$. From this we see that the $w'_i$ are nonnegative, and in fact $w'_i>0$ since $\dim Y=n-3$. That is, $\mathcal{L}'=\mathcal{O}_X(w')$ is ample and $Y=X\git_{\mathcal{L}'}PGL_2$.
\end{proof}

\begin{remark}
In the situation of Lemma \ref{L'=L}, we have $\codim X^{us} \geq 2$ and $\Pic Y=\Pic^GX$, so the quotient $X\git_{\mathcal{L}'}PGL_2$ can only be isomorphic to $Y=X\git_\mathcal{L}PGL_2$ if $\mathcal{L}$ and $\mathcal{L}'$ live in the same GIT chamber (cf. \cite[Proposition 5.1]{hassett}). Observe that such $\mathcal{L}'$, descending to an ample line bundle on $Y$, cannot be in one of the GIT walls, because the ample cone of $Y$ is open in $\Pic Y=\Pic^G X$. In particular, $\mathcal{L}'$ admits no strictly semi-stable locus.
\end{remark}

If we want to show vanishing for $H^i(Y,\Omega_Y^j\otimes L)$, by Corollary \ref{can use teleman} and Lemma \ref{F computes hypercohomology}, we need to compute $H^i(\mathfrak{X},\Lambda^j L_\mathfrak{X}\otimes \mathcal{L})=\mathcal{H}^i(F^\cdot)^G$, where $F^\cdot$ is given by (\ref{koszul F}). From Corollary \ref{almost bott}, we know $\mathcal{H}^i(F^\cdot)^G=0$ for $i\neq 0,j$, so it remains to show that the maps of $G$-invariant global sections $(H^0(X,\Omega_X \otimes \mathcal{L}) \otimes S^{j-1}\mathfrak{g}^\vee)^G \rightarrow (H^0(X,\mathcal{L})\otimes S^j\mathfrak{g}^\vee)^G$ are surjective. The following two propositions show this holds when $j=1$ and $2$.

\begin{proposition} \label{graphs for H^1}
Let $\mathcal{L}=\mathcal{O}_X(d_1,\ldots,d_n)$ be a linearization with no strictly semi-stable locus. The map $H^0(X,\Omega_X \otimes \mathcal{L})^G \rightarrow (H^0(X,\mathcal{L})\otimes \mathfrak{g}^\vee)^G$ is surjective.
\end{proposition}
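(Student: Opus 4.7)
The strategy is to identify both sides with spaces of invariant graph monomials using an auxiliary $\mathbb{P}^1$ factor, and then establish surjectivity by a Plücker-relation argument in the spirit of Howard--Millson--Snowden--Vakil.

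Since the adjoint representation $\mathfrak{g}^\vee$ is isomorphic as a $PGL_2$-representation to $H^0(\mathbb{P}^1,\mathcal{O}(2))$, there is a natural identification
\[
(H^0(X,\mathcal{L})\otimes \mathfrak{g}^\vee)^G \;\cong\; H^0((\mathbb{P}^1)^{n+1},\mathcal{O}(d_1,\ldots,d_n,2))^G
\]
under the diagonal action of $G=PGL_2$. Choosing the explicit isomorphism sending $X_0,Y_0,Z_0$ to $x_{n+1}^2, x_{n+1}y_{n+1}, y_{n+1}^2$ and using the computation of $s_i$ from the previous subsection, one finds
\[
s_i \;=\; x_i^2 y_{n+1}^2 - 2x_i y_i x_{n+1} y_{n+1} + y_i^2 x_{n+1}^2 \;=\; (x_i y_{n+1}-x_{n+1}y_i)^2 \;=\; p_{i,n+1}^2,
\]
where $p_{jk}=x_jy_k-x_ky_j$. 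By Gelfand--MacPherson and the first fundamental theorem, invariants on either side are spanned by monomials in Plücker minors, or equivalently by (directed) graphs whose edge multisets realize the prescribed vertex degrees. The map under consideration then sends an invariant graph $\Gamma'$ on $\{1,\ldots,n\}$ of degree sequence $(d_1,\ldots,d_i-2,\ldots,d_n)$ to the graph $\Gamma'\cdot p_{i,n+1}^2$, obtained by appending a doubled edge from $i$ to the new vertex $n+1$.

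The proposition thus reduces to the combinatorial claim that, modulo Plücker relations, every invariant graph $\Gamma$ on $\{1,\ldots,n+1\}$ with degree sequence $(d_1,\ldots,d_n,2)$ is a linear combination of \emph{good} graphs, i.e.\ graphs in which the two edges incident to vertex $n+1$ share their other endpoint. For a \emph{bad} graph $\Gamma = p_{a,n+1}\,p_{b,n+1}\,p_{bc}\,\Gamma''$ with $a\neq b$ in $\{1,\ldots,n\}$, where $p_{bc}$ is some edge of $\Gamma$ incident to $b$ with $c\notin\{a,n+1\}$, apply the Plücker relation on $\{a,b,c,n+1\}$,
\[
p_{ab}\,p_{c,n+1} - p_{ac}\,p_{b,n+1} + p_{a,n+1}\,p_{bc} \;=\; 0,
\]
to rewrite $\Gamma$ as the good contribution $p_{ac}\,p_{b,n+1}^2\,\Gamma''$ plus the bad graph $-p_{ab}\,p_{b,n+1}\,p_{c,n+1}\,\Gamma''$ whose $(n+1)$-neighbours are now $\{b,c\}$ instead of $\{a,b\}$. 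Iterating this move along a carefully chosen complexity (for instance, a lexicographic ordering on the unordered pair of $(n+1)$-neighbours, with an appropriate tie-breaker in $\Gamma''$) should drive the bad portion down to good graphs.

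The main obstacle is ensuring the induction terminates and that the required edge $p_{bc}$ always exists with $c\neq a$. The positivity of the $d_i$ (since $\mathcal{L}$ is ample) and the assumption $X^{ss}=X^s$ (which via Remark \ref{no sss} bounds how many edges can be concentrated on a single vertex) together ensure that at each step the graph $\Gamma''$ carries enough edges to permit the Plücker move. A small number of boundary configurations ($\sum d_i$ minimal, or $\Gamma''$ entirely supported on edges $p_{ab}$) must be treated by hand; I expect these to be easy once the generic case is understood.
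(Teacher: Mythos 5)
Your setup is the same as the paper's: identify the target with invariant graphs of degree $(2,d_1,\ldots,d_n)$ on $n{+}1$ vertices, and observe that the image of the map consists exactly of the span of graphs carrying a doubled edge at the new vertex $w_0$. The gap is in the reduction step. Your single Pl\"ucker move replaces the neighbour pair $\{a,b\}$ of $w_0$ by $\{b,c\}$ and produces no visible decrease of any complexity; ``a carefully chosen complexity \ldots should drive the bad portion down'' is precisely the content that needs proving, and it cannot be supplied by a purely formal straightening argument, because the statement is \emph{false} without the stability hypothesis. For instance, for $d=(1,1,1,1)$ the image of $t_1$ is zero (a doubled edge would force some degree $d_i-2<0$) while the target contains $p_{01}p_{02}p_{34}\neq 0$; for $d=(2,2,2,2)$ the $2$-colorable graph $p_{01}p_{02}p_{13}p_{24}p_{34}$ is likewise an obstruction. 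So any correct termination argument must invoke the no-strictly-semi-stable hypothesis in an essentially combinatorial way. You locate that hypothesis in the wrong place: it is not about the graph having ``enough edges'' to permit the move, but about the non-existence of a $2$-coloring, which by K\H{o}nig's theorem forces every invariant graph to contain an odd cycle.

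The paper's proof supplies two ingredients that your sketch is missing. First, a non-combinatorial input: the $E_2$-differential of the spectral sequence of the Koszul complex on $\mathbb{P}^1\times X$ is computed explicitly via a \v{C}ech resolution, and its image consists of multiples of the central triangle $p_{0k}p_{0l}p_{kl}$; since the sequence degenerates and $H^0(M,\mathcal{V}|_M)^G=0$, every invariant is, modulo $\im(t_1)$, a combination of graphs containing such a triangle. Second, the key combinatorial lemma (Lemma \ref{even cycles}): a graph with a central cycle of even length satisfies $J\equiv -J \bmod \im(t_1)$ by walking $w_0$ around the cycle with Pl\"ucker relations, hence $J\in\im(t_1)$ in characteristic zero --- a parity/sign trick, not a descending induction. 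The hypothesis then enters by producing an odd non-central cycle, which is merged with the central triangle into a central even cycle. Unless you can exhibit an actual well-ordering making your iteration terminate (and explain how it uses the no-sss hypothesis), the proposal does not constitute a proof.
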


\begin{proposition} \label{graphs for H^2}
Let $\mathcal{L}=\mathcal{O}_X(d_1,\ldots,d_n)$ be a linearization with no strictly semi-stable locus. The map $(H^0(X,\Omega_X \otimes \mathcal{L})\otimes \mathfrak{g}^\vee)^G \rightarrow (H^0(X,\mathcal{L})\otimes S^2\mathfrak{g}^\vee)^G$ is surjective.
\end{proposition}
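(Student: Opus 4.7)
My approach is to translate the surjectivity into a combinatorial statement about $PGL_2$-invariant Plücker graphs and then reduce using Plücker relations. Using the identification $\mathfrak{g}^\vee\cong H^0(\mathbb{P}^1,\mathcal{O}(2))$ spelled out at the start of Section~\ref{main section}, the canonical section $s_i$ becomes the squared Plücker minor $p_{i,\bullet}^2$ on $\mathbb{P}^1_i\times\mathbb{P}^1_\bullet$. I introduce two distinct auxiliary copies $\mathbb{P}^1_*$ and $\mathbb{P}^1_{**}$ representing the two $\mathfrak{g}^\vee$-factors of $S^2\mathfrak{g}^\vee$. Via Gelfand--MacPherson and the first fundamental theorem of invariant theory (as used in \cite{vakil+3}, \cite{vakil+3-II}), the spaces in question become
\begin{align*}
(H^0(X,\mathcal{L})\otimes S^2\mathfrak{g}^\vee)^G &\cong H^0\bigl((\mathbb{P}^1)^{n+2},\mathcal{O}(d_1,\ldots,d_n,2,2)\bigr)^{PGL_2,\,S_2},\\
(H^0(X,\Omega_X\otimes\mathcal{L})\otimes\mathfrak{g}^\vee)^G &\cong \bigoplus_{i=1}^n H^0\bigl((\mathbb{P}^1)^{n+1},\mathcal{O}(d_1,\ldots,d_i-2,\ldots,d_n,2)\bigr)^{PGL_2},
\end{align*}
where $S_2$ swaps $*,**$, and both spaces are spanned by directed graphs on the corresponding vertex set with vertex valences matching the multi-degree, an edge from $a$ to $b$ corresponding to the Plücker minor $p_{ab}=x_ay_b-x_by_a$. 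Under this dictionary, the $i$-th Koszul differential sends a graph $g$ to the $S_2$-symmetrization of $p_{**,i}^2\cdot g$, so the image is spanned by the symmetrizations of graphs on $\{1,\ldots,n,*,**\}$ in which both edges at $**$ end at a common vertex $i\in\{1,\ldots,n\}$.

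The plan is then to show that every $S_2$-symmetric target graph $G$ can be written as such a combination, using Plücker relations. The key tool is the relation $p_{**,*}p_{i,j}=p_{**,i}p_{*,j}-p_{**,j}p_{*,i}$ together with its squared consequence
\[
p_{**,*}^2p_{i,j}^2 \;=\; p_{**,i}^2p_{*,j}^2 + p_{**,j}^2p_{*,i}^2 - 2\,p_{**,i}p_{**,j}p_{*,i}p_{*,j}.
\]
I would perform case analysis on where the two edges incident to $**$ terminate. If both end at a common vertex of $\{1,\ldots,n\}$, $G$ is directly in the image. If both end at $*$, $G$ takes the form $p_{**,*}^2\cdot B$ for some body graph $B$ on $\{1,\ldots,n\}$; the mixed case of one edge at $*$ and one at some $i\in\{1,\ldots,n\}$ reduces to this form modulo image elements after applying the basic Plücker relation and the $S_2$-symmetrization. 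If the two edges end at distinct $i,j\in\{1,\ldots,n\}$, the symmetrization produces graphs of shape $p_{**,i}p_{**,j}p_{*,i}p_{*,j}\cdot B'$, which are not directly in the image and must be handled jointly with the $p_{**,*}^2\cdot B$ case via the squared identity.

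The main obstacle I anticipate lies precisely in this coupling: the squared Plücker identity exhibits the sum of a $p_{**,*}^2p_{i,j}^2\cdot B''$ graph and a $2\,p_{**,i}p_{**,j}p_{*,i}p_{*,j}\cdot B''$ graph as an image element, so reducing either term alone requires an intricate joint induction on the structure of the body graph $B$ on $\{1,\ldots,n\}$. I would invoke additional Plücker relations on four-index subsets of $\{1,\ldots,n,*,**\}$ and, crucially, the stability hypothesis $X^{ss}=X^s$ (equivalently Remark~\ref{no sss}), to guarantee that at each step $B$ contains enough non-trivial edges for the reduction to continue and that no residual rigid configuration obstructs the argument. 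This detailed graph-theoretic case analysis, carried out in the spirit of the methods of \cite{vakil+3-II}, is the technical heart of the proof.
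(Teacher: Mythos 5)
Your setup is sound and essentially equivalent to the paper's: modelling $S^2\mathfrak{g}^\vee$ by two degree-$2$ vertices $*,**$ with an $S_2$-symmetrization is a legitimate variant of the paper's single vertex $w_0$, and your identification of the image of the differential (symmetrizations of graphs in which $**$ carries a double edge to some $i\in\{1,\ldots,n\}$) is correct. But the proof stops exactly where the work begins. You yourself flag the reduction of the coupled terms $p_{**,*}^2\cdot B$ and $p_{**,i}p_{**,j}p_{*,i}p_{*,j}\cdot B''$ as ``the technical heart'' and then only promise ``an intricate joint induction'' using ``additional Pl\"ucker relations'' and the stability hypothesis ``to guarantee that $B$ contains enough non-trivial edges.'' That is a plan, not an argument: no induction variable is specified, no base case is verified, and no mechanism is given by which stability prevents a ``residual rigid configuration.'' As written, the surjectivity is not established.

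Two concrete ideas from the paper's proof are missing and would be needed to complete yours. First, the paper splits $S^2\mathfrak{g}^\vee=V_4\oplus V_0$ as $\mathfrak{sl}_2$-representations and gets surjectivity onto the $V_4$-component \emph{cohomologically}, not combinatorially: Corollary \ref{exactness on V2j} (via the Koszul complex of $\bigcap(s_i=0)$ on $\mathbb{P}^1\times X$ and the vanishing $H^0(M,\mathcal{V}|_M)^G=0$ of Proposition \ref{Koszul on XxP^1}, which in turn uses the numerical consequence $\sum d_i\geq 2n$ of the no-strictly-semistable hypothesis). This reduces the whole problem to showing $(0,H)\in\im(t_2)$ for each graph $H$, i.e.\ to hitting the $V_0$-summand while killing the $V_4$-summand. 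Second, the combinatorial input from stability is not that ``$B$ has enough edges'' but K\H{o}nig's theorem: since no graph in $\mathfrak{F}'_{(0,d_1,\ldots,d_n)}$ admits a $2$-coloring, every $H$ contains an odd cycle, and that odd cycle is what seeds the Pl\"ucker reduction (Steps 1--4 of the paper), whose terminal step is the sign trick $J\equiv -J$, hence $3J\in\im(t_2)$ --- legitimate only in characteristic zero. Without these two ingredients (or genuine substitutes), your case analysis has no reason to terminate, so the proposal has a real gap.
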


In order to prove these two propositions, we will first investigate invariant global sections. Observe that for a given line bundle $\mathcal{O}_{(\mathbb{P}^1)^s}(l_1,\ldots,l_s)$ on $(\mathbb{P}^1)^s$, global sections can be written as $H^0((\mathbb{P}^1)^s,\mathcal{O}_{(\mathbb{P}^1)^s}(l_1,\ldots,l_s))=V_{l_1}\otimes \cdots \otimes V_{l_s}$, where $V_l$ is the irreducible $(l+1)$-dimensional representation of $\mathfrak{sl}_2$. We can also identify $V_l$ with the space of degree $l$ polynomials in two variables, $V_l=\langle x^l,x^{l-1}y,\ldots,y^l \rangle$, with the action given by $g\cdot p(x,y)=p(g^{-1}\cdot (x,y))$, for $g \in SL_2$. In particular, from Gelfand-MacPherson correspondence, the vector space $(V_{l_1}\otimes\cdots \otimes V_{l_s})^{PGL_2}$ can be identified with the elements of multi-degree $(l_1,\ldots,l_s)$ in the homogeneous coordinate ring of the Grassmannian $\Bbbk[p_{ik}]/(p_{ik}p_{rl}-p_{ir}p_{kl}+p_{il}p_{kr})$. 

\begin{remark} \label{V2=g=gv}
For $l=2$, write $V_2=\langle x_0^2,x_0y_0,y_0^2\rangle$. We have $\mathfrak{g}\cong V_2$ as $\mathfrak{g}$-representations, by identifying the bases $\{E,H,F\}$ and $\{y_0^2,2x_0y_0,-x_0^2\}$. If we further use the isomorphism of $\mathfrak{g}$-representations $\mathfrak{g}\cong \mathfrak{g}^\vee$, we get $\{X_0,Y_0,Z_0\}=\{x_0^2,x_0y_0,y_0^2\}$.
\end{remark}

Let us use this identification of $\mathfrak{g}^\vee \cong V_2$. The map $\Omega_X \rightarrow \mathfrak{g}^\vee$ is then determined by the $n$ sections $s_i=x_i^2Z_0-2x_iy_iY_0+y_i^2X_0=(x_0y_i-x_iy_0)^2 \in (H^0(X,\mathcal{O}_X(0,\ldots,2,\ldots,0))\otimes \mathfrak{g}^\vee)^G$, by taking $X_0=x_0^2$, $Y_0=x_0y_0$, $Z_0=y_0^2$, where $\{X_0,Y_0,Z_0\}$ is the basis of $\mathfrak{g}^\vee$ dual to $\{-E,H,F\}$, and $H^0(X,\mathcal{O}_X(0,\ldots,2,\ldots,0))=V_2=\langle x_i^2,x_iy_i,y_i^2\rangle$.

We see further that the symmetric powers $S^m \mathfrak{g}^\vee$ split canonically as $V_{2m} \oplus S^{m-2}\mathfrak{g}^\vee$ as $\mathfrak{g}$-representations, for $m\geq 2$. Indeed, let $\mathbb{P}^2=\mathbb{P}(\mathfrak{g})$, so that $S^m\mathfrak{g}^\vee = H^0(\mathbb{P}(\mathfrak{g}),\mathcal{O}_{\mathbb{P}^2}(m))$, and let $C=\mathbb{P}^1$ be the $G$-invariant conic in $\mathbb{P}^2$ defined by $X_0Z_0-Y_0^2=0$. The curve $C$ is given in coordinates by the rational normal curve embedding $(x^2:xy:y^2)$. Using the tautological short exact sequence and tensoring with $\mathcal{O}_{\mathbb{P}^2}(m)$, we get
\begin{align} \label{tautological on P2}
0 \rightarrow \mathcal{O}_{\mathbb{P}^2}(m-2) \rightarrow \mathcal{O}_{\mathbb{P}^2}(m) \rightarrow \mathcal{O}_C(2m)\rightarrow 0.
\end{align}

Taking global sections we get $0 \rightarrow S^{m-2} \mathfrak{g}^\vee \rightarrow S^m \mathfrak{g}^\vee \rightarrow V_{2m} \rightarrow 0$. By semisimplicity of $\mathfrak{sl}_2$, this splits in a unique way. Observe that the map $S^{m-2}\mathfrak{g}^\vee\rightarrow S^m\mathfrak{g}^\vee$ is multiplication by $X_0Z_0-Y_0^2$, while the map $S^m\mathfrak{g}^\vee\rightarrow V_{2m}$ sends precisely $\{X_0,Y_0,Z_0\}$ to $\{x_0^2,x_0y_0,y_0^2\}$, where we write $V_{2m}=\langle x_0^{2m},\ldots,y_0^{2m}\rangle$. Now consider again the complex $F^\cdot$ from (\ref{koszul F}), with its differentials $H^0(\Omega_X^{j-m+1}\otimes \mathcal{L})\otimes S^{m-1}\mathfrak{g}^\vee\rightarrow H^0(\Omega_X^{j-m}\otimes \mathcal{L})\otimes S^{m}\mathfrak{g}^\vee$. Using the splittings $S^r \mathfrak{g}^\vee = V_{2r} \oplus S^{r-2}\mathfrak{g}^\vee$, compose with the inclusion $V_{2m-2}\hookrightarrow S^{m-1}\mathfrak{g}^\vee$ and the projection $S^m\mathfrak{g}^\vee\rightarrow V_{2m}$ to get a map $H^0(\Omega_X^{j-m+1}\otimes \mathcal{L})\otimes V_{2m-2}\rightarrow H^0(\Omega_X^{j-m}\otimes \mathcal{L})\otimes V_{2m}$. That is, the map making the following diagram commute
\[
\begin{tikzcd}[cramped,sep=scriptsize]
H^0(\Omega_X^{j-m+1}\otimes \mathcal{L})\otimes S^{m-1}\mathfrak{g}^\vee \arrow{r}\arrow{d} &H^0(\Omega_X^{j-m}\otimes \mathcal{L})\otimes S^{m}\mathfrak{g}^\vee\arrow{d}\\
H^0(\Omega_X^{j-m+1}\otimes \mathcal{L})\otimes V_{2m-2}\arrow{r}\arrow[hook, bend right=33]{u} &H^0(\Omega_X^{j-m}\otimes \mathcal{L})\otimes V_{2m}\arrow[hook, bend right=33]{u}.
\end{tikzcd}
\]
This way we get a new complex
\begin{align} \label{F bar explicit}
\bar{F}^\cdot =\left[ 0\rightarrow H^0(X,\Omega^j_X \otimes \mathcal{L}) \rightarrow H^0(X,\Omega_X^{j-1} \otimes \mathcal{L})\otimes V_2 \rightarrow \cdots \rightarrow H^0(X,\mathcal{L})\otimes V_{2j}\rightarrow 0 \right],
\end{align}
which we can think of as a ``partial'' version of $F^\cdot$. Observe that, by commutativity of the diagram above, $\bar{F}^\cdot$ is indeed a chain complex.

By the discussion above, the differential maps in $\bar{F}^\cdot$ correspond to multiplication by $s_i=(x_0y_i-x_iy_0)^2$, where $x_i,y_i$ are coordinates in the $i$-th component, and $x_0,y_0$ correspond to the terms $V_{2m}=\langle x_0^{2m},\ldots,y_0^{2m}\rangle$. In what follows next, we will study the complex $\bar{F}^\cdot$, and then we will see that from this we can get back some information about the original complex $F^\cdot$ from (\ref{koszul F}).

\subsection{Computations in $(\mathbb{P}^1)^{n+1}$}

Now for $j>0$, we consider the diagonal action of $PGL_2$ on $(\mathbb{P}^1)^{n+1}=\mathbb{P}^1 \times X$. Using coordinates $x_0,y_0;x_i,y_i$, take $$s_i=(x_0y_i-x_iy_0)^2 \in H^0(\mathbb{P}^1\times X,\mathcal{O}(2;0,\ldots,2,\ldots,0))^{PGL_2}$$ for $i=1,\ldots,n$. We choose the polarization $\mathcal{V}=\mathcal{O}(2j;d_1,\ldots,d_n) = \mathcal{O}_{\mathbb{P}^1}(2j) \boxtimes \mathcal{L}$.

\begin{proposition} \label{Koszul on XxP^1}
Suppose $\mathcal{L}=\mathcal{O}_X(d_1,\ldots,d_n)$ is a polarization on $X$ with no strictly semi-stable locus, and let $\mathcal{V}=\mathcal{O}_{\mathbb{P}^1}(2j) \boxtimes \mathcal{L}$ as above. Let $M$ be the scheme-theoretic intersection $\bigcap (s_i=0) \subset \mathbb{P}^1 \times X$. Then $M$ is a local complete intersection and $H^0(M,\mathcal{V}|_M)$ has no $PGL_2$-invariants.
\end{proposition}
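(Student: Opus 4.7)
The plan is to reduce $H^0(M,\mathcal{V}|_M)^{PGL_2} = 0$ to a combinatorial statement about the polarization via the $\mathcal{I}_\Delta$-adic filtration of $\mathcal{O}_M$. The local complete intersection claim is immediate from a dimension count: since each $s_i = \tau_i^2$ with $\tau_i = x_iy_0 - y_ix_0$ vanishes set-theoretically on the small diagonal $\Delta \cong \mathbb{P}^1$, one has $M_{\mathrm{red}} = \Delta$, so $\dim M = 1$; this matches the expected codimension $n = \dim W - 1$ in $W = \mathbb{P}^1 \times X$, which is smooth, so $M$ is a local complete intersection.

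For the vanishing of invariants, let $\bar{I}_\Delta \subset \mathcal{O}_M$ denote the image of the ideal sheaf $\mathcal{I}_\Delta \subset \mathcal{O}_W$, and consider the finite filtration
\[
\mathcal{V}|_M \supset \bar{I}_\Delta\mathcal{V}|_M \supset \bar{I}_\Delta^2\mathcal{V}|_M \supset \cdots \supset \bar{I}_\Delta^{n+1}\mathcal{V}|_M = 0.
\]
Because $\tau_i^2 = 0$ in $\mathcal{O}_M$, the graded pieces are computed by exterior (rather than symmetric) powers of the conormal bundle: $\bar{I}_\Delta^k/\bar{I}_\Delta^{k+1} \cong \Lambda^k N^\vee_{\Delta/W}$. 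The divisors $D_i = V(\tau_i)$ are pairwise transverse along $\Delta$ and each is $PGL_2$-invariant (as $\tau_i$ is a Pl\"ucker coordinate), so $N^\vee_{\Delta/W} \cong \bigoplus_{i=1}^n \mathcal{O}_\Delta(-D_i)|_\Delta = \bigoplus_{i=1}^n \mathcal{O}_\Delta(-2)$ as $PGL_2$-equivariant bundles. Tensoring with $\mathcal{V}|_\Delta = \mathcal{O}_\Delta(2j+d)$, where $d = \sum_i d_i$, each graded piece becomes $\mathcal{O}_\Delta(2j+d-2k)^{\oplus \binom{n}{k}}$.

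Taking global sections and $PGL_2$-invariants of the associated long exact sequences (which remain exact by reductivity of $PGL_2$), an induction starting from the innermost layer $\bar{I}_\Delta^n\mathcal{V}|_M$ reduces the claim to showing $V_{2j+d-2k}^{PGL_2} = 0$ for every $0 \le k \le n$. Since $V_l^{PGL_2} = 0$ whenever $l > 0$, this amounts to the inequality $2j + d - 2k > 0$, i.e.\ $j + d/2 > n$; because $j \ge 1$, it is enough to show that the no-strictly-semistable-locus hypothesis forces $d \ge 2n$.

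The main obstacle is this last combinatorial statement, which I would isolate as a lemma: if $d_1, \ldots, d_n$ are positive integers with $d = \sum d_i$ even and $d \le 2n - 2$, then some $I \subsetneq \{1,\ldots,n\}$ satisfies $\sum_{i \in I} d_i = d/2$, giving a strictly semistable configuration by Remark \ref{no sss} and contradicting the hypothesis. To prove it, let $r = \#\{i : d_i = 1\}$; the bound $d \le 2n - 2$ forces $r \ge 2n - d \ge 2$, and a short calculation using $\sum_{d_i \ge 2} d_i = d - r$ together with $d \le 2n - 2$ shows that every $d_i$ is bounded above by $r$. A greedy partial-sum argument on the values $d_i \ge 2$ then produces a subset sum lying in the interval $[d/2 - r, d/2]$, which we adjust to exactly $d/2$ by appending an appropriate number of the $1$'s. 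With $d \ge 2n$ in hand, the inequality above gives $2j + d - 2k \ge 2$ for all $k \le n$, completing the proof.
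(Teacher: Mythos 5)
Your proposal is correct and follows the same overall strategy as the paper: filter $\mathcal{V}|_M$ so that the graded pieces become line bundles $\mathcal{O}_\Delta(2j+\sum_i d_i-2k)$ on the small diagonal, use left-exactness of invariant global sections to reduce to the vanishing of $V_{2j+\sum d_i-2k}^{PGL_2}$, and close with the inequality $\sum d_i\geq 2n$ forced by the absence of strictly semi-stable points. The two places where you diverge are in packaging rather than substance. First, the paper does not use the $\mathcal{I}_\Delta$-adic filtration directly but instead peels off one divisor at a time via the sequences $0\to\mathcal{O}_{D_i}(-D_i)\to\mathcal{O}_{2D_i}\to\mathcal{O}_{D_i}\to 0$, inducting on how many of the $2D_i$ have been replaced by $D_i$; this reaches the same collection of line bundles $\mathcal{O}_\Delta(2j+\sum d_i-2|I|)$ without needing the identification $\bar{\mathcal{I}}_\Delta^k/\bar{\mathcal{I}}_\Delta^{k+1}\cong\Lambda^k N^\vee_{\Delta/W}$ — a correct but slightly glossed step in your write-up, since it relies on the leading forms $t_i^2$ of the $\tau_i^2$ generating the initial ideal in $\operatorname{gr}_{\mathcal{I}_\Delta}\mathcal{O}_W$ (true here because they form a regular sequence in $\mathrm{Sym}(N^\vee_{\Delta/W})$, but worth a word). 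Second, for the combinatorial lemma the paper argues by contradiction with an explicit alternating partition exploiting the parity of $\sum d_i$, whereas your greedy subset-sum argument (using $\max_i d_i\leq r$ where $r$ is the number of $d_i$ equal to $1$) is an equally valid and arguably more transparent route to the same equal partition contradicting Remark \ref{no sss}.
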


\begin{proof}
Write $D_i=(x_0y_i-x_iy_0=0)$ so that $M= \bigcap 2D_i$, while $\bigcap D_i$ is the small diagonal $\mathbb{P}^1 \subset (\mathbb{P}^1)^{n+1}$. Then $M=\bigcap 2D_i$ is a local complete intersection, having codimension $n$. For a reduced divisor $D \subset V$, we have a tautological short exact sequence
\begin{align} \label{tautological}
0 \rightarrow \mathcal{O}_D(-D) \rightarrow \mathcal{O}_{2D} \rightarrow \mathcal{O}_D \rightarrow 0.
\end{align}

\begin{claim}
For every $0 \leq m \leq n$ and every $I \subset \{1,\ldots,m\}$, the sheaf
\begin{align} \label{no invs}
\mathcal{O}_{\bigcap_{i \leq m}D_i\cap \bigcap_{i>m}2D_i} (-\sum_{i \in I} D_i) \otimes \mathcal{V}
\end{align}
has no $PGL_2$-invariant global sections.
\end{claim}
Given the claim, the proposition is proved by taking $m=0$ in (\ref{no invs}). To prove the claim we use (\ref{tautological}) on $\bigcap_{i \leq m+1} D_i \cap \bigcap_{i>m+1} 2D_i \subset \bigcap_{i \leq m} D_i \cap \bigcap_{i>m+1} 2D_i$, to get
\begin{align*}
0 \rightarrow \mathcal{O}_{\bigcap_{i \leq m+1} D_i \cap \bigcap_{i>m+1} 2D_i}(-D_{m+1}) \rightarrow \mathcal{O}_{\bigcap_{i \leq m} D_i \cap \bigcap_{i>m} 2D_i} \rightarrow \mathcal{O}_{\bigcap_{i \leq m+1} D_i \cap \bigcap_{i>m+1} 2D_i} \rightarrow 0.
\end{align*}
Now tensor with $\mathcal{V}(-\sum_{i \in I} D_i)$ and take $PGL_2$-invariant global sections. The claim will then be proved if we show $\mathcal{O}_{\bigcap_{i \leq m+1} D_i \cap \bigcap_{i>m+1} 2D_i}(-\sum_{i \in I'}D_i)$ has no invariant global sections for every $I'\subset \{1,\ldots,m+1\}$. That is, the claim is true for $m$ if it is true for $m+1$. Therefore, we can do induction on $n-m$, so that all we need to show is that
\begin{align*}
H^0(\mathcal{O}_{\bigcap_{i \leq n} D_i}(-\sum_{i \in I} D_i)\otimes \mathcal{V})^G=0
\end{align*}
for any $I \subset \{1,\ldots,n\}$.

Recall $\bigcap_{i \leq n} D_i = \mathbb{P}^1$ is the small diagonal and $\mathcal{O}_{(\mathbb{P}^1)^{n+1}}(-D_i)=\mathcal{O}(-1;0,\ldots,-1,\ldots,0)$, so that $\mathcal{O}_{\bigcap_{i \leq n} D_i}(-\sum_{i \in I} D_i)\otimes \mathcal{V} = \mathcal{O}_{\mathbb{P}^1}(2j+\sum_{i \leq n} d_i -2|I|)$. The $PGL_2$-invariant global sections of this sheaf are homogeneous polynomials in $x$ and $y$ of degree $2j+\sum_{i \leq n} d_i -2|I|$ that are restrictions to the small diagonal of polynomials in $p_{ik}=x_iy_k-x_ky_i$. Of course, any such polynomial will restrict to $0$ in the diagonal, unless it has degree $0$. But $2j+\sum_{i \leq n} d_i-2|I|$ cannot be zero. This follows from the following claim.
\begin{claim}
$\sum_{i=1}^nd_i \geq 2n$. In particular $2j+\sum_{i=1}^n d_i-2|I| >0$ for every $I\subset \{1,\ldots,n\}$.
\end{claim}
Let us prove this claim. Without loss of generality, we may assume $d_1\leq \ldots \leq d_n$. Choose $0\leq m \leq n$ such that $d_1=\ldots=d_m=1$, $d_{m+2}\geq 2$ and $m$ has the same parity as $n$. Observe that, since $\sum d_i$ is even and $\mathcal{L}$ has no strictly semi-stable locus, as a consequence of Remark \ref{no sss} we must have
\begin{align}\label{numerical d_i}
d_n+d_{n-2}+\ldots+d_{m+2}>d_{n-1}+\ldots+d_{m+3}+d_{m+1}+m.
\end{align}
In fact, if $d_n+d_{n-2}+\ldots+d_{m+2}-(d_{n-1}+\ldots+d_{m+3}+d_{m+1})=r \leq m$, we would have $d_n+d_{n-2}+\ldots+d_{m+2}=(d_{n-1}+\ldots+d_{m+3}+d_{m+1})+(d_1+\ldots+d_r)$, and then writing each of the remaining $d_{r+1}=d_{r+2}=\ldots=d_m=1$ at either side of this equation we would get $\sum_{i\notin I}d_i=\sum_{i\in I}d_i$, where $I=\{1,\ldots,r\}\cup\{r+1,r+3,\ldots\}\cup\{m+1,m+3,\ldots,n-1\}$, a contradiction. In particular, from (\ref{numerical d_i}) we have $d_n>d_{m+1}+m$. Then $\sum d_i = m+d_{m+1}+\sum_{i=m+2}^{n-1}d_i+d_n > 2(m+d_{m+1})+\sum_{i=m+2}^{n-1}d_i$. Since $d_{m+1} \geq 1$ and $d_{n-1}\geq \ldots \geq d_{m+2} \geq 2$, this is at least $2m+2+2(n-m-2)=2n-2$. Thus $\sum_{i=1}^n d_i>2n-2$, so in fact $\sum_{i=1}^n d_i \geq 2n$. This completes the proof.
\end{proof}

\begin{corollary} \label{exactness on V2j}
With the same hypotheses, $\mathcal{H}^i(\bar{F^\cdot})^G=0$ for $i>1$.
\end{corollary}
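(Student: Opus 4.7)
The plan is to mimic the argument of Proposition \ref{if lci}, but carried out on $\mathbb{P}^1 \times X$ rather than $\mathbb{P}(\mathfrak{g}) \times X$. The identifications $V_{2m} = H^0(\mathbb{P}^1, \mathcal{O}_{\mathbb{P}^1}(2m))$ and $s_i = (x_0 y_i - x_i y_0)^2$ built into the definition of $\bar{F}^\cdot$ realize its differentials as the Koszul differentials of the section $s = \sum s_i$ of $T_X \boxtimes \mathcal{O}_{\mathbb{P}^1}(2)$ on $\mathbb{P}^1 \times X$. Accordingly, I would form the augmented Koszul complex $\tilde{K}^\cdot$ with $\tilde{K}^p = \Omega_X^{-p} \boxtimes \mathcal{O}_{\mathbb{P}^1}(2p)$ for $-n \leq p \leq 0$ and $\tilde{K}^1 = \mathcal{O}_M$. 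By Proposition \ref{Koszul on XxP^1}, $M$ is a local complete intersection of the expected codimension, so $\tilde{K}^\cdot$ is acyclic; tensoring with $\mathcal{V} = \mathcal{O}_{\mathbb{P}^1}(2j) \boxtimes \mathcal{L}$ gives a complex with vanishing hypercohomology.

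Next, I would run the hypercohomology spectral sequence $E_1^{p, q} = H^q(\mathbb{P}^1 \times X, \tilde{K}^p \otimes \mathcal{V}) \Rightarrow 0$. Bott vanishing on $X = (\mathbb{P}^1)^n$ and K\"unneth collapse the $E_1$ page to two rows plus the isolated column $p = 1$: explicitly $E_1^{p, 0} = H^0(X, \Omega_X^{-p} \otimes \mathcal{L}) \otimes V_{2j + 2p}$ for $-j \leq p \leq 0$, $E_1^{p, 1} = H^0(X, \Omega_X^{-p} \otimes \mathcal{L}) \otimes H^1(\mathbb{P}^1, \mathcal{O}(2j + 2p))$ for $-n \leq p \leq -j - 1$, and $E_1^{1, q} = H^q(M, \mathcal{V}|_M)$. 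The restriction of the $q = 0$ row to $-j \leq p \leq 0$, together with its differential, is by construction the shifted complex $\bar{F}^\cdot[-j]$. With only two nonzero rows, any higher differential is forced into the shape $d_2 : E_2^{p, 1} \to E_2^{p + 2, 0}$, and since $E_2^{p, 1}$ vanishes for $p \geq -j$ such a $d_2$ can affect the $q = 0$ row only at $p = -j, -j + 1$. Thus $E_\infty^{p, 0} = E_2^{p, 0}$ for $-j + 2 \leq p \leq 0$, and convergence to $0$ forces all these to vanish.

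For $-j + 2 \leq p \leq -1$ the identification $E_2^{p, 0} = \mathcal{H}^{p + j}(\bar{F}^\cdot)$ is immediate, producing $\mathcal{H}^i(\bar{F}^\cdot) = 0$ for $2 \leq i \leq j - 1$. The main obstacle is the top degree $i = j$, corresponding to $p = 0$: because $E_1^{\cdot, 0}$ has the extra term $E_1^{1, 0} = H^0(M, \mathcal{V}|_M)$ glued after $\bar{F}^j$, the vanishing of $E_2^{0, 0}$ only says $\ker(\bar{F}^j \to H^0(M, \mathcal{V}|_M)) = \im(\bar{F}^{j - 1} \to \bar{F}^j)$, so $\mathcal{H}^j(\bar{F}^\cdot)$ merely injects into $H^0(M, \mathcal{V}|_M)$. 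This is precisely where the second conclusion of Proposition \ref{Koszul on XxP^1} enters: taking $G$-invariants (exact, since $G = PGL_2$ is reductive) and applying $H^0(M, \mathcal{V}|_M)^G = 0$ forces $\mathcal{H}^j(\bar{F}^\cdot)^G = 0$. The remaining cases $j \leq 1$ and $i > j$ are vacuous, so $\mathcal{H}^i(\bar{F}^\cdot)^G = 0$ for every $i > 1$.
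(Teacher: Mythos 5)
Your proposal is correct and follows essentially the same route as the paper: restrict the augmented Koszul complex of $s=\sum s_i$ on $\mathbb{P}^1\times X$ tensored with $\mathcal{V}$, use Bott vanishing on $X$ and K\"unneth to reduce the $E_1$ page to two rows plus the $p=1$ column, read off $\mathcal{H}^i(\bar F^\cdot)=0$ for $1<i<j$ from degeneration, and kill $\mathcal{H}^j(\bar F^\cdot)^G$ using $H^0(M,\mathcal{V}|_M)^G=0$ from Proposition \ref{Koszul on XxP^1}. Your explicit tracking of which $d_2$'s can reach the $q=0$ row, and the observation that $\mathcal{H}^j(\bar F^\cdot)$ injects into $H^0(M,\mathcal{V}|_M)$ before taking invariants, are just slightly more detailed renderings of the paper's argument.
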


\begin{proof}
Since $M=\bigcap (s_i=0)$ is a local complete intersection, the augmented Koszul complex determined by $s_1,\ldots,s_n$,
\begin{align*}
0\rightarrow\mathcal{O}(-\sum_{i=1}^n 2D_i)\rightarrow\cdots\rightarrow\bigoplus\mathcal{O}(-2D_i)\rightarrow\mathcal{O}\rightarrow \mathcal{O}_M\rightarrow 0
\end{align*}
is acyclic, and so is the complex
\begin{align}\label{explicit Koszul complex on XxP^1}
K^\cdot=\left[ 0\rightarrow\mathcal{V}(-\sum 2 D_{i}) \rightarrow \cdots \rightarrow \bigoplus \mathcal{V}(-2 D_{i}) \rightarrow \mathcal{V} \rightarrow \mathcal{V} \otimes \mathcal{O}_M\rightarrow 0 \right].
\end{align}
We consider this complex having nonzero terms in degrees $-n$ to $1$. This means that for $-n\leq p\leq 0$, the term $K^p$ is precisely
\begin{align*}
\bigoplus_{|I|=-p}\mathcal{V}(-\sum_{i\in I}2D_i)=\mathcal{O}_{\mathbb{P}^1}(2j+2p)\boxtimes(\Omega_X^{-p}\otimes \mathcal{L}).
\end{align*}
Take the spectral sequence $E_1^{pq}=H^q((\mathbb{P}^1)^{n+1},K^p)$, which converges to $H^{p+q}((\mathbb{P}^1)^{n+1},K^\cdot)=0$. We get
\begin{align}\label{spectral sequence K}
H^q((\mathbb{P}^1)^{n+1},K^p)= 
\begin{cases}
H^0(X,\Omega_X^{-p}\otimes \mathcal{L}) \otimes V_{2j+2p} & \text{if} \quad q=0,\: -j\leq p\leq 0 \\
H^0(X,\Omega_X^{-p}\otimes \mathcal{L}) \otimes H^1(\mathbb{P}^1,\mathcal{O}_{\mathbb{P}^1}(2j+2p)) & \text{if} \quad q=1,\: p<-j \\
H^q(M,\mathcal{V}|_M) & \text{if} \quad p=1 \\
0 & \text{otherwise}
\end{cases}
\end{align}
and the sequence has the following shape
\[
\begin{tikzcd}[cramped,sep=scriptsize]
\cdots\arrow[r] &E_1^{-j-2,1}\arrow[r] &E_1^{-j-1,1}\arrow[r] &0\arrow[r]          &\cdots &\cdots\arrow[r] &0 \arrow[r] &H^1(M,\mathcal{V}|_M)\\
                &\cdots\arrow[r]       & 0\arrow[r]           &E_1^{-j,0}\arrow[r] &E_1^{-j+1,0}\arrow[r] &\cdots \arrow[r] &E_1^{0,0} \arrow[r]  &H^0(M,\mathcal{V}|_M).
\end{tikzcd}
\]

The complex $\bar{F}^\cdot$ from (\ref{F bar explicit}) is the same as the (shifted) naive truncation of $E_1^{\cdot,0}[-j]$ obtained by omitting the last term $H^0(M,\mathcal{V}|_M)$ of $E_1^{\cdot,0}$, since the differentials are determined precisely by the sections $s_i$.

We see that for $q=0$ and $p>-j+1$, the sequence degenerates at $E_2$ and we get $0=H^{i-j}(\mathbb{P}^1\times X,K^\cdot)=\mathcal{H}^i(\bar{F}^\cdot)$, for $1< i< j$ (even before taking invariants). Further, since $H^0(M,\mathcal{V}|_M)^{G}=0$ by the previous proposition, the complex of $G$-invariants $(E_1^{\cdot,0}[-j])^G$ is precisely $(\bar{F}^\cdot)^G$, so $\mathcal{H}^j((\bar{F}^\cdot)^G)=0$ too, that is, $\mathcal{H}^j(\bar{F}^\cdot)^G=0$.
\end{proof}

\subsection{Directed graphs as invariant sections}

Given a $G$-linearized ample line bundle $\mathcal{L}=\mathcal{O}_X(d_1,\ldots,d_n)$ on $X$, let us use the identifications $H^0(X,\mathcal{L})=V_{d_1}\otimes \cdots \otimes V_{d_n}$, and $H^0(X,\Omega_X \otimes \mathcal{L})=\bigoplus_{i=1}^n V_{d_1}\otimes\cdots\otimes V_{d_i-2}\otimes\cdots \otimes V_{d_n}$. We also use $\mathfrak{g}^\vee=V_2$ and $S^2\mathfrak{g}^\vee=V_0\oplus V_4$ as $\mathfrak{g}$-representations. Then to show Propositions \ref{graphs for H^1} and \ref{graphs for H^2}, we need to investigate the maps 
\begin{align} \label{map for H^1}
\bigoplus_{i=1}^n (V_{d_1}\otimes\cdots\otimes V_{d_i-2}\otimes\cdots \otimes V_{d_n})^{G}\overset{t_1}\longrightarrow (V_2\otimes V_{d_1}\otimes \cdots \otimes V_{d_n})^{G}
\end{align}
and
\begin{align}\label{map for H^2}
\bigoplus_{i=1}^n (V_2\otimes V_{d_1}\otimes\cdots\otimes V_{d_i-2}\otimes\cdots \otimes V_{d_n})^{G}\overset{t_2}\longrightarrow (V_4\otimes V_{d_1}\otimes \cdots \otimes V_{d_n})^{G} \oplus (V_0\otimes V_{d_1}\otimes \cdots \otimes V_{d_n})^{G}.
\end{align}

Namely, we need to show that both maps (\ref{map for H^1}) and (\ref{map for H^2}) are surjective. In view of Gelfand-MacPherson correspondence, we will work with these invariants using the language of graphs (as in \cite{vakil+3} and \cite{vakil+3-II}).

\begin{notation}
Let $J$ be a directed graph with vertices $V(J)$ and edges $E(J)$. Let $w \in V(J)$ be a vertex. By $\deg (w)$ we mean the number of edges touching $w$. We say that two vertices $w$ and $v$ are adjacent if there is an edge between them. An edge going from $w$ to $v$ will be denoted by $w\rightarrow v$.
\end{notation}

A directed graph $J$ can be represented by a $2 \times m$ tableau, where $m= |E(J)|$. A diagram
\begin{align*}
\begin{bmatrix}
a_1 & \ldots & a_m \\
b_1 & \ldots & b_m
\end{bmatrix}
\end{align*}
represents the graph with edges $a_i\rightarrow b_i$.

\begin{definition}
Let $l=(l_1,\ldots,l_r) \in \mathbb{Z}^r_{\geq 0}$, with $\sum l_i$ even. We call $\mathfrak{F}_l$ the free vector space generated by directed graphs $J$ having $r$ vertices, say $V(J)=\{w_1,\ldots,w_r\}$, with degrees $\deg(w_i)=l_i$. We denote by $\mathfrak{F}'_l$ the quotient of $\mathfrak{F}_l$ by the following relations:
\begin{enumerate}
\renewcommand{\theenumi}{\alph{enumi}}
\item If $K$ is obtained from $J$ by reversing the direction of one edge, then $K=-J$. In particular, any graph having a self-loop is equal to zero in $\mathfrak{F}'_l$.
\item The relation $J=H+K$, whenever $H$ and $K$ are obtained by replacing a $2\times 2$ submatrix as follows:
\begin{align*}
\begin{bmatrix}
\cdots & a & \cdots & b & \cdots \\
\cdots & c & \cdots & d & \cdots
\end{bmatrix}
=
\begin{bmatrix}
\cdots & a & \cdots & c & \cdots \\
\cdots & b & \cdots & d & \cdots
\end{bmatrix}
+
\begin{bmatrix}
\cdots & a & \cdots & b & \cdots \\
\cdots & d & \cdots & c & \cdots
\end{bmatrix}
\end{align*}
\end{enumerate}
\end{definition}

We observe that the space $\mathfrak{F}'_l$ is exactly identified with the vector space of invariants $(V_{l_1} \otimes \cdots \otimes V_{l_r})^{PGL_2}$. A Pl\"ucker minor $p_{ik}=x_iy_k-x_ky_i$ corresponds to an edge $w_i \rightarrow w_k$, and the relations defining $\mathfrak{F}'_l$ are precisely $p_{ik}=-p_{ki}$ and the Pl\"ucker relations. Pl\"ucker relation is drawn as follows:

%now Plucker relations
\begin{center}
\begin{tikzpicture}[>=stealth',semithick,inner sep=0.4mm]
\tikzstyle{every label}=[font=\footnotesize]
\node (A) at ( 0,0) [circle,draw,fill] {};
\node (B) at ( 0,1) [circle,draw,fill] {};
\node (C) at ( 1,0) [circle,draw,fill] {};
\node (D) at ( 1,1) [circle,draw,fill] {};
%\path (A) edge node{} (B);
\draw [->] (A.north) -- (B.south);
\draw [->] (C.north) -- (D.south);
\node (e) at (1.5,0.5) [circle] {$=$};
\node (E) at ( 2,0) [circle,draw,fill] {};
\node (F) at ( 2,1) [circle,draw,fill] {};
\node (G) at ( 3,0) [circle,draw,fill] {};
\node (H) at ( 3,1) [circle,draw,fill] {};
\draw [->] (E.east) -- (G.west);
\draw [->] (F.east) -- (H.west);
\node (f) at (3.5,0.5) [circle] {$+$};
\node (I) at ( 4,0) [circle,draw,fill] {};
\node (J) at ( 4,1) [circle,draw,fill] {};
\node (K) at ( 5,0) [circle,draw,fill] {};
\node (L) at ( 5,1) [circle,draw,fill] {};
\draw [->] (I.north east) -- (L.south west);
\draw [->] (K.north west) -- (J.south east);
\end{tikzpicture}
\end{center}

If $\sum l_i$ is odd or if one $l_i<0$, we just set $\mathfrak{F}'_l=0$. Then for fixed $r$, we can put all the spaces $\mathfrak{F}'_l$ together in a $\mathbb{Z}^r$-graded ring $\mathfrak{F}'=\bigoplus_{l \in \mathbb{Z}^r}\mathfrak{F}'_l$. This is the same construction as the ring $R$ defined in the proof of Lemma \ref{L'=L}. In this language, the product of two graphs $J_1$ and $J_2$ consists of a graph having edges $E(J_1J_2)=E(J_1)\cup E(J_2)$. An element $J\in\mathfrak{F}'_l$ is a graph if it is written as a product of Pl\"ucker minors $p_{ik}$. In general, an element of $\mathfrak{F}'_l$ is a polynomial in $p_{ik}$, that is, a linear combination of graphs.

We will be mostly interested in the spaces $\mathfrak{F}'_l$ when $l=(2m,d_1,\ldots,d_n)$. For the graphs in $\mathfrak{F}'_l$, we label the $n+1$ vertices as $w_0,w_1,\ldots,w_n$, so that $\deg{w_0}=2m$, $\deg{w_i}=d_i$ for $i\geq 1$. We call $V(J)_0$ the set of vertices adjacent to $w_0$, and for $w_i$ we call $e(w_0,w_i)$ the number of edges between $w_0$ and $w_i$.

\begin{definition}
Let $l=(2m,d_1,\ldots,d_n)$ and let $J$ be a directed graph in $\mathfrak{F}_l$, as above. A $2$-coloring of $J$ is an assignment $c:V(J)-\{w_0\} \rightarrow \{0,1\}$ such that $c(a) \neq c(b)$ for every two adjacent vertices $a$ and $b$, and also $\sum_{w_i \in c^{-1}(0)} e(w_0,w_i)=\sum_{w_i \in c^{-1}(1)}e(w_0,w_i)=m$.
\end{definition}

\begin{example}
The graph given by
\begin{align*}
\begin{bmatrix}
w_0 & w_0 & w_0 & w_0 & w_1 & w_2 & w_3 \\
w_1 & w_1 & w_2 & w_3 & w_2 & w_4 & w_4
\end{bmatrix}
\end{align*}
admits a $2$-coloring:

\begin{center}
\tikzstyle{every label}=[font=\footnotesize]
\begin{tikzpicture}[>=stealth',semithick,inner sep=0.4mm]
\node (A) at ( 0,0) [star,label=below:$w_0$,draw] {};
\node (B) at ( 0,1) [circle,label=above:$w_2$,draw] {};
\node (C) at ( -1,0) [circle,label=below left:$w_3$,draw] {};
\node (D) at ( -1,1) [circle,label=above left:$w_4$,draw,fill] {};
\node (E) at (1,0) [circle,label=below right:$w_1$,draw,fill]{};
\draw [->] (A.north) -- (B.south);
\draw [->] (A.west) -- (C.east);
\draw [->] (B.west) -- (D.east);
\draw [->] (C.north) -- (D.south);
\draw [->] (A.north east) -- (E.north west);
\draw [->] (A.south east) -- (E.south west);
\draw [->] (E.north west) -- (B.south east);
\end{tikzpicture}
\end{center}

\end{example}

If $m=1$, we can think of a $2$-coloring as a bipartition of the graph obtained by deleting $w_0$ and replacing the edges coming from it by an edge joining the two vertices $w_{i_1},w_{i_2} \in V(J)_0$. In this bipartition $w_{i_1}$ and $w_{i_2}$ must be in different blocks. In particular, if a graph $J\in \mathfrak{F}'_{(2,d_1,\ldots,d_n)}$ has a double edge coming from $w_0$, that is, if $w_{i_1}=w_{i_2}$, then $J$ cannot admit a $2$-coloring. For coloring purposes, the directions of the edges are irrelevant.

\begin{remark}
Suppose $\mathcal{L}=\mathcal{O}_X(d_1,\ldots,d_n)$ is such that $X^{ss}=X^s$. Then no graph $J \in \mathfrak{F}_{(2m,d_1,\ldots,d_n)}$ admits a $2$-coloring. Indeed, if $J$ had a $2$-coloring, then we can call $I=\{i \mid c(w_i)=0\} \subset \{1,\ldots,n\}$, so that $\sum_{i \in I} d_i = \sum_{i \notin I} d_i$.
\end{remark}

\begin{lemma}
The image of the map $t_1$ from (\ref{map for H^1}) consists of the vector subspace generated by graphs having a double edge coming from $w_0$.
\end{lemma}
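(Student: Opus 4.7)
The plan is simply to unwind the identifications that have been set up in the preceding paragraphs. Under the isomorphism $\mathfrak{g}^\vee \cong V_2$ with basis $\{X_0,Y_0,Z_0\} = \{x_0^2,x_0y_0,y_0^2\}$, an invariant in $(V_2 \otimes V_{d_1} \otimes \cdots \otimes V_{d_n})^G$ corresponds via Gelfand--MacPherson to an element of $\mathfrak{F}'_{(2,d_1,\ldots,d_n)}$, i.e., a linear combination of directed graphs on vertices $w_0,w_1,\ldots,w_n$ with $\deg w_0 = 2$ and $\deg w_i = d_i$. The map $t_1$ restricted to the $i$-th summand is multiplication by the section $s_i$, which the paper has just computed to equal $(x_0 y_i - x_i y_0)^2 = p_{0i}^2$. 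In graph language, this sends a graph $J' \in \mathfrak{F}'_{(d_1,\ldots,d_i-2,\ldots,d_n)}$ (on vertices $w_1,\ldots,w_n$) to the graph $p_{0i}^2 \cdot J'$ obtained from $J'$ by attaching a new vertex $w_0$ joined to $w_i$ by a double edge.

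From this description both inclusions are immediate. For the forward inclusion, any element in the image of $t_1$ is a sum over $i$ of linear combinations of graphs of the form $p_{0i}^2 \cdot J'$, each of which visibly has a double edge from $w_0$. Conversely, any generator of the target subspace is a graph $J$ containing a double edge $p_{0i}^2$ at $w_0$ for some $i$; deleting $w_0$ together with its two edges produces a graph $J'$ on $w_1,\ldots,w_n$ with degree sequence $(d_1,\ldots,d_i-2,\ldots,d_n)$, so $J' \in \mathfrak{F}'_{(d_1,\ldots,d_i-2,\ldots,d_n)}$ and $t_1(J') = J$ when $J'$ is placed in the $i$-th summand.

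There is no real obstacle: the lemma is pure translation of the map $t_1$ into the combinatorial language of graphs, and its role is to prepare the framework in which Propositions \ref{graphs for H^1} and \ref{graphs for H^2} can be attacked combinatorially. The only point to be careful about is that equality of two graphs in $\mathfrak{F}'_{(2,d_1,\ldots,d_n)}$ may be witnessed by Plücker relations that do not individually preserve the ``double edge at $w_0$'' property; but since we only claim that the relevant subspace is \emph{spanned by} such graphs (not filtered by this property), the argument above goes through verbatim.
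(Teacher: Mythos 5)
Your proof is correct and takes essentially the same route as the paper: both identify $t_1$ on the $i$-th summand with multiplication by $s_i = p_{0i}^2$, i.e., with attaching a double edge from $w_0$ to $w_i$, from which the two inclusions between the image and the span of graphs with a double edge at $w_0$ are immediate. Your closing caveat about Pl\"ucker relations is a sensible observation but, as you say, harmless since the claim is only about a spanning set.
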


\begin{proof}
By the explicit description of $\bar{F}^\cdot$ in (\ref{F bar explicit}), we know the maps $V_{d_1}\otimes\cdots\otimes V_{d_i-2}\otimes\cdots \otimes V_{d_n}\rightarrow V_2\otimes V_{d_1}\otimes \cdots \otimes V_{d_n}$ are given by multiplication by $s_i=(x_0y_i-x_iy_0)^2$. Taking invariants we get maps $\mathfrak{F}'_{(d_1,\ldots,d_i-2,\ldots,d_n)}\rightarrow\mathfrak{F}'_{(2,d_1,\ldots,d_n)}$. We identify $\mathfrak{F}'_{(d_1,\ldots,d_i-2,\ldots,d_n)}=\mathfrak{F}'_{(0,d_1,\ldots,d_i-2,\ldots,d_n)}$ by adding an extra vertex $w_0$ with $\deg{w_0}=0$. Then multiplication of a graph $J$ by $s_i$ corresponds to adding two extra edges to $J$, both going from $w_0$ to $w_i$.
\end{proof}

\begin{notation}
Let $l=(2m,d_1,\ldots,d_n)$. A cycle is a sequence of vertices $w_{i_1},\ldots,w_{i_r}$ such that each $w_{i_k}$ is adjacent to $w_{i_{k+1}}$, and $w_{i_r}$ is adjacent to $w_{i_1}$. We say that the cycle is central if it involves the vertex $w_0$. We call $r$ the length of the cycle. A subgraph $C$ determined by the cycle $w_{i_1},\ldots,w_{i_r}$ will be denoted by $(w_{i_1},\ldots,w_{i_r})$ if the signs of the edges are given by
\begin{align*}
C=
\begin{bmatrix}
w_{i_1} & w_{i_2} & \cdots& w_{i_{r-1}} & w_{i_r} \\
w_{i_2} & w_{i_3} & \cdots& w_{i_r} & w_{i_1}
\end{bmatrix}.
\end{align*}
\end{notation}

For a cycle we do not require that all the vertices $w_{i_k}$ be different. We observe that rotating the indices $i_1,\ldots,i_r$ does not change the cycle, while reversing an arrow switches the sign. 

\begin{remark} \label{odd cycle or 2-color}
Let $l=(0,d_1,\ldots,d_n)$ and $J$ a graph in $\mathfrak{F}'_l$. It is a well-known fact that $J$ admits a $2$-coloring if and only if it does not contain a cycle of odd length. This fact is sometimes referred to as K\H{o}nig's Theorem.
\end{remark}

\begin{lemma} \label{even cycles}
Suppose $J \in \mathfrak{F}'_{(2,d_1,\ldots,d_n)}$ is a graph having a central cycle of even length. Then $J$ is in the image of the map $t_1$ from (\ref{map for H^1}).
\end{lemma}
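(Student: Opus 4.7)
The argument will proceed by induction on the length $r$ of the central cycle.

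For the base case $r = 2$, the central cycle consists of two edges joining $w_0$ to a single vertex $w_{j_1}$, i.e., it equals $-p_{0,j_1}^2$ as an element of $\mathfrak{F}'$; hence $J$ has a double edge coming from $w_0$ and is in the image of $t_1$ by the preceding lemma.

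For the inductive step with $r \geq 4$, write $J = C \cdot R$, where $C = p_{0,j_1} p_{j_1,j_2} \cdots p_{j_{r-1},0}$ is the central cycle and $R$ is the (possibly empty) product of all other edges of $J$, none of which touches $w_0$. The key manipulation is the Pl\"ucker relation applied to the two non-adjacent cycle edges $p_{0,j_1}$ and $p_{j_{r-2},j_{r-1}}$:
\[
p_{0,j_1}\, p_{j_{r-2},j_{r-1}} \;=\; p_{0,j_{r-2}}\, p_{j_1,j_{r-1}} \;-\; p_{0,j_{r-1}}\, p_{j_1,j_{r-2}}.
\]
After multiplying by the remaining edges of $C$ and by $R$, the second summand carries the factor $p_{0,j_{r-1}} \cdot p_{j_{r-1},0} = -p_{0,j_{r-1}}^2$, exhibiting a double edge at $w_0$, and so by the previous lemma it lies in the image of $t_1$. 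The first summand is another central cycle of length $r$ (with different neighbors of $w_0$), to which one would like to apply the induction hypothesis, but it has the same length.

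To close the argument I would iterate these Pl\"ucker moves and also invoke the analogous Pl\"ucker relations applied to pairs of non-$w_0$ cycle edges; these yield antisymmetry relations of the form $C_\sigma + C_{\sigma \cdot (k,k+1)}$ lies in the image of $t_1$, for adjacent transpositions acting on orderings of $(j_1,\ldots,j_{r-1})$. Combined with the graph-theoretic invariance of $C$ under cyclic rotation and reversal of the non-$w_0$ vertex sequence, these relations give a linear system whose only solution, modulo the image of $t_1$, is $C \equiv 0$; dividing by the resulting nonzero integer coefficient is permissible in characteristic zero. For $r = 4$ the calculation produces the explicit identity
\[
2\, C \;=\; p_{0,j_1}^2 p_{j_2,j_3}^2 \;-\; p_{0,j_2}^2 p_{j_1,j_3}^2 \;+\; p_{0,j_3}^2 p_{j_1,j_2}^2,
\]
exhibiting $C$ as a sum of graphs with double edges at $w_0$; for higher even $r$ one combines this $r = 4$ identity on four-vertex sub-cycles with the inductive hypothesis applied to central cycles of length $r-2$ that appear as auxiliary remainders.

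The subtlest point is organizing the iteration so that every residual length-$r$ summand either acquires a double edge at $w_0$ or contains a strictly shorter central even cycle (allowing the inductive hypothesis to apply). The termination of this process in finitely many steps is the crux of the argument, and it is where the characteristic zero hypothesis enters, through the invertibility of the integer coefficient matrix arising from the combined Pl\"ucker relations.
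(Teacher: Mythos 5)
Your base case and your first Pl\"ucker move are fine, and your $r=4$ identity does check out: with $a=p_{0,j_1}p_{j_2,j_3}$, $b=p_{0,j_2}p_{j_1,j_3}$, $c=p_{0,j_3}p_{j_1,j_2}$ the Pl\"ucker relation gives $b=a+c$, and $a^2-b^2+c^2=-2ac=2C$. But the argument has a genuine gap exactly where you flag it: your inductive step never reduces the length of the central cycle. Each of your moves rewrites a length-$r$ central cycle as (a graph with a double edge at $w_0$) plus (another length-$r$ central cycle), so the induction does not close, and the claim that the resulting ``linear system'' of relations forces $C\equiv 0$ modulo $\im(t_1)$ is asserted rather than proven. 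The sketch for general even $r$ (combining the $r=4$ identity on sub-cycles with length-$(r-2)$ remainders) is not carried out, and as you yourself note, the termination of the iteration is the crux --- which is precisely what is missing.

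The missing idea is that the residual length-$r$ cycles produced by these moves are not arbitrary: if you apply the Pl\"ucker relation to the pair consisting of the cycle edge entering $w_0$ and the cycle edge leaving the successor of $w_0$ (in the paper's notation, to $\left[\begin{smallmatrix} w_r & w_1 \\ w_0 & w_2\end{smallmatrix}\right]$ for the cycle $(w_0,w_1,\ldots,w_r)$ with $r$ odd), then modulo graphs with a double edge at $w_0$ the effect is exactly to shift $w_0$ one position along the cycle and multiply by $-1$. Iterating this shift $r$ times returns $w_0$ to its original slot, so $J\equiv(-1)^r J=-J \pmod{\im(t_1)}$, hence $2J\in\im(t_1)$ and $J\in\im(t_1)$ in characteristic zero. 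No induction on the length and no linear algebra over orderings of the vertices is needed; a single monodromy computation around the cycle does it. To salvage your version you would have to make the sign and ordering bookkeeping of the residual cycles completely explicit, and the only usable relation you will extract is this same $J\equiv-J$.
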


\begin{proof}
We can assume $w_0,\ldots,w_{r}$ is a cycle in $J$, where $r$ is odd. Let $J_0$ be the subgraph given by the cycle, $J_0=(w_0,\ldots,w_r)$, so that $J$ is a multiple of $J_0$, say $J=J_0H$. It suffices to show that $J_0$ can be written as a linear combination of graphs having a double edge from $w_0$. Consider the Pl\"ucker relation
\begin{align*}
\begin{bmatrix}
w_r & w_1 \\
w_0 & w_2
\end{bmatrix}
=
\begin{bmatrix}
w_r & w_0 \\
w_1 & w_2
\end{bmatrix}
+
\begin{bmatrix}
w_r & w_1 \\
w_2 & w_0
\end{bmatrix}
\end{align*}
or

%even cycles (hexagons)
\begin{center}
\begin{tikzpicture}[>=stealth',semithick,inner sep=0.4mm]
\tikzstyle{every label}=[font=\footnotesize]
\node (A) at ( 0,-0.87) [label=below left:$w_0$,star,draw] {};
\node (B) at ( 1,-0.87) [label=below right:$w_1$,circle,draw,fill] {};
\node (C) at ( 1.5,0) [label=right:$w_2$,circle,draw,fill] {};
\node (D) at ( 1,0.87) [circle,draw,fill] {};
\node (E) at ( 0,0.87) [label=above:$w_{r-1}$,circle,draw,fill] {};
\node (F) at ( -0.5,0) [label=left:$w_r$,circle,draw,fill] {};
\draw [->] (A.east) -- (B.west);
\draw [->] (B.north east) -- (C.south west);
\draw [->] (C.north west) -- (D.south east);
\draw [->,dotted] (D.west) -- (E.east);
\draw [->] (E.south west) -- (F.north east);
\draw [->] (F.south east) -- (A.north west);
\node (a) at (2.5,0) [circle] {$=$};

\node (A') at ( 4,-0.87) [label=below left:$w_0$,star,draw] {};
\node (B') at ( 5,-0.87) [label=below right:$w_1$,circle,draw,fill] {};
\node (C') at ( 5.5,0) [label=right:$w_2$,circle,draw,fill] {};
\node (D') at ( 5,0.87) [circle,draw,fill] {};
\node (E') at ( 4,0.87) [label=above:$w_{r-1}$,circle,draw,fill] {};
\node (F') at ( 3.5,0) [label=left:$w_r$,circle,draw,fill] {};
\draw [->] (A'.east) -- (B'.west);
\draw [->] (F'.south east) -- (B'.north west);
\draw [->] (C'.north west) -- (D'.south east);
\draw [->,dotted] (D'.west) -- (E'.east);
\draw [->] (E'.south west) -- (F'.north east);
\draw [->] (A'.north east) -- (C'.south west);

\node (a) at (6.5,0) [circle] {$+$};
\node (A'') at ( 8,-0.87) [label=below left:$w_0$,star,draw] {};
\node (B'') at ( 9,-0.87) [label=below right:$w_1$,circle,draw,fill] {};
\node (C'') at ( 9.5,0) [label=right:$w_2$,circle,draw,fill] {};
\node (D'') at ( 9,0.87) [circle,draw,fill] {};
\node (E'') at ( 8,0.87) [label=above:$w_{r-1}$,circle,draw,fill] {};
\node (F'') at ( 7.5,0) [label=left:$w_r$,circle,draw,fill] {};
\draw [->] (A''.south east) -- (B''.south west);
\draw [->] (B''.north west) -- (A''.north east);
\draw [->] (C''.north west) -- (D''.south east);
\draw [->,dotted] (D''.west) -- (E''.east);
\draw [->] (E''.south east) -- (F''.north west);
\draw [->] (F''.east) -- (C''.west);

\end{tikzpicture}
\end{center}
so that we have $J_0=J_1+J'_1$, where $J'_1$ has a double edge between $w_0$ and $w_1$. Then $J \equiv J_1 H\mod \im(t_1)$. On the other hand, $J_1$ is equivalent to the cycle $-(w_1,w_0,w_2,\ldots,w_r)$. Similarly, given a cycle $J_i=(-1)^i(w_1,\ldots,w_{i},w_0,w_{i+1},\ldots,w_r)$, we use the Pl\"ucker relation on
\begin{align*}
\begin{bmatrix}
w_{i} & w_{i+1} \\
w_{0} & w_{i+2}
\end{bmatrix}
\end{align*}
to obtain $J \equiv (-1)^{i+1}J_{i+1}H \mod \im(t_1)$. We conclude $J \equiv (-1)^r J_rH \equiv -J \mod \im (t_1)$, since $r$ is odd and $J_r=(w_1,\ldots,w_r,w_0)=J_0$. Then $J \in \im (t_1)$, as desired.
\end{proof}

Now we have the tools to show Proposition \ref{graphs for H^1}.

\begin{proof}[Proof of Proposition \ref{graphs for H^1}]
Let $K^\cdot$ be the complex (\ref{explicit Koszul complex on XxP^1}), where $\mathcal{V}=\mathcal{O}_{\mathbb{P}^1}(2)\boxtimes \mathcal{L}=\mathcal{O}(2;d_1,\ldots,d_n)$, and consider the spectral sequence $E_1^{pq}=H^q((\mathbb{P}^1)^{n+1},K^p)$ from (\ref{spectral sequence K}):
\[
\begin{tikzcd}[cramped,sep=scriptsize]
\cdots\arrow[r] &E_1^{-3,0}\arrow[r] &E_1^{-2,0}\arrow[r] &0\arrow[r]          &\cdots &\\
                &\cdots\arrow[r]        &0\arrow[r]          &E_1^{-1,0}\arrow[r] &E_1^{0,0}\arrow[r] &H^0(M,\mathcal{V}|_M).
\end{tikzcd}
\]
Since the complex $K^{\cdot}$ is acyclic (cf. the proof of Corollary \ref{exactness on V2j}), this spectral sequence converges to zero. The restriction of $d_1^{-1,0}$ to invariant sections is $t_1$. We need to show it is surjective. The second page of the spectral sequence has the following shape:
\[
\begin{tikzcd}[cramped,sep=scriptsize]
\cdots &E_2^{-3,0}\arrow{rrd}{d_2} &E_2^{-2,0}\arrow{rrd}{d_2} &0          &\cdots &\\
                &\cdots        &0          &E_2^{-1,0} &E_2^{0,0} &H^0(M,\mathcal{V}|_M)/\im(d_1^{0,0})
\end{tikzcd}
\]

We want to describe the restriction of the map $d_2^{-2,0}$ to invariant sections, that is $(d_2^{-2,0})^G:(E_2^{-2,0})^G\rightarrow (E_2^{0,0})^G$. Observe $(E_2^{0,0})^G=(E_1^{0,0})^G/\im(t_1)$ since $H^0(M,\mathcal{V}|_M)^G=0$ by Proposition \ref{Koszul on XxP^1}. The whole sequence degenerates at $E_3$, so $d_2^{-2,0}$ must be an isomorphism, in particular surjective. Therefore, any $J \in (E_1^{0,0})^G$ can be written as a sum $J'+J''$, where $J'\in \im(t_1)$ and $J''\in \im((d_2^{-2,0})^G)$.

The map $d_2^{-2,0}$ is obtained by doing a resoultion of $K^{-2}\rightarrow K^{-1}\rightarrow K^0$ that computes cohomologies of $K^p$, and then chasing the diagram. Since, for each $q$ and $p$, $H^q(\mathbb{P}^1\times X,\mathcal{O}_{\mathbb{P}^1}(2+2p)\boxtimes (\Omega_X^{-p}\otimes \mathcal{L}))=H^q(\mathbb{P}^1,\mathcal{O}_{\mathbb{P}^1}(2+2p))\otimes H^0(X,\Omega^{-p}\otimes \mathcal{L})$, it suffices to use resolutions of $\mathcal{O}_{\mathbb{P}^1}(2+2p)$ and then tensor with $H^0(X,\Omega_X^{-p}\otimes L)$, for $p=-2,-1,0$. We use the usual \v{C}ech resolution, given by $S_{x_0}\times S_{y_0}\rightarrow S_{x_0y_0}$, where $S=\Bbbk [x_0,y_0]$ and that, when restricted to rational functions of a given degree $l$, it computes the cohomologies of $\mathcal{O}_{\mathbb{P}^1}(l)$ (see e.g. \cite[\S III.5.1]{hartshorne}). We have
\[
\begin{tikzcd}[cramped,column sep=1.18em,row sep=scriptsize]
(S_{x_0y_0})_{-2}\otimes H^0(X,\Omega_X^{2}\otimes \mathcal{L})\arrow{r}{f} &(S_{x_0y_0})_{0}\otimes H^0(X,\Omega_X\otimes \mathcal{L})\arrow{r}{f} &(S_{x_0y_0})_{2}\otimes H^0(X,\mathcal{L}) \\
(S_{x_0}\times S_{y_0})_{-2}\otimes H^0(X,\Omega_X^{2}\otimes \mathcal{L})\arrow{u}{h}\arrow{r}{f} &(S_{x_0}\times S_{y_0})_{0}\otimes H^0(X,\Omega_X\otimes \mathcal{L})\arrow{u}{h}\arrow{r}{f} &(S_{x_0}\times S_{y_0})_{2}\otimes H^0(X,\mathcal{L}).\arrow{u}{h}
\end{tikzcd}
\]

Write $H^0(X,\Omega_X^{2}\otimes \mathcal{L})=\bigoplus_{l>k} V_{d_1}\otimes\cdots\otimes V_{d_k-2}\otimes\cdots\otimes V_{d_l-2}\otimes\cdots\otimes V_{d_n}$ and $H^0(X,\Omega_X\otimes \mathcal{L})=\bigoplus_{i} V_{d_1}\otimes\cdots\otimes V_{d_i-2}\otimes\cdots\otimes V_{n}$, so that the map $K^{-2}\rightarrow K^{-1}$ is $\sum f^{kl}$, where each $f^{kl}$ is multiplication by $s_l$ onto the $k$-th component and multiplication by $-s_k$ onto the $l$-th component. The map $t:K^{-1}\rightarrow K^0$ is $\sum t^i$ where $t^i$ is multiplication by $s_i$. Recall $s_i=p_{0i}^2=(x_0y_i-x_iy_0)^2$.

Let $u\in (S_{x_0y_0})_{-2}\otimes (V_{d_1}\otimes\cdots\otimes V_{d_k-2}\otimes\cdots\otimes V_{d_l-2}\otimes\cdots\otimes V_{d_n})$. We then have $f^{kl}(u)=(\ldots,s_ku,\ldots,-s_lu,\ldots)$, with zeros in the remaining coordinates. Write
\begin{align*}
u=\frac{P}{x_0^m}+\frac{Q}{y_0^m}+\frac{R}{x_0y_0}
\end{align*}
for some polynomials $P$, $Q$, $R$, whose homogeneous degrees with respect to $x_0,y_0$ are $m-2$, $m-2$, $0$, respectively. Then $s_ku=h(v)$ for some $v \in (S_{x_0}\times S_{y_0})_{0}\otimes(V_{d_1}\otimes\cdots\otimes V_{d_l-2}\otimes\cdots\otimes V_{n})$, and we can choose
\begin{align*}
v=\left( Q \frac{s_k}{x_0^m}+R\frac{x_k^2y_0}{x_0} -R x_ky_k , -P\frac{s_k}{y_0^m}-R\frac{x_0y_k^2}{y_0}+Rx_ky_k \right).
\end{align*}
Similarly, we find $v'$ such that $-s_lu=h(v')$. To find $d_2^{-2,0}(u)$ we then need to compute $f(v)+f(v')=s_lv+s_kv'$. We get $s_lv+s_kv'=(b,b)$, where
\begin{align*}
b=R\left( \frac{y_0}{x_0}(s_lx_k^2-s_kx_l^2)+s_kx_ly_l-s_lx_ky_k \right).
\end{align*}
Simplifying we get $b=R(x_0y_l-x_ly_0)(x_0y_k-x_ky_0)(x_ly_k-x_ky_l)=p_{0l}p_{0k}p_{lk}R$. Now $H^0(\mathbb{P}^1\times X,\mathcal{V})$ is identified with the diagonal of $(S_{x_0}\times S_{y_0})_{2}\otimes H^0(X,\mathcal{L})$ so, if we call $\bar{v}\in E_2^{-2,0}$ the class represented by $v$, we have $d_2^{-2,0}(\bar{v})=p_{0l}p_{0k}p_{lk}R$, a multiple of $p_{0l}p_{0k}p_{lk}$. If $\bar{v}$ was invariant, then $d_2^{-2,0}(\bar{v})$ is a linear combination of graphs having $p_{0l}p_{0k}p_{lk}$ as a subgraph, that is, graphs that have a central cycle $w_0,w_l,w_k$ of length $3$.

\begin{center}
\begin{tikzpicture}[>=stealth',semithick,inner sep=0.4mm]
\tikzstyle{every label}=[font=\footnotesize]
\node (A) at ( 0,0) [label=left:$w_0$,star,draw] {};
\node (B) at ( 1,-0.8) [label=below right:$w_l$,circle,draw,fill] {};
\node (C) at ( 1,0.8) [label=above right:$w_k$,circle,draw,fill] {};
\draw [->] (A.south east) -- (B.north west);
\draw [->] (A.north east) -- (C.south west);
\draw [->] (B.north) -- (C.south);
\end{tikzpicture}
\end{center}

Therefore, modulo $\im(t_1)$, every $J \in \mathfrak{F}'_{(2,d_1,\ldots,d_n)}$ is a linear combination of graphs of the form $p_{0l}p_{0k}p_{lk}R$. Then it suffices to show that all such graphs are in the image of $t_1$. Given $J=p_{0l}p_{0k}p_{lk}R$, call $J'\in \mathfrak{F}'_{(0,d_1,\ldots,d_n)}$ the graph obtained by replacing the two edges $w_0\rightarrow w_k$, $w_0\rightarrow w_l$ by an extra $w_l\rightarrow w_k$, that is, $J'=p_{lk}^2R$. Observe that a $2$-coloring of $J'$ would need to have $c(w_k)\neq c(w_l)$, so it would give a $2$-coloring on $J$. Since $\mathcal{L}$ has no strictly semi-stable locus, $J$ and $J'$ do not admit a $2$-coloring and by Remark \ref{odd cycle or 2-color} $J'$ must contain some odd cycle, say $(w_{i_1},\ldots,w_{i_r})$. Note that any two vertices that are adjacent on $J'$ are adjacent on $J$ too, so in fact $(w_{i_1},\ldots,w_{i_r})$ is an odd cycle in $J$, that is not central. Apply the Pl\"ucker relation
\begin{align*}
\begin{bmatrix}
w_l & w_{i_1} \\
w_k & w_{i_2}
\end{bmatrix}
=
\begin{bmatrix}
w_l & w_k \\
w_{i_1} & w_{i_2}
\end{bmatrix}
+
\begin{bmatrix}
w_l & w_{i_1} \\
w_{i_2} & w_k
\end{bmatrix}
\end{align*}
or

\begin{center}
\begin{tikzpicture}[>=stealth',semithick,inner sep=0.4mm]
\tikzstyle{every label}=[font=\footnotesize]
\node (A) at ( 0,0) [label=left:$w_0$,star,draw] {};
\node (C) at ( 1,0.8) [label=above:$w_k$,circle,draw,fill] {};
\node (B) at ( 1,-0.8) [label=below:$w_l$,circle,draw,fill] {};
\draw [->] (A.north east) -- (C.south west);
\draw [->] (A.south east) -- (B.north west);
\draw [->] (B.north) -- (C.south);

\node (D) at ( 2,-0.8) [label=below:$w_{i_1}$,circle,draw,fill] {};
\node (E) at ( 2,0.8) [label=above:$w_{i_2}$,circle,draw,fill] {};
\node (F) at ( 3,0) [circle,draw,fill] {};
\draw [->] (D.north) -- (E.south);
\draw [->] (E.south east) -- (F.north west);
\draw [->,dotted] (F.south west) -- (D.north east);

\node (a) at (3.5,0)[circle]{=};

\node (A') at ( 4,0) [star,draw] {};
\node (C') at ( 5,0.8) [label=above:$w_k$,circle,draw,fill] {};
\node (B') at ( 5,-0.8) [label=below:$w_l$,circle,draw,fill] {};
\node (D') at ( 6,-0.8) [label=below:$w_{i_1}$,circle,draw,fill] {};
\node (E') at ( 6,0.8) [label=above:$w_{i_2}$,circle,draw,fill] {};
\node (F') at ( 7,0) [circle,draw,fill] {};

\draw [->] (A'.north east) -- (C'.south west);
\draw [->] (A'.south east) -- (B'.north west);
\draw [->] (B'.east) -- (D'.west);
\draw [->] (C'.east) -- (E'.west);
\draw [->] (E'.south east) -- (F'.north west);
\draw [->,dotted] (F'.south west) -- (D'.north east);

\node (b) at (7.5,0)[circle]{+};

\node (A'') at ( 8,0) [star,draw] {};
\node (C'') at ( 9,0.8) [label=above:$w_k$,circle,draw,fill] {};
\node (B'') at ( 9,-0.8) [label=below:$w_l$,circle,draw,fill] {};
\node (D'') at ( 10,-0.8) [label=below:$w_{i_1}$,circle,draw,fill] {};
\node (E'') at ( 10,0.8) [label=above:$w_{i_2}$,circle,draw,fill] {};
\node (F'') at ( 11,0) [circle,draw,fill] {};

\draw [->] (A''.north east) -- (C''.south west);
\draw [->] (A''.south east) -- (B''.north west);
\draw [->] (B''.north east) -- (E''.south west);
\draw [->] (D''.north west) -- (C''.south east);
\draw [->] (E''.south east) -- (F''.north west);
\draw [->,dotted] (F''.south west) -- (D''.north east);

\end{tikzpicture}
\end{center}

to get $J=H+K$, where $H$ contains the cycle $w_0,w_k,w_{i_2},\ldots,w_{i_r},w_{i_1},w_l$ and $K$ contains the cycle $w_0,w_l,w_{i_2},\ldots,w_{i_r},w_{i_1},w_k$, both of even length $r+3$. By Lemma \ref{even cycles}, $J\in \im(t_1)$. We conclude $t_1$ is surjective.
\end{proof}

Next, we investigate the map $t_2$ from (\ref{map for H^2}). According to the splitting $S^2\mathfrak{g}^\vee=V_4\oplus V_0$ we write $t_2=(t,t')$, and further $t=\sum t^i$, $t'=\sum t'^i$, where $t^i:\mathfrak{F}'_{(2,d_1,\ldots,d_i-2,\ldots,d_n)}\rightarrow\mathfrak{F}'_{(4,d_1,\ldots,d_n)}$ and $t'^i:\mathfrak{F}'_{(2,d_1,\ldots,d_i-2,\ldots,d_n)}\rightarrow\mathfrak{F}'_{(0,d_1,\ldots,d_n)}$. Let us describe these maps in terms of graphs with vertices $\{w_0,w_1,\ldots,w_n\}$.

\begin{lemma}
Let $J$ be a graph in $\mathfrak{F}'_{(2,d_1,\ldots,d_n)}$. Write $J$ as a polynomial, $J=p_{0k}p_{0l}H$. Then $t^i(J)=p_{0i}^2J$, while $t'^i(J)=\frac{2}{3}p_{ik}p_{il}H$. That is, $t^i$ adds a double edge $w_0 \rightarrow w_i$ to the graph while, up to a constant, $t'^i$ replaces the edges $w_0\rightarrow w_k$, $w_0\rightarrow w_l$ by $w_i\rightarrow w_k$ and $w_i\rightarrow w_l$:

%splitting of the map t_2
\begin{center}
\begin{tikzpicture}[>=stealth',semithick,inner sep=0.4mm]
\tikzstyle{every label}=[font=\footnotesize]
\node (A) at ( 0,0) [label=above:$w_0$,star,draw] {};
\node (B) at ( -1,0.6) [circle,draw,fill] {};
\node (C) at ( -1,-0.6) [circle,draw,fill] {};
\node (D) at ( 1,0) [label={[shift={(0.0,0.028)}]$w_i$},circle,draw,fill] {};
\node (E) at ( 2,0.6) [circle] {};
\node (F) at ( 2,0) [circle] {};
\node (G) at ( 2,-0.6) [circle] {};
\draw [->] (A.north west) -- (B.south east);
\draw [->] (A.south west) -- (C.north east);
\draw [->] (D.north east) -- (E.south west);
\draw [->] (D.east) -- (F.west);
\draw [->] (D.south east) -- (G.north west);

\node (a) at (2.5,0) [circle] {$\mapsto$};

\node (A') at ( 4,0) [label=above:$w_0$,star,draw] {};
\node (B') at ( 3,0.6) [circle,draw,fill] {};
\node (C') at ( 3,-0.6) [circle,draw,fill] {};
\node (D') at ( 5,0) [label={[shift={(0.0,0.028)}]$w_i$},circle,draw,fill] {};
\node (E') at ( 6,0.6) [circle] {};
\node (F') at ( 6,0) [circle] {};
\node (G') at ( 6,-0.6) [circle] {};
\draw [->] (A'.north west) -- (B'.south east);
\draw [->] (A'.south west) -- (C'.north east);
\draw [->] (D'.north east) -- (E'.south west);
\draw [->] (D'.east) -- (F'.west);
\draw [->] (D'.south east) -- (G'.north west);
\draw [->] (A'.north east) -- (D'.north west);
\draw [->] (A'.south east) -- (D'.south west);

\node (b) at (6.5,0) [circle] {$\oplus$};

\node (A'') at ( 8,0) [label=left:$w_0$,star,draw] {};
\node (B'') at ( 7,0.6) [circle,draw,fill] {};
\node (C'') at ( 7,-0.6) [circle,draw,fill] {};
\node (D'') at ( 9,0) [label={[shift={(0.0,0.028)}]$w_i$},circle,draw,fill] {};
\node (E'') at ( 10,0.6) [circle] {};
\node (F'') at ( 10,0) [circle] {};
\node (G'') at ( 10,-0.6) [circle] {};
\draw [->] (D''.north west) -- (B''.south east);
\draw [->] (D''.south west) -- (C''.north east);
\draw [->] (D''.north east) -- (E''.south west);
\draw [->] (D''.east) -- (F''.west);
\draw [->] (D''.south east) -- (G''.north west);

\end{tikzpicture}
\end{center}

\end{lemma}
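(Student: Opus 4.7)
The plan is to compute the two projections from the canonical splitting $S^2\mathfrak{g}^\vee = V_4 \oplus V_0$ coming from (\ref{tautological on P2}) explicitly in coordinates, and then apply each to multiplication by $s_i$.

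Under the identification $\mathfrak{g}^\vee = V_2$ with basis $\{X_0, Y_0, Z_0\} = \{x_0^2, x_0 y_0, y_0^2\}$ (Remark \ref{V2=g=gv}), the summand $V_4 \subset S^2 \mathfrak{g}^\vee$ is realized as ordinary polynomial multiplication: $p \cdot q \in S^2 V_2$ is sent to the degree-$4$ polynomial $pq$ in $(x_0, y_0)$. Since $s_i = (x_0 y_i - x_i y_0)^2 = p_{0i}^2$ is already written in these $(x_0, y_0)$ coordinates, multiplying $J = p_{0k}p_{0l}H$ by $s_i$ and projecting to $V_4$ gives $t^i(J) = p_{0i}^2 J$ at once. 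Graph-theoretically this adds the double edge $w_0 \to w_i$.

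For $t'^i$, the complementary summand $V_0 = \langle X_0 Z_0 - Y_0^2\rangle$ is the unique trivial subrepresentation of $S^2 \mathfrak{g}^\vee$, so the projection $S^2\mathfrak{g}^\vee \to V_0$ is determined up to scalar by the unique $\mathfrak{sl}_2$-invariant symmetric bilinear form $B$ on $V_2$. I would fix the normalization via the second transvectant on degree-$2$ polynomials: $B(x_0^2, y_0^2) = 4$, $B(x_0 y_0, x_0 y_0) = -2$, and $B$ vanishes on the remaining basis pairs by weight considerations. Then $B(X_0 Z_0 - Y_0^2) = 6$, so the $V_0$-component of any $\alpha \in S^2\mathfrak{g}^\vee$ equals $\tfrac{1}{6}B(\alpha)\cdot (X_0 Z_0 - Y_0^2)$. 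Expanding
\begin{align*}
J\cdot s_i = \bigl[X_0 y_k y_l - Y_0(x_k y_l + x_l y_k) + Z_0 x_k x_l\bigr] \bigl[X_0' y_i^2 - 2 Y_0' x_i y_i + Z_0' x_i^2\bigr] H,
\end{align*}
where primes denote the second copy of $\mathfrak{g}^\vee$ coming from $s_i$, and applying $B$ (which by weights picks out only the $X_0 Z_0'$, $Z_0 X_0'$, and $Y_0 Y_0'$ cross-terms) gives
\begin{align*}
B(J \cdot s_i) = 4\bigl[y_k y_l x_i^2 - (x_k y_l + x_l y_k)x_i y_i + x_k x_l y_i^2\bigr] H = 4\, p_{ik} p_{il} H,
\end{align*}
once one recognizes the bracket as $(x_i y_k - x_k y_i)(x_i y_l - x_l y_i)$. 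Dividing by $6$ yields $t'^i(J) = \tfrac{2}{3} p_{ik} p_{il} H$.

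The one delicate point is pinning down the normalization of $B$ so that the scalar comes out to exactly $\tfrac{2}{3}$ and not some other constant; I would sanity-check the normalization on a small explicit example (say $n=3$ with a single graph) before committing to the general formula, since the rest of the argument is a direct coordinate computation.
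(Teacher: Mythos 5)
Your computation is correct and takes essentially the same route as the paper: the paper's projection $\pi(P)=\tfrac{1}{3}(2\beta-\alpha)$ for $P=\alpha Y_0^2+\beta X_0Z_0+\cdots$ is exactly your $\tfrac{1}{6}B(\cdot)$, and the expansion of $J\cdot s_i$ is identical. Your worry about the normalization is moot, since the projection is $\alpha\mapsto B(\alpha)/B(X_0Z_0-Y_0^2)\cdot(X_0Z_0-Y_0^2)$, which is invariant under rescaling $B$; all that matters is that $B$ is a nonzero multiple of the invariant form, which your weight argument plus one equivariance check already guarantees.
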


\begin{proof}
By the explicit description of $\bar{F}^\cdot$ from (\ref{F bar explicit}), we know $t^i$ is multiplication by $(x_0y_i-x_iy_0)^2=p_{0i}^2$, which corresponds to adding two edges, both from $w_0$ to $w_i$.

Consider the splitting $S^2\mathfrak{g}^\vee=V_4\oplus V_0$ obtained from (\ref{tautological on P2}). Here $V_0$ is the one-dimensional vector space with the trivial action. The projection $\pi:S^2\mathfrak{g}\rightarrow V_0$ is the unique $\mathfrak{g}$-equivariant map that satisfies $\pi \circ \imath=\Id_{V_0}$, where $\imath:V_0\hookrightarrow S^2\mathfrak{g}^\vee$ is the inclusion from (\ref{tautological on P2}), namely, $\imath$ is multiplication by $X_0Z_0-Y_0^2$. We find $\pi$ explicitly, and it is defined as follows: for $P=\alpha Y_0^2 + \beta X_0Z_0 + \ldots \in S^2\mathfrak{g}^\vee$, $\pi(P)=\frac{1}{3}(2\beta-\alpha)$. It is easy to check that $\pi$ is indeed a $\mathfrak{g}$-equivariant map: observe, for instance, that $E\cdot(X_0Y_0)=F\cdot(Y_0Z_0)=-X_0Z_0-2Y_0^2$, and $\pi(-X_0Z_0-2Y_0^2)=0$. Indeed, this together with the fact that all monomials other than $X_0Z_0$ and $Y_0^2$ map to zero ensures that $\pi(g\cdot P)=0$ $\forall g\in \mathfrak{g}$, $P\in S^2\mathfrak{g}^\vee$, so $\pi$ is a map of representations. Further, we see that $\pi(X_0Z_0-Y_0^2)=1$, and then $\pi \circ \imath=\Id_{V_0}$. By uniqueness, $\pi$ must be the desired map.

Now look at $t'=\sum t'^i$. Each $t'^i$ is given by multiplication by $x_i^2Z_0-2x_iy_iY_0+y_i^2X_0$ followed by the projection $\pi$ from $S^2\mathfrak{g}^\vee$. Suppose $J \in (V_2\otimes V_{d_1}\otimes\cdots\otimes V_{d_i-2}\otimes\cdots \otimes V_{d_n})^{PGL_2}=\mathfrak{F}'_{(2,d_1,\ldots,d_{i}-2,\ldots,d_n)}$ is a directed graph, written as $J=(Ax_0^2+Bx_0y_0+Cy_0^2)H=(AX_0+BY_0+CZ_0)H$ for some polynomials $A,B,C$ and $H$. Multiplying by $x_i^2Z_0-2x_iy_iY_0+y_i^2X_0$ and looking at the terms involving $X_0Z_0$ and $Y_0^2$, we find $t'^i(J)=\frac{2}{3}(Ax_i^2+Bx_iy_i+Cy_i^2)H$. Now, since $J$ is actually a $PGL_2$-invariant section, it has to be of the form $J=(x_0y_k-x_ky_0)(x_0y_l-x_ly_0)H$. That is, $J$ is a graph where the two edges coming from $w_0$ are $w_0\rightarrow w_k$ and $w_0\rightarrow w_l$ (up to sign). Then we have $A=y_ky_l$, $B=-(y_kx_l+y_lx_k)$, $C=x_kx_l$ and we compute
\begin{align*}
t'^i(J)=\frac{2}{3}(x_iy_l-x_ly_i)(x_iy_k-x_ky_i)H.
\end{align*}
That is, up to multiplication by $2/3$, the map $t'^i$ precisely erases the edges $w_0\rightarrow w_k$, $w_0\rightarrow w_l$, and replaces them by $w_i\rightarrow w_k$, $w_i\rightarrow w_l$.
\end{proof}

Since multiplying everything in $\mathfrak{F}'_{(0,d_1,\ldots,d_n)}$ by a constant does not change the image of the map $t_2$, from now on we just ignore the constant $2/3$ appearing in $t'$. Now we can prove Proposition \ref{graphs for H^2}.

\begin{proof}[Proof of Proposition \ref{graphs for H^2}]
Write $t_2=(t,t')$, according to the decomposition in (\ref{map for H^2}). By Corollary \ref{exactness on V2j}, $t$ is surjective. Then it suffices to show that, for any graph $H\in \mathfrak{F}'_{(0,d_1,\ldots,d_n)}=\mathfrak{F}'_{(d_1,\ldots,d_n)}$, we have $(0,H) \in \im(t_2)$.

\begin{step}
Let $J$ be a graph in $\mathfrak{F}'_{(4,d_1,\ldots,d_n)}$ having a subgraph $B$ of the form
\begin{align*}
B_{1,2,3}=
\begin{bmatrix}
w_0 & w_1 & w_2 & w_0 & w_0 \\
w_1 & w_2 & w_0 & w_3 & w_3
\end{bmatrix}.
\end{align*}
That is, $B_{1,2,3}$ has a cycle $(w_0,w_1,w_2)$ and a double edge between $w_0$ and $w_3$ (and similarly, $B_{i_1,i_2,i_3}$ denotes a permutation of indices in the expression above).

%the flask
\begin{center}
\begin{tikzpicture}[>=stealth',semithick,inner sep=0.4mm]
\tikzstyle{every label}=[font=\footnotesize]
\node (a) at (-3,0) [circle]{$B_{1,2,3}$};
\node (b) at (-2,0) [circle]{$=$};
\node (A) at ( 0,0) [label={[shift={(0.0,-0.48)}]$w_0$},star,draw] {};
\node (B) at ( 1,-0.8) [label=right:$w_1$,circle,draw,fill] {};
\node (C) at ( 1,0.8) [label=right:$w_2$,circle,draw,fill] {};
\node (D) at ( -1,0) [label={[shift={(0.0,-0.44)}]$w_3$},circle,draw,fill] {};
\draw [->] (A.south east) -- (B.north west);
\draw [->] (B.north) -- (C.south);
\draw [->] (C.south west) -- (A.north east);
\draw [->] (A.north west) -- (D.north east);
\draw [->] (A.south west) -- (D.south east);
\end{tikzpicture}
\end{center}
Then we show $(J,0)\in \im(t_2)$, or in other words, $J \in t_2(\ker t')$. For this, write $J=B_{1,2,3}H$ and let $P=(w_0,w_1,w_2) \in \mathfrak{F}'_{(2,2,2,0,\ldots,0)}$ and $P'=(w_0,w_3,w_1) \in \mathfrak{F}'_{(2,2,0,2,\ldots,0)}$. We see that $t_2(PH-P'H)=(B_{1,2,3}H-B_{3,1,2}H,0)$, that is, $B_{1,2,3}H \equiv B_{3,1,2}H \mod t_2(\ker t')$. 

%relation of flasks
\begin{center}
\begin{tikzpicture}[>=stealth',semithick,inner sep=0.4mm]
\tikzstyle{every label}=[font=\footnotesize]

\node (A) at ( -0.5,0) [label=below left:$w_0$,star,draw] {};
\node (B) at ( 0,-0.87) [label=right:$w_1$,circle,draw,fill] {};
\node (C) at ( 0,0.87) [label=right:$w_2$,circle,draw,fill] {};
\node (D) at ( -1.5,0) [label=below:$w_3$,circle,draw,fill] {};
\draw [->] (A.south east) -- (B.north west);
\draw [->] (B.north) -- (C.south);
\draw [->] (C.south west) -- (A.north east);

\node (a) at (0.5,0) [circle]{$-$};

\node (A') at ( 2.5,0) [label=right:$w_0$,star,draw] {};
\node (B') at ( 3,-0.87) [label=right:$w_1$,circle,draw,fill] {};
\node (C') at ( 3,0.87) [label=right:$w_2$,circle,draw,fill] {};
\node (D') at ( 1.5,0) [label=below left:$w_3$,circle,draw,fill] {};
\draw [->] (A'.west) -- (D'.east);
\draw [->] (D'.south east) -- (B'.north west);
\draw [->] (B'.north west) -- (A'.south east);

\node (b) at (3.5,0) [circle]{$\mapsto$};

\node (A'') at ( 5.5,0) [star,draw] {};
\node (B'') at ( 6,-0.87) [label=right:$w_1$,circle,draw,fill] {};
\node (C'') at ( 6,0.87) [label=right:$w_2$,circle,draw,fill] {};
\node (D'') at ( 4.5,0) [label=below left:$w_3$,circle,draw,fill] {};
\draw [->] (A''.south east) -- (B''.north west);
\draw [->] (B''.north) -- (C''.south);
\draw [->] (C''.south west) -- (A''.north east);
\draw [->] (A''.north west) -- (D''.north east);
\draw [->] (A''.south west) -- (D''.south east);

\node (b) at (6.5,0) [circle]{$-$};

\node (A''') at ( 8.5,0) [star,draw] {};
\node (B''') at ( 9,-0.87) [label=right:$w_1$,circle,draw,fill] {};
\node (C''') at ( 9,0.87) [label=right:$w_2$,circle,draw,fill] {};
\node (D''') at ( 7.5,0) [label=below left:$w_3$,circle,draw,fill] {};
\draw [->] (A'''.west) -- (D'''.east);
\draw [->] (D'''.south east) -- (B'''.north west);
\draw [->] (B'''.north west) -- (A'''.south east);
\draw [->] (A'''.north) -- (C'''.west);
\draw [->] (A'''.east) -- (C'''.south);

\node (c) at (10,0) [circle]{$\oplus$};
\node (d) at (10.5,0)[circle]{$0$};

\end{tikzpicture}
\end{center}

On the other hand, take $B_{1,2,3}$ and apply the Pl\"ucker relation to the edges
\begin{align*}
\begin{bmatrix}
w_0 & w_1 \\
w_3 & w_2 
\end{bmatrix}
\end{align*}
to obtain $B_{1,2,3}=B_{3,2,1}+B_{1,3,2}$. 

%Plucker to flask
\begin{center}
\begin{tikzpicture}[>=stealth',semithick,inner sep=0.4mm]
\tikzstyle{every label}=[font=\footnotesize]

\node (A) at ( -0.5,0) [star,draw] {};
\node (B) at ( 0,-0.87) [label=right:$w_1$,circle,draw,fill] {};
\node (C) at ( 0,0.87) [label=right:$w_2$,circle,draw,fill] {};
\node (D) at ( -1.5,0) [label=below left:$w_3$,circle,draw,fill] {};
\draw [->] (A.south east) -- (B.north west);
\draw [->] (B.north) -- (C.south);
\draw [->] (C.south west) -- (A.north east);
\draw [->] (A.north west) -- (D.north east);
\draw [->] (A.south west) -- (D.south east);

\node (a) at (0.5,0) [circle]{$=$};

\node (A') at ( 2.5,0) [star,draw] {};
\node (B') at ( 3,-0.87) [label=right:$w_1$,circle,draw,fill] {};
\node (C') at ( 3,0.87) [label=right:$w_2$,circle,draw,fill] {};
\node (D') at ( 1.5,0) [label=below left:$w_3$,circle,draw,fill] {};
\draw [->] (A'.south) -- (B'.west);
\draw [->] (A'.east) -- (B'.north);
\draw [->] (A'.west) -- (D'.east);
\draw [->] (D'.north east) -- (C'.south west);
\draw [->] (C'.south west) -- (A'.north east);

\node (b) at (3.5,0) [circle]{$+$};

\node (A'') at ( 5.5,0) [star,draw] {};
\node (B'') at ( 6,-0.87) [label=right:$w_1$,circle,draw,fill] {};
\node (C'') at ( 6,0.87) [label=right:$w_2$,circle,draw,fill] {};
\node (D'') at ( 4.5,0) [label=below left:$w_3$,circle,draw,fill] {};
\draw [->] (A''.east) -- (C''.south);
\draw [->] (C''.west) -- (A''.north);
\draw [->] (A''.south east) -- (B''.north west);
\draw [->] (B''.north west) -- (D''.south east);
\draw [->] (A''.west) -- (D''.east);

\end{tikzpicture}
\end{center}

Also, we know $B_{1,2,3}=-B_{2,1,3}$ by reversing the arrows. Combining all these, we get that $B_{1,2,3}H\equiv B_{3,2,1}H+B_{1,3,2}H \equiv 2B_{3,2,1}H \equiv -2B_{2,3,1}H \equiv -2B_{1,2,3}H \mod t_2(\ker t')$. Thus we obtain $3B_{1,2,3}H \in t_2(\ker t')$, so $(J,0)\in \im(t_2)$.
\end{step}

\begin{step}
Let $J$ be a graph in $\mathfrak{F}'_{(4,d_1,\ldots,d_n)}$ having a subgraph $C$ of the form
\begin{align*}
C_{1,\ldots,r}=
\begin{bmatrix}
w_0 & w_1 & w_2 & w_0 & w_3 & \cdots & w_r \\
w_1 & w_2 & w_0 & w_3 & w_4 & \cdots & w_0
\end{bmatrix}
\end{align*}
for $r$ odd. That is, $C_{1,\ldots,r}$ has cycles $(w_0,w_1,w_2)$ and $(w_0,w_3,\ldots,w_r)$.

%the fish
\begin{center}
\begin{tikzpicture}[>=stealth',semithick,inner sep=0.4mm]
\tikzstyle{every label}=[font=\footnotesize]
\node (a) at (-3.5,0) [circle]{$C_{1,\ldots,r}$};
\node (b) at (-2.5,0) [circle]{$=$};
\node (A) at ( 0,0) [star,draw] {};
\node (B) at ( 0.87,-0.6) [label=below right:$w_1$,circle,draw,fill] {};
\node (C) at ( 0.87,0.6) [label=above right:$w_2$,circle,draw,fill] {};
\node (D) at ( -0.5,0.87) [label=above:$w_3$,circle,draw,fill] {};
\node (E) at ( -1.5,0.87) [label=above:$w_4$,circle,draw,fill] {};
\node (F) at ( -2,0) [circle,draw,fill] {};
\node (G) at ( -1.5,-0.87) [circle,draw,fill] {};
\node (H) at ( -0.5,-0.87) [label=below:$w_r$,circle,draw,fill] {};
\draw [->] (A.south east) -- (B.north west);
\draw [->] (B.north) -- (C.south);
\draw [->] (C.south west) -- (A.north east);
\draw [->] (A.north west) -- (D.south east);
\draw [->] (D.west) -- (E.east);
\draw [->] (E.south west) -- (F.north east);
\draw [->] (F.south east) -- (G.north west);
\draw [->,dotted] (G.east) -- (H.west);
\draw [->] (H.north east) -- (A.south west);
\end{tikzpicture}
\end{center}
Then we see $(J,0) \in \im(t_2)$. Indeed, by Lemma \ref{even cycles}, the even cycle $(w_0,w_3,\ldots,w_r)$ can be written as a sum of graphs having double edges coming from $w_0$. Using this, $C_{1,\ldots,r}$ is written as a sum of graphs containing subgraphs of the form $B$ from Step 1. By Step 1, we get $J\in t_2(\ker t')$.
\end{step}

\begin{step}
Let $J$ be a graph in $\mathfrak{F}'_{(4,d_1,\ldots,d_n)}$ having a subgraph $B$ of the form
\begin{align} \label{balloon}
B_{1,\ldots,r}=
\begin{bmatrix}
w_0 & w_1 & \cdots & w_{r-1} & w_0 & w_0 \\
w_1 & w_2 & \cdots & w_0 & w_r & w_r
\end{bmatrix}
\end{align}
with $r$ odd. That is, $B_{1,\ldots,r}$ has an odd cycle $(w_0,\ldots,w_{r-1})$ and a double edge between $w_0$ and $w_r$.

%the balloon with pentagons
\begin{center}
\begin{tikzpicture}[>=stealth',semithick,inner sep=0.4mm]
\tikzstyle{every label}=[font=\footnotesize]
\node (a) at ( -3,0) [circle] {$=$};
\node (b) at ( -4,0) [circle] {$B_{1,\ldots,r}$};
\node (F) at ( -2,0) [label=left:$w_r$,circle,draw,fill] {};
\node (A) at ( -0.31,-0.95) [label=below left:$w_1$,circle,draw,fill] {};
\node (B) at ( 0.81,-0.59) [label=below right:$w_2$,circle,draw,fill] {};
\node (D) at ( 0.81,0.59) [label=above right:$w_3$,circle,draw,fill] {};
\node (E) at ( -0.31,0.95) [circle,draw,fill] {};
\node (H) at (-1,0) [label=above left:$w_0$,star,draw]{};
\draw [->] (H.south east) -- (A.north west);
\draw [->] (A.east) -- (B.west);
\draw [->] (B.north) -- (D.south);
\draw [->] (D.west) -- (E.east);
\draw [->,dotted] (E.south west) -- (H.north east);
\draw [->] (H.south west) -- (F.south east);
\draw [->] (H.north west) -- (F.north east);
\end{tikzpicture}
\end{center}
We show $(J,0)\in \im(t_2)$. If $r=3$, this is Step 1. Now suppose this is true for $r-2$. Write $J=C_{1,\ldots,r}H$ and do the Pl\"ucker relation to the edges
\begin{align*}
\begin{bmatrix}
w_0 & w_1\\
w_r & w_2
\end{bmatrix}
\end{align*}
to obtain $B_{1,\ldots,r}=B_{r,2,\ldots,r-1,1}+C$, where $C$ is a graph of the form given in Step 2.

%plucker to balloon, with pentagons
\begin{center}
\begin{tikzpicture}[>=stealth',semithick,inner sep=0.4mm]
\tikzstyle{every label}=[font=\footnotesize]

\node (F) at ( -2,0) [label=left:$w_r$,circle,draw,fill] {};
\node (A) at ( -0.31,-0.95) [label=below left:$w_1$,circle,draw,fill] {};
\node (B) at ( 0.81,-0.59) [label=below right:$w_2$,circle,draw,fill] {};
\node (D) at ( 0.81,0.59) [label=above right:$w_3$,circle,draw,fill] {};
\node (E) at ( -0.31,0.95) [circle,draw,fill] {};
\node (H) at (-1,0) [label=above left:$w_0$,star,draw]{};
\draw [->] (H.south east) -- (A.north west);
\draw [->] (A.east) -- (B.west);
\draw [->] (B.north) -- (D.south);
\draw [->] (D.west) -- (E.east);
\draw [->,dotted] (E.south west) -- (H.north east);
\draw [->] (H.south west) -- (F.south east);
\draw [->] (H.north west) -- (F.north east);

\node (a) at (1.5,0) [circle]{$=$};

\node (F') at ( 2.5,0) [label=left:$w_r$,circle,draw,fill] {};
\node (A') at ( -0.31+4.5,-0.95) [label=below left:$w_1$,circle,draw,fill] {};
\node (B') at ( 0.81+4.5,-0.59) [label=below right:$w_2$,circle,draw,fill] {};
\node (D') at ( 0.81+4.5,0.59) [label=above right:$w_3$,circle,draw,fill] {};
\node (E') at ( -0.31+4.5,0.95) [circle,draw,fill] {};
\node (H') at (-1+4.5,0) [label=above left:$w_0$,star,draw]{};
\draw [->] (H'.east) -- (A'.north);
\draw [->] (H'.south) -- (A'.west);
\draw [->] (B'.north) -- (D'.south);
\draw [->] (D'.west) -- (E'.east);
\draw [->,dotted] (E'.south west) -- (H'.north east);
\draw [->] (H'.west) -- (F'.east);
\draw [->] (F'.east) -- (B'.west);

\node (b) at (6,0) [circle]{$+$};

\node (F'') at ( 7,0) [label=left:$w_r$,circle,draw,fill] {};
\node (A'') at ( -0.31+9,-0.95) [label=below left:$w_1$,circle,draw,fill] {};
\node (B'') at ( 0.81+9,-0.59) [label=below right:$w_2$,circle,draw,fill] {};
\node (D'') at ( 0.81+9,0.59) [label=above right:$w_3$,circle,draw,fill] {};
\node (E'') at ( -0.31+9,0.95) [circle,draw,fill] {};
\node (H'') at (-1+9,0) [label=above left:$w_0$,star,draw]{};
\draw [->] (H''.east) -- (B''.west);
\draw [->] (B''.north) -- (D''.south);
\draw [->] (D''.west) -- (E''.east);
\draw [->,dotted] (E''.south west) -- (H''.north east);
\draw [->] (H''.west) -- (F''.east);
\draw [->] (A''.north west) -- (F''.south east);
\draw [->] (H''.south east) -- (A''.north west);

\end{tikzpicture}
\end{center}
Therefore, $B_{1,\ldots,r}H \equiv B_{r,2,\ldots,r-1,1}H \mod t_2(\ker t')$. On the other hand, if we use Pl\"ucker on 
\begin{align*}
\begin{bmatrix}
w_1 & w_3\\
w_2 & w_4
\end{bmatrix}
\end{align*}
we get $B_{1,\ldots,r}=-B_{1,3,2,4,\ldots,r}+B'$, where $B'$ is a graph containing $B_{1,4,5,\ldots,r}$ as a subgraph.

%other plucker on balloon, with pentagons
\begin{center}
\begin{tikzpicture}[>=stealth',semithick,inner sep=0.4mm]
\tikzstyle{every label}=[font=\footnotesize]

\node (F) at ( -2,0) [label=left:$w_r$,circle,draw,fill] {};
\node (A) at ( -0.31,-0.95) [label=below left:$w_1$,circle,draw,fill] {};
\node (B) at ( 0.81,-0.59) [label=below right:$w_2$,circle,draw,fill] {};
\node (D) at ( 0.81,0.59) [label=above right:$w_3$,circle,draw,fill] {};
\node (E) at ( -0.31,0.95) [label=above left:$w_4$,circle,draw,fill] {};
\node (H) at (-1,0) [label=above left:$w_0$,star,draw]{};
\draw [->] (H.south east) -- (A.north west);
\draw [->] (A.east) -- (B.west);
\draw [->] (B.north) -- (D.south);
\draw [->] (D.west) -- (E.east);
\draw [->,dotted] (E.south west) -- (H.north east);
\draw [->] (H.south west) -- (F.south east);
\draw [->] (H.north west) -- (F.north east);

\node (a) at (1.5,0) [circle]{$=$};

\node (F') at ( 2.5,0) [label=left:$w_r$,circle,draw,fill] {};
\node (A') at ( -0.31+4.5,-0.95) [label=below left:$w_1$,circle,draw,fill] {};
\node (B') at ( 0.81+4.5,-0.59) [label=below right:$w_2$,circle,draw,fill] {};
\node (D') at ( 0.81+4.5,0.59) [label=above right:$w_3$,circle,draw,fill] {};
\node (E') at ( -0.31+4.5,0.95) [label=above left:$w_4$,circle,draw,fill] {};
\node (H') at (-1+4.5,0) [label=above left:$w_0$,star,draw]{};
\draw [->] (H'.south east) -- (A'.north west);
\draw [->] (A'.north east) -- (D'.south west);
\draw [->] (B'.north) -- (D'.south);
\draw [->] (B'.north west) -- (E'.south east);
\draw [->,dotted] (E'.south west) -- (H'.north east);
\draw [->] (H'.south west) -- (F'.south east);
\draw [->] (H'.north west) -- (F'.north east);

\node (b) at (6,0) [circle]{$+$};

\node (F'') at ( 7,0) [label=left:$w_r$,circle,draw,fill] {};
\node (A'') at ( -0.31+9,-0.95) [label=below left:$w_1$,circle,draw,fill] {};
\node (B'') at ( 0.81+9,-0.59) [label=below right:$w_2$,circle,draw,fill] {};
\node (D'') at ( 0.81+9,0.59) [label=above right:$w_3$,circle,draw,fill] {};
\node (E'') at ( -0.31+9,0.95) [label=above left:$w_4$,circle,draw,fill] {};
\node (H'') at (-1+9,0) [label=above left:$w_0$,star,draw]{};
\draw [->] (A''.north) -- (E''.south);
\draw [->] (B''.north east) -- (D''.south east);
\draw [->] (D''.south west) -- (B''.north west);
\draw [->,dotted] (E''.south west) -- (H''.north east);
\draw [->] (H''.south east) -- (A''.north west);
\draw [->] (H''.south west) -- (F''.south east);
\draw [->] (H''.north west) -- (F''.north east);

\end{tikzpicture}
\end{center}
By induction hypothesis, $B_{1,\ldots,r}H \equiv -B_{1,3,2,4,\ldots,r}H \mod t_2(\ker(t'))$. 

Now, using the same argument as in Step 1, let $P=(w_0,\ldots,w_{r-1})$, $P'=(w_0,w_2,\ldots,w_r)$, and we see that $t_2(PH-P'H)=(B_{1,\ldots,r}H-B_{2,\ldots,r,1}H,0)$, so that $B_{1,\ldots,r}H \equiv B_{2,\ldots,r,1}H \mod t_2(\ker t')$. 

%argument from step 1, with pentagons
\begin{center}
\begin{tikzpicture}[>=stealth',semithick,inner sep=0.4mm]
\tikzstyle{every label}=[font=\footnotesize]

\node (F) at ( 0,0) [label=right:$w_0$,star,draw] {};
\node (A) at ( -0.31,-0.95) [label=below left:$w_1$,circle,draw,fill] {};
\node (B) at ( 0.81,-0.59) [label=below right:$w_2$,circle,draw,fill] {};
\node (D) at ( 0.81,0.59) [circle,draw,fill] {};
\node (E) at ( -0.31,0.95) [label=above:$w_{r-1}$,circle,draw,fill] {};
\node (H) at (-1,0) [label=left:$w_r$,circle,draw,fill]{};
\draw [->] (A.east) -- (B.west);
\draw [->] (B.north) -- (D.south);
\draw [->,dotted] (D.west) -- (E.east);
\draw [->] (E.south) -- (F.north);
\draw [->] (F.south) -- (A.north);

\node (a) at (1.5,0) [circle] {$-$};

\node (F') at ( 3,0) [label=above right:$w_0$,star,draw] {};
\node (A') at ( -0.31+3,-0.95) [label=below left:$w_1$,circle,draw,fill] {};
\node (B') at ( 0.81+3,-0.59) [label=below right:$w_2$,circle,draw,fill] {};
\node (D') at ( 0.81+3,0.59) [circle,draw,fill] {};
\node (E') at ( -0.31+3,0.95) [label=above:$w_{r-1}$,circle,draw,fill] {};
\node (H') at (-1+3,0) [label=below:$w_r$,circle,draw,fill]{};
\draw [->] (B'.north) -- (D'.south);
\draw [->,dotted] (D'.west) -- (E'.east);
\draw [->] (E'.south west) -- (H'.north east);
\draw [->] (H'.east) -- (F'.west);
\draw [->] (F'.south east) -- (B'.north west);

\node (b) at (4.5,0) [circle] {$\mapsto$};

\node (F'') at ( 6.5,0) [label=right:$w_0$,star,draw] {};
\node (A'') at ( -0.31+6.5,-0.95) [label=below left:$w_1$,circle,draw,fill] {};
\node (B'') at ( 0.81+6.5,-0.59) [label=below right:$w_2$,circle,draw,fill] {};
\node (D'') at ( 0.81+6.5,0.59) [circle,draw,fill] {};
\node (E'') at ( -0.31+6.5,0.95) [label=above:$w_{r-1}$,circle,draw,fill] {};
\node (H'') at (-1+6.5,0) [label=left:$w_r$,circle,draw,fill]{};
\draw [->] (A''.east) -- (B''.west);
\draw [->] (B''.north) -- (D''.south);
\draw [->,dotted] (D''.west) -- (E''.east);
\draw [->] (E''.south) -- (F''.north);
\draw [->] (F''.south) -- (A''.north);
\draw [->] (F''.north west) -- (H''.north east);
\draw [->] (F''.south west) -- (H''.south east);

\node (c) at (8,0) [circle] {$-$};

\node (F''') at ( 9.5,0) [label=above right:$w_0$,star,draw] {};
\node (A''') at ( -0.31+9.5,-0.95) [label=below:$w_1$,circle,draw,fill] {};
\node (B''') at ( 0.81+9.5,-0.59) [label=below right:$w_2$,circle,draw,fill] {};
\node (D''') at ( 0.81+9.5,0.59) [circle,draw,fill] {};
\node (E''') at ( -0.31+9.5,0.95) [label=above:$w_{r-1}$,circle,draw,fill] {};
\node (H''') at (-1+9.5,0) [label=below:$w_r$,circle,draw,fill]{};
\draw [->] (B'''.north) -- (D'''.south);
\draw [->,dotted] (D'''.west) -- (E'''.east);
\draw [->] (E'''.south west) -- (H'''.north east);
\draw [->] (H'''.east) -- (F'''.west);
\draw [->] (F'''.south east) -- (B'''.north west);
\draw [->] (F'''.south west) -- (A'''.north west);
\draw [->] (F'''.south east) -- (A'''.north east);

\node (d) at (11,0) [circle] {$\oplus$};
\node (e) at (11.5,0) [circle]{$0$};

\end{tikzpicture}
\end{center}

We combine all the equivalences above to obtain $$B_{1,\ldots,r}H \equiv B_{2,1,3,\ldots,r}H \equiv -B_{1,2,3,\ldots,r}H\mod t_2(\ker t'),$$ and then $(J,0)\in \im(t_2)$.
\end{step}

\begin{step}
Now let $H \in \mathfrak{F}'_{(0,d_1,\ldots,d_n)}$ be any graph. Then $(J,H)\in \im(t_2)$ for some $J$ containing a subgraph $B$ of the form (\ref{balloon}) from Step 3. Indeed, since $H$ does not admit a $2$-coloring, by Remark \ref{odd cycle or 2-color} it must contain an odd cycle, say $C=(w_1,\ldots,w_r)$ is a subgraph of $H$, $H=CP$ for some $P$. But then $(B_{1,\ldots,r}P,H)=t_2(C'P)$, where $C'$ is the cycle $(w_0,w_1,\ldots,w_{r-1})$. 

%Last argument in Step 4
\begin{center}
\begin{tikzpicture}[>=stealth',semithick,inner sep=0.4mm]
\tikzstyle{every label}=[font=\footnotesize]

\node (F) at ( 0,0) [label=right:$w_0$,star,draw] {};
\node (A) at ( -0.31,-0.95) [label=below left:$w_1$,circle,draw,fill] {};
\node (B) at ( 0.81,-0.59) [label=below right:$w_2$,circle,draw,fill] {};
\node (D) at ( 0.81,0.59) [circle,draw,fill] {};
\node (E) at ( -0.31,0.95) [label=above:$w_{r-1}$,circle,draw,fill] {};
\node (H) at (-1,0) [label=left:$w_r$,circle,draw,fill]{};
\draw [->] (A.east) -- (B.west);
\draw [->] (B.north) -- (D.south);
\draw [->,dotted] (D.west) -- (E.east);
\draw [->] (E.south) -- (F.north);
\draw [->] (F.south) -- (A.north);

\node (a) at (1.5,0) [circle] {$\mapsto$};

\node (F') at ( 3.5,0) [label=right:$w_0$,star,draw] {};
\node (A') at ( -0.31+3.5,-0.95) [label=below left:$w_1$,circle,draw,fill] {};
\node (B') at ( 0.81+3.5,-0.59) [label=below right:$w_2$,circle,draw,fill] {};
\node (D') at ( 0.81+3.5,0.59) [circle,draw,fill] {};
\node (E') at ( -0.31+3.5,0.95) [label=above:$w_{r-1}$,circle,draw,fill] {};
\node (H') at (-1+3.5,0) [label=left:$w_r$,circle,draw,fill]{};
\draw [->] (A'.east) -- (B'.west);
\draw [->] (B'.north) -- (D'.south);
\draw [->,dotted] (D'.west) -- (E'.east);
\draw [->] (E'.south) -- (F'.north);
\draw [->] (F'.south) -- (A'.north);
\draw [->] (F'.north west) -- (H'.north east);
\draw [->] (F'.south west) -- (H'.south east);

\node (c) at (5,0) [circle] {$\oplus$};

\node (F'') at ( 7,0) [label=above right:$w_0$,star,draw] {};
\node (A'') at ( -0.31+7,-0.95) [label=below:$w_1$,circle,draw,fill] {};
\node (B'') at ( 0.81+7,-0.59) [label=below right:$w_2$,circle,draw,fill] {};
\node (D'') at ( 0.81+7,0.59) [circle,draw,fill] {};
\node (E'') at ( -0.31+7,0.95) [label=above:$w_{r-1}$,circle,draw,fill] {};
\node (H'') at (-1+7,0) [label=left:$w_r$,circle,draw,fill]{};
\draw [->] (B''.north) -- (D''.south);
\draw [->,dotted] (D''.west) -- (E''.east);
\draw [->] (E''.south west) -- (H''.north east);
\draw [->] (H''.south east) -- (A''.north west);
\draw [->] (A''.east) -- (B''.west);

\end{tikzpicture}
\end{center}
\end{step}
The graph $J=B_{1,\ldots,r}P$ is in $t_2(\ker t')$ by Step 3. Finally, since both $(J,H)$ and $(J,0) \in \im(t_2)$, we obtain $(0,H)\in \im(t_2)$, so this concludes the proof.
\end{proof}

\subsection{Main result}

\begin{comment}
\begin{theorem} \label{main theorem}
Suppose $Y=(\mathbb{P}^1)^n \git_\mathcal{L} PGL_2$, where the polarization $\mathcal{L}=\mathcal{O}_X(d_1,\ldots,d_n)$ has no strictly semi-stable locus. Then $Y$ satisfies Bott vanishing.
\end{theorem}
\end{comment}

Now we can prove the main result.

\begin{proof}[Proof of Theorem \ref{main theorem}]
Note that in our case $PGL_2$ acts freely on $X^{ss}=X^s$, so the GIT quotient $Y$ is indeed a smooth projective variety. If $X^{us}$ has codimension $1$, then we are done by Lemma \ref{is toric} and the fact that every smooth projective toric variety satisfies Bott vanishing (see the references given in \S\ref{introduction section} or Theorem \ref{thm bott for toric}). Otherwise, by Lemma \ref{L'=L} it suffices to show vanishing for $\Omega^j_Y \otimes L$, where $L$ is the descent of the polarization $\mathcal{L}$. If $j=0$, then $H^i(Y,L)=H^i(X,\mathcal{L})^G$ which is certainly $0$ for $i>0$. Assume $j\geq 1$.

From Corollary \ref{can use teleman}, $H^i(Y,\Omega_Y^j \otimes \mathcal{L})=H^i(\mathfrak{X},\Lambda^j L_{\mathfrak{X}} \otimes \mathcal{L})$. This is zero for $i\neq 0,j$ by Corollary \ref{almost bott}. By Lemma \ref{F computes hypercohomology}, we need to show $\mathcal{H}^j(F^\cdot)^G=0$, where $F^\cdot$ is given by (\ref{koszul F}). That is, we need to show that the map
\begin{align*}
(H^0(X,\Omega_X \otimes \mathcal{L}) \otimes S^{j-1}\mathfrak{g}^\vee)^G \overset{t_j}\longrightarrow (H^0(X,\mathcal{L})\otimes S^j\mathfrak{g}^\vee)^G
\end{align*}
is surjective for every $j$. Propositions \ref{graphs for H^1} and \ref{graphs for H^2} show this is true for $j=1$ and $j=2$. Now we do induction on $j$.
Let $j \geq 3$. Consider the short exact sequence from (\ref{tautological on P2}), giving rise to the splitting $S^m \mathfrak{g}^\vee=V_{2m}\oplus S^{m-2}\mathfrak{g}^\vee$ for $m \geq 2$. We use (\ref{tautological on P2}) for $m=j$ and $m=j-1$. Take its pullback to $X\times \mathbb{P}(\mathfrak{g})$ and tensor with the pullbacks of $\Omega_X \otimes \mathcal{L}$ and $\mathcal{L}$, respectively. Then we have a commutative diagram
\[
\begin{tikzcd}[cramped,sep=small]
0 \arrow{r} &(\Omega_X \otimes \mathcal{L})\boxtimes \mathcal{O}_{\mathbb{P}(\mathfrak{g})}(j-3) \arrow{r}\arrow{d} & (\Omega_X \otimes \mathcal{L})\boxtimes \mathcal{O}_{\mathbb{P}(\mathfrak{g})}(j-1) \arrow{r}\arrow{d} &(\Omega_X \otimes \mathcal{L})\boxtimes \mathcal{O}_{\mathbb{P}^1}(2j-2) \arrow{r} &0\phantom{.} \\
0 \arrow{r} &\mathcal{L}\boxtimes \mathcal{O}_{\mathbb{P}(\mathfrak{g})}(j-2) \arrow{r} & \mathcal{L}\boxtimes \mathcal{O}_{\mathbb{P}(\mathfrak{g})}(j) \arrow{r} &\mathcal{L}\boxtimes \mathcal{O}_{\mathbb{P}^1}(2j) \arrow{r} &0.
\end{tikzcd}
\]

The vertical maps are given by the section $s \in H^0(X \times \mathbb{P}(\mathfrak{g}),T_X \boxtimes \mathfrak{g}^\vee)$ defining the map $\Omega_X \rightarrow \mathfrak{g}^\vee$, as usual. Taking global sections we see that the map $H^0(X,\Omega_X \otimes \mathcal{L}) \otimes S^{j-1}\mathfrak{g}^\vee \overset{d_j} \longrightarrow H^0(X,\mathcal{L})\otimes S^j\mathfrak{g}^\vee$ splits as
\[
\begin{tikzcd}[cramped,sep=scriptsize]
H^0(X,\Omega_X \otimes \mathcal{L})\otimes V_{2(j-1)} \arrow{d}{t}\arrow{rrd}{t'} &\oplus& H^0(X,\Omega_X \otimes \mathcal{L})\otimes S^{j-3}\mathfrak{g}^\vee \arrow{d}{t_{j-2}} \\
H^0(X,\mathcal{L})\otimes V_{2j} &\oplus& H^0(X,\mathcal{L})\otimes S^{j-2}\mathfrak{g}^\vee.
\end{tikzcd}
\]

By induction hypothesis, the restriction $t_{j-2}^G$ of $t_{j-2}$ to invariant sections is surjective, while the restriction $t^G$ is surjective by Corollary \ref{exactness on V2j}. As a consequence, $t_j^G=(t^G,t'^G+t_{j-2}^G)$ is surjective. This completes the proof.
\end{proof}

\section{A note on Fano varieties}

As mentioned in the Introduction, (non-toric) Fano varieties satisfying Bott vanishing are particularly interesting. We have $T_Y=\Omega_Y^{n-1}\otimes K_Y^{\vee}$, so if $K_Y^\vee$ is ample and $Y$ satisfies Bott vanishing, then $H^1(Y,T_Y)$ must be zero, and $Y$ must be rigid. In particular, Bott vanishing holds for at most finitely many smooth complex Fano varieties in each dimension.

If $Y=(\mathbb{P}^1)^n\git PGL_2$ is as in Theorem \ref{main theorem} and it is non-toric, then $\Pic Y$ is the $G$-ample cone of $(\mathbb{P}^1)^n$ (see the proof of Lemma \ref{L'=L}) and $K_Y$ is the descent of $K_X=\mathcal{O}_X(-2,\ldots,-2)$. If $Y$ is Fano, then by Lemma \ref{L'=L} it has to be the quotient $(\mathbb{P}^1)^n\git_{\mathcal{O}_X(2,\ldots,2)}PGL_2$. Observe that $\mathcal{O}_X(2,\ldots,2)$ has no strictly semi-stable locus if and only if $n$ is odd. In other words, Theorem \ref{main theorem} provides us with exactly one non-toric example of a Fano variety satisfying Bott vanishing in each even dimension. In the case of dimension $2$, this was the quintic del Pezzo surface.

An interesting non-example comes from a Fano threefold that contains the quintic del Pezzo surface as a hyperplane section. Let $M$ be the Fano threefold over $\mathbb{C}$ of index $2$ and degree $5$, with Picard number $1$. The canonical line bundle is $K_M=\mathcal{O}_M(-2)$, where $\mathcal{O}_M(1)$ is the ample generator of the Picard group. $M$ is a rigid Fano threefold, isomorphic to a linear section of the Grassmannian $\Gr(2,5)\subset \mathbb{P}^9$ by a subspace $\mathbb{P}^6\subset\mathbb{P}^9$. The quintic del Pezzo surface $V$ can be realized as a divisor in the linear system $|\mathcal{O}_M(1)|$. It can be computed that the Hodge numbers of $M$ are $h^{0,0}(M)=h^{1,1}(M)=1$ and zero otherwise, in particular $h^{1,2}(M)=0$. The description of $M$ can be found in \cite[\S 5.1]{kuznetsov-fano3} or \cite[\S 4]{mukai-fano3}.

This variety $M$ does not satisfy Bott vanishing. Indeed, we claim that $H^1(M,\Omega_M^2(1))$ has dimension at least $3$. Observe that by Serre duality, this is the same as saying that $H^2(M,\Omega_M(-1))$ has dimension $\geq 3$. To show this, we follow a strategy similar to \cite[Lemma 1.2]{jahnke-fano3}, using the dualized tangent sequence
\begin{align}\label{seq on V}
0\rightarrow \mathcal{O}_V(-1)\rightarrow\Omega_M|_V\rightarrow\Omega_V\rightarrow 0
\end{align}
and the ideal sequence tensored with $\Omega_M$
\begin{align}\label{seq on M}
0\rightarrow\Omega_M(-1)\rightarrow\Omega_M\rightarrow\Omega_M|_V\rightarrow 0.
\end{align}

By the Kodaira-Akizuki-Nakano vanishing theorem \cite[Theorem 4.2.3]{positivity1}, we have the vanishing $H^1(M,\Omega_M(-1))=0$. Using the fact that $h^{0,0}(M)=h^{1,1}(M)=1$, $h^{1,2}(M)=0$ and sequence (\ref{seq on M}), we get an exact sequence
\begin{align*}
0\rightarrow H^1(M,\Omega_M)\rightarrow H^1(V,\Omega_M|_V)\rightarrow H^2(M,\Omega_M(-1))\rightarrow 0,
\end{align*}
so $h^2(\Omega_M(-1))=h^1(\Omega_M|_V)-1$. It suffices to check that $h^1(\Omega_M|_V)\geq4$.

Now take sequence $(\ref{seq on V})$ and observe $\mathcal{O}_V(-1)=K_V$ by adjunction. Since $h^{2,1}(V)=h^{1,2}(V)=0$, we get
\begin{align*}
0\rightarrow H^1(V,\Omega_M|_V)\rightarrow H^1(V,\Omega_V)\rightarrow H^2(V,K_V)\rightarrow H^2(V,\Omega_M|_V)\rightarrow 0.
\end{align*}
From the Hodge numbers of $V$, $h^{1,1}(V)=10-5=5$, $h^{2,2}(V)=1$, we see
\begin{align*}
h^2(\Omega_M|_V)+5=h^1(\Omega_M|_V)+1.
\end{align*}
In particular, $h^1(\Omega_M|_V)\geq 4$, proving the claim and the fact that $M$ does not satisfy Bott vanishing.

%\begin{thebibliography}
%\bibliographystyle{amsalpha}
%\bibliography{biblio}
%\end{thebibliography}

\begin{bibdiv}
\begin{biblist}
\bibselect{mylist}
\end{biblist}
\end{bibdiv}

\end{document}